\crefname{equation}{}{}
\newtheorem{theorem}{Theorem}[section]
\newtheorem{proposition}[theorem]{Proposition}
\newtheorem{lemma}[theorem]{Lemma}
\newtheorem{hypothesis}[theorem]{Hypothesis}
\newtheorem{corollary}[theorem]{Corollary}
\Crefname{problem}{Problem}{Problems}
\newtheorem*{question*}{Question}
\Crefname{question}{Question}{Questions}
\Crefname{knownlemma}{Lemma}{Lemmas}
\Crefname{knowntheorem}{Theorem}{Theorems}
\newtheorem{definition}[theorem]{Definition}
\Crefname{notation}{Notation}{Notations}
\newtheorem*{notation*}{Notation}
\newtheorem*{example*}{Example}
\theoremstyle{remark}
\newtheorem*{remark}{Remark}
\numberwithin{equation}{section}
\newcommand{\ka}{\mathfrak{a}}
\newcommand{\kc}{\mathfrak{c}}
\renewcommand{\d}{\mathfrak{d}}
\newcommand{\kd}{\mathfrak{d}}
\newcommand{\p}{\mathfrak{p}}
\newcommand{\kp}{\mathfrak{p}}
\newcommand{\m}{\mathfrak{m}}
\newcommand{\km}{\mathfrak{m}}
\newcommand{\n}{\mathfrak{n}}
\newcommand{\kn}{\mathfrak{n}}
\newcommand{\q}{\mathfrak{q}}
\newcommand{\kq}{\mathfrak{q}}
\newcommand{\F}{\mathfrak{F}}
\newcommand{\N}{\textnormal{N}}
\newcommand{\Z}{\mathbb{Z}}
\newcommand{\R}{\mathbb{R}}
\newcommand{\Q}{\mathbb{Q}}
\newcommand{\A}{\mathbb{A}}
\newcommand{\mD}{\mathcal{D}}
\newcommand{\mJ}{\mathcal{J}}
\newcommand{\mL}{\mathcal{L}}
\newcommand{\mO}{\mathcal{O}}
\newcommand{\mS}{\mathcal{S}}
\newcommand{\one}{\mathbbm{1}}
\renewcommand{\Re}{\textnormal{Re}}
\newcommand{\GL}{\textnormal{GL}}
\newcommand{\eps}{\varepsilon}
\renewcommand{\bar}{\overline}
\renewcommand{\hat}{\widehat}
\renewcommand{\tilde}{\widetilde}
\renewcommand{\pmod}[1]{\ (\textnormal{mod } #1)}
\newcommand{\lcm}{\textnormal{lcm}}
\newcommand{\Res}{\textnormal{Res}}
\definecolor{myBlue}{rgb}{0, 0, 0.6}
\newcommand{\re}{\mathop{\mathrm{Re}}}
\renewcommand{\Re}{\mathop{\mathrm{Re}}}
\renewcommand{\epsilon}{\varepsilon}
\title{Large sieves for $\mathrm{GL}_n$ and applications}
\author{Alexandru Pascadi}
\address{Mathematical Institute, Radcliffe Observatory quarter, Woodstock Road, Oxford OX2 6GG, England}
\email{alexpascadi@gmail.com}
\author{Jesse Thorner}
\address{Department of Mathematics, University of Illinois Urbana-Champaign, 1409 West
Green Street, Urbana, IL 61801, USA}
\email{jesse.thorner@gmail.com}
\begin{document}

\begin{abstract}
Let $\mathfrak{F}_n$ be the set of unitary cuspidal automorphic representations of $\mathrm{GL}_n$ over a number field $F$, and let $\mathcal{S}\subseteq\mathfrak{F}_n$ be an arbitrary finite subset. Given $\pi_0\in\mathfrak{F}_{n_0}$, we establish large sieve inequalities for the families $\{L(s,\pi)\colon \pi\in\mathcal{S}\}$ and $\{L(s,\pi\times\pi_0)\colon \pi\in\mathcal{S}\}$ that, unlike previous results, are independent of progress towards the generalized Ramanujan conjecture, and simultaneously handle the Dirichlet coefficients of $L$, $L^{-1}$, and $\log L$. We also give the first such result that improves upon the trivial bound for short sums.  We present several applications, including:
\begin{enumerate}[leftmargin=*]
    \item the strongest bound for $\sum_{\pi\in\mathcal{S}}|L(\frac{1}{2},\pi)|^2$ that holds for arbitrary $\mathcal{S}$,
    \item significant improvements to zero density estimates for families of automorphic and Rankin--Selberg $L$-functions, counting violations to the generalized Riemann hypothesis near $\re(s)=1$,
    \item the removal of all unproven hypotheses in the conditional log-free zero density estimate for families of Rankin--Selberg $L$-functions proved by Brumley, Thorner, and Zaman, and
    \item an improvement of the density theorem for non-archimedean Langlands parameters due to Lichtman and Pascadi, counting violations to the generalized Ramanujan conjecture.
\end{enumerate}
\end{abstract}

\maketitle

\vspace{-0.3cm}
\section{Introduction and statement of main results}
\label{sec:intro}

\subsection{Large sieve inequalities}

The classical multiplicative large sieve inequality \cite[Theorem 7.13]{IK} states that
\begin{equation} \label{eqn:large-sieve-dirichlet}
    \sup_{\|a\|_2 = 1} \sum_{q \le Q}~\sideset{}{^*}\sum_{\chi \pmod{q}} \Bigg| \sum_{n\leq N} a_n \chi(n) \Bigg|^2 \leq N + Q^2 - 1,
\end{equation}
where $\chi$ varies among the primitive Dirichlet characters modulo $q$, and $(a_n)_{n=1}^{N}$ varies among complex sequences with $\|a\|_2 := (\sum_{n\leq N} |a_n|^2)^{1/2} = 1$. This result frequently makes decisive appearances in analytic number theory.  It is crucial in the proof of the Bombieri--Vinogradov theorem, zero density estimates, and bounds for moments of Dirichlet $L$-functions. It encodes the ``quasi-orthogonality'' of Dirichlet characters of varying moduli $q \le Q$, and it is optimal.

The bound \eqref{eqn:large-sieve-dirichlet} turns out to be a particular ($\GL_1$) case of a more general ($\GL_n$) phenomenon, which requires some setup. Let $F$ be a number field with ring of integers $\mathcal{O}_F$, absolute norm $\N = \N_{F/\Q}$, and absolute discriminant $D_F$. Throughout, let $\kp$ (resp.\ $\kn$, $\kd$, and $\kq$) denote nonzero prime ideals (resp.\ nonzero ideals) of $\mathcal{O}_F$. Let $\F_n$ denote the family of all cuspidal automorphic representations of $\GL_n(\A_F)$, normalized to have unitary central characters which are trivial on the diagonally-embedded positive reals. For $n, n_0 \in \mathbb{N}$, $\pi \in \F_n$, and $\pi_0 \in \F_{n_0}$, we recall the standard \cite{bump1998automorphic,goldfeld2011automorphic} and Rankin--Selberg $L$-functions \cite{JPSS,moeglin1989spectre,shahidi1981certain} given in $\Re(s) > 1$ by
\[
    L(s, \pi) = \sum_\n \frac{\lambda_\pi(\n)}{\N \n^{s}}, 
    \qquad\quad 
    L(s, \pi \times \tilde{\pi}_0) = \sum_\n \frac{\lambda_{\pi \times \tilde{\pi}_0}(\n)}{\N \n^{s}}.
\]
We denote by $\kq_{\pi}$ and $\mathfrak{C}_{\pi}$ the \emph{arithmetic} and \emph{analytic} conductors of $\pi$.  Here, $\mathfrak{C}_{\pi}$ follows Iwaniec and Sarnak \cite{IS}, and equals $\N \kq_{\pi}$ multiplied by a contribution from the spectral parameters (see \eqref{eqn:analytic_conductor_def}). Our normalization of the central characters ensures that for $Q\geq 1$, the truncated universal family
\[
\mathfrak{F}_n(Q):=\{\pi\in\mathfrak{F}_n\colon \mathfrak{C}_{\pi}\leq Q\}
\]
is finite \cite[Corollary 9]{Brumley}. In fact, it is expected that $|\mathfrak{F}_n(Q)|\asymp_{F,n}Q^{n+1}$. Brumley and Mili{\'c}evi{\'c} \cite{BM} proved that there exists a constant $\Cl[abcon]{BM}=\Cr{BM}(n,F)>0$ such that
\begin{equation}
\label{eqn:BM_asymptotic}
\#\{\pi\in\mathfrak{F}_n(Q)\colon \textup{$\pi_{\infty}$ spherical}\}=\Cr{BM} Q^{n+1}\Big(1+O_{n,F}\Big(\frac{1}{\log Q}\Big)\Big),
\end{equation}
which implies the sharp lower bound
\begin{equation}
\label{eqn:BM}
    |\mathfrak{F}_n(Q)|\gg_{F,n}Q^{n+1}.
\end{equation}

Now, let $\mathcal{S}\subseteq\mathfrak{F}_n$ be a finite set and $a(\kn)$ be a complex-valued function.  Given $N\geq 1$, we denote\footnote{Given orderings $(\pi_1,\pi_2,\ldots,\pi_i,\ldots)$ of $\mathcal{S}$ with monotonically increasing analytic conductor and $(\kn_1,\kn_2,\ldots,\kn_j,\ldots)$ of the nonzero ideals of $\mathcal{O}_F$ with monotonically increasing norm lying in $[1,x]$, let $A$ be the matrix $[\lambda_{\pi_i}(\kn_j)]$.  The quantity $C(N,\mathcal{S})$ equals the largest eigenvalue of the self-adjoint matrix $\overline{A}^t A$.}
\[
Q := \max_{\pi\in\mathcal{S}}\mathfrak{C}_{\pi}, 
\qquad 
\|a\|_2 := \Bigg(\sum_{\N\kn\leq N}|a(\kn)|^2\Bigg)^{1/2},\qquad C(N,\mathcal{S}):=\sup_{\|a\|_2=1}\sum_{\pi\in\mathcal{S}}\Bigg|\sum_{\N\kn\leq N}\lambda_{\pi}(\kn)a(\kn)\Bigg|^2.
\]
With this notation, the classical large sieve inequality from \eqref{eqn:large-sieve-dirichlet} reads (with $F = \Q$)
\begin{equation}
\label{eqn:GL1_large_sieve}
C(N,\mathfrak{F}_1(Q))\ll N+|\mathfrak{F}_1(Q)|.
\end{equation}
In general, the Cauchy--Schwarz inequality yields $C(N,\mathcal{S})\ll_{n,[F:\Q]} (NQ)^{o(1)}N|\mathcal{S}|$. A $\GL_n$ large sieve inequality is any improvement over this, possibly when $N$ is large with respect to $|\mathcal{S}|$ and $Q$, or vice versa. Up to factors of size $(NQ)^{o(1)}$, the best possible bound is
\begin{equation}
\label{eqn:large_sieve_optimal}
C(N,\mathcal{S})\ll_{n,F}(NQ)^{o(1)}(N+|\mathcal{S}|),
\end{equation}
which can be viewed as a ``quasi-orthogonality'' result for the $\pi\in\mathcal{S}$. There are some choices of $\mathcal{S}$ for which \eqref{eqn:large_sieve_optimal} is provably not attainable \cite{DunnRadziwill,IwaniecLi}. When $\mS = \mathfrak{F}_n(Q)$, establishing \eqref{eqn:large_sieve_optimal} remains a very difficult open problem for all $n \ge 2$.

Assuming the generalized Ramanujan conjecture (GRC) for all $\pi\in\mathcal{S}$, the first general bound on $C(N,\mathcal{S})$ is implicit in the work of Duke and Kowalski \cite[Section 4]{DK}, namely
\begin{equation} \label{eqn:DK_large_sieve_conj}
C(N,\mathcal{S})\ll_{n,[F:\Q]}(NQ)^{o(1)}(N+\sqrt{N}Q^{\frac{n}{2}}|\mathcal{S}|).
\end{equation}
Let
\[
\theta\in\Big[0,\frac{1}{2}-\frac{1}{n^2+1}\Big]
\]
be the best exponent towards GRC that holds for all $\pi\in\mathcal{S}$, so that $\theta=0$ if and only if each $\pi\in\mathcal{S}$ satisfies GRC. The first unconditional result for all $\mS$ follows from the work of Thorner and Zaman \cite{TZ_GLn}.  For a variant $C^{\mathrm{unram}}(N, \mS)$ of $C(N, \mS)$ which incorporates the coprimalty constraint $\gcd(\kn, \kq_{\pi}) = \mathcal{O}_F$ in the inner sum, they established the bound\footnote{This incorporates the correction from \cite[Section 4]{humphries2024zeros}.}
\begin{equation}
\label{eqn:TZ_large_sieve}
C^{\mathrm{unram}}(N, \mS) \ll_{n,[F:\Q]} (NQ)^{o(1)}(N+Q^{4\theta n^2+n}|\mathcal{S}|).
\end{equation}
The exponent $4\theta n^2$ arises from addressing the $\kn$ such that $\gcd(\kn,\kq_{\pi})\neq\mathcal{O}_F$. Recent work of Jiang \cite{Jiang} can also be used to establish the unconditional large sieve inequality
\begin{equation} \label{eqn:Jiang_large_sieve}
C(N,\mathcal{S})\ll_{n,[F:\Q]} (NQ)^{o(1)}(N+N^{\frac{1}{2}+\theta}Q^{n(\frac{1}{2}-\theta)}|\mathcal{S}|),
\end{equation}
which recovers the conditional bound from \eqref{eqn:DK_large_sieve_conj} under GRC.

Finally, a version of \eqref{eqn:TZ_large_sieve} also holds for the Dirichlet coefficients of Rankin--Selberg $L$-functions.  To state such results, we let $\pi_0\in\mathfrak{F}_{n_0}$ and write
\begin{equation*}
\begin{gathered}
\|a\|_{2,\pi_0}:=\Bigg(\sum_{\N\kn\leq N}\lambda_{\pi_0\times\widetilde{\pi}_0}(\kn)|a(\kn)|^2\Bigg)^{1/2}, \qquad 
C(N,\mathcal{S},\pi_0):=\sup_{\|a\|_{2,\pi_0}=1}\sum_{\pi\in\mathcal{S}}\Bigg|\sum_{\N\kn\leq N}\lambda_{\pi\times\pi_0}(\kn)a(\kn)\Bigg|^2.
\end{gathered}
\end{equation*}
Note that if $\mathbbm{1}\in\mathfrak{F}_1$ is the trivial representation (whose $L$-function is the Dedekind zeta function $\zeta_F(s)$), then $C(N,\mathcal{S},\mathbbm{1})=C(N,\mathcal{S})$. Extending the ideas in \cite{TZ_GLn}, Humphries and Thorner \cite{humphries2024zeros} unconditionally proved that for a suitable variant $C^{\mathrm{unram}}(N, \mS, \pi_0)$ of $C(N, \mS, \pi_0)$ which includes the constraint $\gcd(\kn,\kq_{\pi}\kq_{\pi_0})=\mathcal{O}_F$ in the inner sum, one has
\begin{equation}
\label{eqn:HT_large_sieve}
C^{\mathrm{unram}}(N, \mS, \pi_0) \ll_{n,[F:\Q]} (NQ)^{o(1)}(N+Q^{4\theta n^2+n}|\mathcal{S}|).
\end{equation}

If each $\pi\in\mathcal{S}$ has trivial conductor, or each $\pi\in\mathcal{S}$ satisfies GRC, then the $4\theta n^2$ in \eqref{eqn:TZ_large_sieve} and \eqref{eqn:HT_large_sieve} can be replaced with zero. Without hybrid-aspect subconvexity bounds for the $L$-functions in the family $\{L(s,\pi\times\widetilde{\pi}')\colon \pi,\pi'\in\mathcal{S}\}$ or a suitable trace formula\footnote{Large sieve inequalities based on trace formul{\ae} require that all $\pi \in \mS$ are automorphic under the same group. Aside from the proof of \eqref{eqn:large-sieve-dirichlet}, duality seems to be the only viable approach to allowing the group to vary.} for the family $\mathcal{S}$ (see \cite{blomer2023density,BlomerThorner,deshouillers1982kloosterman,jana2020applications}, for example), this seems to be the limit of the current methods.

In this paper, we build on the ideas of Lichtman and Pascadi \cite{lichtman2024density} and Thorner and Zaman \cite{TZ_GLn} to develop a new approach to large sieve inequalities that handles the contribution from ramified prime ideals more efficiently, leading to the {\it unconditional} elimination of the $Q^{4\theta n^2}$ term and the coprimality restrictions in \eqref{eqn:TZ_large_sieve} and \eqref{eqn:HT_large_sieve}.  Our first result is a corollary of our main technical result, \cref{thm:large-sieve-general}, which is also independent of progress towards GRC.

\begin{theorem}
\label{thm:large_sieve2}
If $\mathcal{S}\subseteq \mathfrak{F}_n$ is finite, $\pi_0\in\mathfrak{F}_{n_0}$, $N\geq 1$, and $Q=\max_{\pi\in\mathcal{S}}\mathfrak{C}_{\pi}$, then
\[
\begin{rcases*}
C(N,\mathcal{S})\\
C(N,\mathcal{S},\pi_0)
\end{rcases*}\ll_{n,[F:\Q]} (NQ)^{o(1)}(N+Q^n|\mathcal{S}|).
\]
The factor of $Q^n$ in the right-hand side can be replaced with $\max_{\pi,\pi'\in\mathcal{S}}\mathfrak{C}_{\pi\times\tilde{\pi}'}^{1/2}$.
\end{theorem}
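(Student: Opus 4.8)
The plan is to combine the duality principle with Schur‑product positive semi‑definiteness to replace the ``raw'' quantity $C(N,\mathcal{S})$ by the tail sums of genuine Rankin--Selberg $L$‑functions, and then to estimate those by a Pólya--Vinogradov‑type bound.

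First I would pass to the matrix formulation. By the duality principle, $C(N,\mathcal{S})$ equals the largest eigenvalue of the Hermitian positive semi‑definite Gram matrix $G=\sum_{\N\kn\le N}v_\kn v_\kn^{*}$, where $v_\kn:=(\lambda_\pi(\kn))_{\pi\in\mathcal{S}}\in\mathbb{C}^{|\mathcal{S}|}$, so that $G_{\pi,\pi'}=\sum_{\N\kn\le N}\lambda_\pi(\kn)\overline{\lambda_{\pi'}(\kn)}$. Fix a smooth nonnegative $\phi$, compactly supported in $[0,2]$, with $\phi\ge\mathbbm{1}_{[0,1]}$, and set $\hat\phi(s)=\int_0^\infty\phi(x)x^{s-1}\,dx$. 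The structural input is the Cauchy identity $\prod_{i,j}(1-\alpha_i\overline{\beta_j}x)^{-1}=\sum_\mu s_\mu(\alpha)\overline{s_\mu(\beta)}x^{|\mu|}$ for Schur polynomials: at each unramified prime $\kp$ it gives $(\lambda_{\pi\times\tilde\pi'}(\kp^k))_{\pi,\pi'}=\sum_{|\mu|=k}v^{(\kp)}_\mu (v^{(\kp)}_\mu)^{*}$ with $v^{(\kp)}_\mu=(s_\mu(\alpha_\pi(\kp)))_\pi$, and since $\lambda_\pi(\kp^k)=s_{(k)}(\alpha_\pi(\kp))$ the ``one‑row'' term yields $(\lambda_\pi(\kp^k)\overline{\lambda_{\pi'}(\kp^k)})_{\pi,\pi'}\preceq(\lambda_{\pi\times\tilde\pi'}(\kp^k))_{\pi,\pi'}$. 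By multiplicativity, the Schur product theorem, and Loewner‑monotonicity of Hadamard products of positive semi‑definite matrices, this upgrades to $v_\kn v_\kn^{*}\preceq(\lambda_{\pi\times\tilde\pi'}(\kn))_{\pi,\pi'}$ for each $\kn$ (at primes dividing the conductors this needs extra care, discussed below); as $\phi-\mathbbm{1}_{[0,1]}\ge 0$ and $\phi\ge 0$, summing gives
\[
G\ \preceq\ \tilde G,\qquad \tilde G_{\pi,\pi'}:=\sum_{\kn}\phi\Big(\tfrac{\N\kn}{N}\Big)\lambda_{\pi\times\tilde\pi'}(\kn),
\]
so $C(N,\mathcal{S})=\lambda_{\max}(G)\le\lambda_{\max}(\tilde G)$; here $\tilde G$ is Hermitian (because $\widetilde{\pi'\times\tilde\pi}\cong\pi\times\tilde\pi'$) and positive semi‑definite.

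Next I would estimate $\lambda_{\max}(\tilde G)$ by the Gershgorin/Schur bound $\lambda_{\max}(\tilde G)\le\max_\pi\big(\tilde G_{\pi,\pi}+\sum_{\pi'\ne\pi}|\tilde G_{\pi,\pi'}|\big)$, evaluating entries by Mellin inversion $\tilde G_{\pi,\pi'}=\frac{1}{2\pi i}\int_{(2)}\hat\phi(s)N^{s}L(s,\pi\times\tilde\pi')\,ds$ followed by a contour shift. For $\pi'=\pi$, $L(s,\pi\times\tilde\pi)$ has a simple pole at $s=1$ with residue $r_\pi\ll_{n,F}\mathfrak{C}_\pi^{o(1)}\ll Q^{o(1)}$; shifting to $\Re(s)=\tfrac12$ and using convexity gives $\tilde G_{\pi,\pi}=\hat\phi(1)r_\pi N+O\big(N^{1/2}\mathfrak{C}_{\pi\times\tilde\pi}^{1/4+o(1)}\big)\ll(NQ)^{o(1)}(N+Q^{n})$, since $\mathfrak{C}_{\pi\times\tilde\pi}\ll_n\mathfrak{C}_\pi^{2n}\le Q^{2n}$. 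For $\pi'\ne\pi$ the representations are distinct and cuspidal, so $L(s,\pi\times\tilde\pi')$ is entire; shifting all the way to $\Re(s)=\eps$ and invoking the functional equation together with the convexity bound yields the Pólya--Vinogradov‑type estimate $|\tilde G_{\pi,\pi'}|\ll(NQ)^{o(1)}\mathfrak{C}_{\pi\times\tilde\pi'}^{1/2}\ll(NQ)^{o(1)}Q^{n}$, using $\mathfrak{C}_{\pi\times\tilde\pi'}\ll_n(\mathfrak{C}_\pi\mathfrak{C}_{\pi'})^{n}\le Q^{2n}$. Summing over $\pi'$ and maximizing over $\pi$ gives $C(N,\mathcal{S})\ll_{n,[F:\Q]}(NQ)^{o(1)}(N+Q^{n}|\mathcal{S}|)$.

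For the Rankin--Selberg variant, since $\lambda_{\pi_0\times\tilde\pi_0}(\kn)\ge 0$ and vanishes only where $\lambda_{\pi\times\pi_0}(\kn)$ does, the substitution $a(\kn)\mapsto\sqrt{\lambda_{\pi_0\times\tilde\pi_0}(\kn)}\,a(\kn)$ turns $C(N,\mathcal{S},\pi_0)$ into the largest eigenvalue of the Gram matrix with entries $\lambda_{\pi_0\times\tilde\pi_0}(\kn)^{-1}\lambda_{\pi\times\pi_0}(\kn)\overline{\lambda_{\pi'\times\pi_0}(\kn)}$ summed over $\N\kn\le N$. At a prime power $\kp^k$ this matrix equals $\lambda_{\pi_0\times\tilde\pi_0}(\kp^k)^{-1}\big(\lambda_{\pi\times\pi_0}(\kp^k)\overline{\lambda_{\pi'\times\pi_0}(\kp^k)}\big)_{\pi,\pi'}$, which by Cauchy--Schwarz in the form $(\sum_\mu c_\mu v_\mu)(\sum_\mu c_\mu v_\mu)^{*}\preceq\|c\|_2^{2}\sum_\mu v_\mu v_\mu^{*}$ — applied with $c_\mu=s_\mu(\alpha_{\pi_0}(\kp))$ and $v_\mu=(s_\mu(\alpha_\pi(\kp)))_\pi$ — is $\preceq(\lambda_{\pi\times\tilde\pi'}(\kp^k))_{\pi,\pi'}$. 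The identical Schur‑product argument then majorizes it by $\tilde G$ up to an auxiliary nonnegative multiplicative weight supported on the finitely many primes dividing $\kq_{\pi_0}$ (affecting only the $\pi_0$‑dependence of the implied constant); in particular the analytic conductor of the relevant $L$‑function is unchanged in the $Q$‑aspect, so one concludes exactly as before that the same bound $(NQ)^{o(1)}(N+Q^{n}|\mathcal{S}|)$ holds.

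The main obstacle is the extension of the Loewner domination $v_\kn v_\kn^{*}\preceq(\lambda_{\pi\times\tilde\pi'}(\kn))_{\pi,\pi'}$ to ideals $\kn$ divisible by primes ramified for some $\pi\in\mathcal{S}$: there the Cauchy identity no longer describes the local Rankin--Selberg factor, which is built from the inertia invariants of $\rho_\pi\otimes\rho_{\pi'}^{\vee}$ via local Langlands, and one needs a careful comparison of Euler factors (or a suitable substitute) to retain the inequality. Handling this efficiently — rather than bounding the ramified contribution crudely through Satake parameters, which is exactly what forces the spurious factor $Q^{4\theta n^{2}}$ in \eqref{eqn:TZ_large_sieve} and \eqref{eqn:HT_large_sieve} — is the technical heart of the argument, and is where the general framework of \cref{thm:large-sieve-general} does its work.
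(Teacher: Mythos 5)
Your outline is structurally parallel to the paper's argument (duality, Loewner domination of the Gram matrix by $(\lambda_{\pi\times\tilde\pi'}(\kn))_{\pi,\pi'}$, then Mellin inversion with the residue at $s=1$ on the diagonal and convexity off the diagonal — this is essentially \cref{lem:rs-sums} plus the $z=1$ case of \cref{thm:large-sieve-general}). But there is a genuine gap at the decisive step: you establish $v_\kn v_\kn^{*}\preceq(\lambda_{\pi\times\tilde\pi'}(\kn))_{\pi,\pi'}$ only via the Cauchy identity for Schur polynomials, which describes the local Rankin--Selberg factor solely at primes unramified for both $\pi$ and $\pi'$. At ramified primes you concede the inequality is open and defer to ``the general framework of \cref{thm:large-sieve-general}'' — i.e., to the very result whose proof is at issue. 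That ramified case is not a routine patch; it is exactly what forces the $Q^{4\theta n^2}$ loss and the coprimality restriction in \eqref{eqn:TZ_large_sieve}--\eqref{eqn:HT_large_sieve}, and removing it is the point of the theorem. The paper fills this gap with the positive semi-definite cover formalism of \cref{sec:non-def-covers}: Brumley's explicit description \cite[(A.8)]{soundararajan2019weak} of the local factors at \emph{all} finite places writes $\log L(s,\pi\times\tilde\pi')=\sum_j S_j(\pi)\bar S_j(\pi')\N\kn_j^{-s}$, so $(\log L(s,\pi\times\tilde\pi'))_{\pi,\pi'}$ is positive semi-definite; exponentiation preserves covers (\cref{lem:nondef-l-functions}), so $(\lambda_{\pi\times\tilde\pi'}(\kn))_{\pi,\pi'}$ is positive semi-definite coefficientwise; and adjoining $\mathbbm{1}$ (resp.\ $\pi_0$) to the family and applying the block-matrix/Cauchy--Schwarz argument of \cref{lem:inequality-bilinear-forms} yields \eqref{eqn:key-inequality} and \eqref{eqn:inequality-RS} uniformly at all $\kn$. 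Without this (or an equivalent analysis of the ramified Euler factors), your proof does not close.

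A secondary issue: for $C(N,\mathcal{S},\pi_0)$ you propose to absorb the ramified places of $\pi_0$ into ``the $\pi_0$-dependence of the implied constant.'' The theorem's implied constant may depend on $n_0$ but not on $\pi_0$ itself (uniformity in $\mathfrak{C}_{\pi_0}$ is what makes \cref{thm:ZDE,thm:LFZDE} explicit in $\mathfrak{C}_{\pi_0}$), so this would prove a strictly weaker statement. The paper avoids it by keeping the weight $\lambda_{\pi_0\times\tilde\pi_0}(\kn)$ in the normalization throughout and using the uniform inequality \eqref{eqn:average-RS-bound}; note also that your claim that $\lambda_{\pi_0\times\tilde\pi_0}(\kn)=0$ forces $\lambda_{\pi\times\pi_0}(\kn)=0$ is itself \eqref{eqn:pointwise-RS-bound}, which again requires the ramified-place analysis.
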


\begin{remark}
\cref{thm:large_sieve2} also holds if, in the definition of $C(N, \mS)$, one replaces $\lambda_\pi(\kn)$ with the $\kn$-th Dirichlet coefficient of $L(s, \pi)^{-1}$ or $\log L(s, \pi)$ (and similarly for $C(N,\mathcal{S},\pi_0)$). See \cref{thm:large-sieve-general}.
\end{remark}

All of the $\GL_n$ large sieve inequalities mentioned above, including \cref{thm:large_sieve2} and the conditional bound from \eqref{eqn:DK_large_sieve_conj}, can only beat the trivial bound $C(N, \mS) \ll_{n,[F:\Q]} (NQ)^{o(1)} N|\mS|$ when $N > Q^n$. In this range, \cref{thm:large_sieve2} significantly improves upon or subsumes \eqref{eqn:DK_large_sieve_conj}--\eqref{eqn:HT_large_sieve}; in particular, this unconditionally establishes \eqref{eqn:DK_large_sieve_conj}. Consequently, \cref{thm:large_sieve2} is sharp (i.e., it matches \eqref{eqn:large_sieve_optimal}) in the widest range of $N$, as summarized in the table below.

\begin{center}
\begin{tabular}{c|c|c|c|c}
    \emph{Result:} & \eqref{eqn:Jiang_large_sieve} & \eqref{eqn:DK_large_sieve_conj} (assumes GRC) & \eqref{eqn:TZ_large_sieve} and \eqref{eqn:HT_large_sieve} & \cref{thm:large_sieve2} 
    \\
    \hline
    \emph{Sharp when:}
    & 
    $N \ge Q^n |\mS|^{\frac{2}{1-2\theta}}$
    & 
    $N \ge Q^n |\mS|^2$
    & 
    $N \ge Q^{4\theta n^2+n}|\mS|$
    & 
    $N \ge Q^n |\mS|$
\end{tabular}
\end{center}

If we replace the norm $\|a\|_2$ from the definition of $C(N, \mS)$ with $N^{1/2}\max_{\N\kn\leq N} |a(\kn)|$ (and similarly for $C(N, \mS, \pi_0)$), then \cref{thm:large_sieve2} undergoes a self-improvement in smaller ranges of $N$ via H{\"o}lder's inequality and a careful treatment of common divisors. To this end, let
\begin{equation*}
\begin{aligned}
\|a\|_{\infty}:=\max_{\N\kn\leq N} |a(\kn)|, \qquad 
&C^\infty(N,\mathcal{S}):=\sup_{N^{1/2} \|a\|_{\infty}=1}\sum_{\pi\in\mathcal{S}}\Bigg|\sum_{\N\kn\leq N}\lambda_{\pi}(\kn)a(\kn)\Bigg|^2,
\\
\|a\|_{\infty,\pi_0}:=\max_{\N\kn\leq N}\lambda_{\pi_0\times\widetilde{\pi}_0}(\kn)^{1/2} |a(\kn)|, \qquad 
&C^\infty(N,\mathcal{S},\pi_0):=\sup_{N^{1/2} \|a\|_{\infty,\pi_0}=1}\sum_{\pi\in\mathcal{S}}\Bigg|\sum_{\N\kn\leq N}\lambda_{\pi\times\pi_0}(\kn)a(\kn)\Bigg|^2.
\end{aligned}
\end{equation*}

\begin{corollary}
\label{cor:large_sieve2}
If $\mathcal{S}\subseteq \mathfrak{F}_n$ is finite, $\pi_0\in\mathfrak{F}_{n_0}$, $N\geq 1$, $Q=\max_{\pi\in\mathcal{S}}\mathfrak{C}_{\pi}$, and $k\geq 1$ is an integer, then
\begin{equation}
\label{eqn:cor-large-sieve2}
\begin{rcases*}
C^{\infty}(N,\mathcal{S})\\
C^{\infty}(N,\mathcal{S},\pi_0)
\end{rcases*}\ll_{k,n,[F:\Q]} (NQ)^{o(1)}(N|\mathcal{S}|^{\frac{k-1}{k}}+Q^{\frac{n}{k}}|\mathcal{S}|).
\end{equation}
The factor of $Q^{n/k}$ in the right-hand side can be replaced with $\max_{\pi,\pi'\in\mathcal{S}}\mathfrak{C}_{\pi\times\tilde{\pi}'}^{1/(2k)}$.
\end{corollary}

\begin{remark}
The bound $N|\mS|^{(k-1)/k} + Q^{n/k}|\mS|$ from \eqref{eqn:cor-large-sieve2} interpolates between the factors $N + Q^n|\mS|$ from \cref{thm:large_sieve2} and $N|\mS|$ from the trivial bound, and is smaller than both of them\footnote{Given values of $N, Q$ and $|\mS|$, the optimal choice of $k$ in \eqref{eqn:cor-large-sieve2} is either the floor or the ceiling of $\log(Q^n |\mS|)/\log N$.} if $N \in (Q^{n/k}, Q^n |\mS|^{1/k})$. When $k \asymp_{n,[F:\Q]} \eps^{-1}$, this yields a nontrivial large sieve inequality in the full range $N > Q^\eps$, showing that the duality approach to large sieves can also be useful for short sums.
\end{remark}

\begin{remark} 
By slightly modifying the argument in \cref{sec:holder}, \cref{cor:large_sieve2} still holds if, in $C^\infty(N,\mathcal{S})$, one replaces $\lambda_\pi(\kn)$ with the $\kn$-th Dirichlet coefficient of $L(s, \pi)^{-1}$ (and similarly for $C^\infty(N,\mathcal{S}, \pi_0)$).
\end{remark}

\subsection{Zero density estimates}

\cref{thm:large_sieve2} is a large sieve inequality for the Dirichlet coefficients of $L$-functions in families $\{L(s,\pi)\colon\pi\in\mathcal{S}\}$ and $\{L(s,\pi\times\pi_0)\colon\pi\in\mathcal{S}\}$.  In certain problems, it is important to consider instead the Dirichlet coefficients of $1/L$, or $\log L$.  The aforementioned technical extension of \cref{thm:large_sieve2}, given in \cref{thm:large-sieve-general}, establishes large sieve inequalities of the same strength as \cref{thm:large_sieve2} for these coefficients as well.  This quickly leads to zero density estimates for families of $L$-functions. For $T\geq 0$ and $\sigma\geq 0$, we define\footnote{In this paper, all zeros are counted with multiplicity.}
\[
N_{\pi}(\sigma,T):=\sum_{\substack{ \rho=\beta+i\gamma\\ \beta\geq \sigma,~|\gamma|\leq T \\ L(\rho,\pi)=0}}1,\qquad\quad N_{\pi\times\pi_0}(\sigma,T):=\sum_{\substack{ \rho=\beta+i\gamma \\ \beta\geq \sigma,~|\gamma|\leq T \\ L(\rho,\pi\times\pi_0)=0}}1.
\]
The generalized Riemann hypothesis (GRH) for $L(s,\pi)$ (resp.\ $L(s,\pi\times\pi_0)$) is equivalent to the statement that if $\sigma>\frac{1}{2}$, then $N_{\pi}(\sigma,T)=0$ (resp.\ $N_{\pi\times\pi_0}(\sigma,T)=0$).  In the absence of strong zero-free regions for $L$-functions, strong bounds for $N_{\pi}(\sigma,T)$ and $N_{\pi\times\pi_0}(\sigma,T)$, for individual $L$-functions or in families, can oftentimes suffice for arithmetic applications.  Perhaps the first notable example of this is Hoheisel's proof \cite{Hoheisel} that the prime number theorem holds in intervals of length $x^{1-1/33000}$.

The method of Humphries and Thorner \cite{humphries2024zeros} combined with a small refinement of \eqref{eqn:TZ_large_sieve} yields
\begin{equation}
\label{eqn:TZ_zde}
\sum_{\pi\in\mathcal{S}}N_{\pi}(\sigma,T)\ll_{n,[F:\Q]}(Q^{8\theta n^2+2n+1}T^{[F:\Q]n(n+1)+4}|\mathcal{S}|^4)^{\frac{1-\sigma}{3-2\sigma}+o(1)}.
\end{equation}
Humphries and Thorner \cite[Theorem 1.1]{humphries2024zeros} also proved that
\begin{equation}
\label{eqn:HT_zde}
\sum_{\pi\in\mathcal{S}}N_{\pi\times\pi_0}(\sigma,T)\ll_{n,n_0,[F:\Q]}(|\mathcal{S}|^4 (\mathfrak{C}_{\pi_0}QT^{[F:\Q]})^{6.15\max\{n^2,n_0 n\}})^{1-\sigma+o(1)}.
\end{equation}
Using \cref{thm:large-sieve-general}, we obtain the following substantial improvements over \eqref{eqn:TZ_zde} and \eqref{eqn:HT_zde}.
\begin{theorem}
\label{thm:ZDE}
If $\mathcal{S}\subseteq\mathfrak{F}_n$ is finite, $\pi_0\in\mathfrak{F}_{n_0}$, $Q=\max_{\pi\in\mathcal{S}}\mathfrak{C}_{\pi}$, and $T\geq 1$, then
\begin{align*}
\sum_{\pi\in\mathcal{S}}N_{\pi}(\sigma,T)&\ll_{n,[F:\Q]}(D_F^{-n(n+1)}Q^{2n+1}T^{[F:\Q]n(n+1)+4}|\mathcal{S}|^4)^{\frac{1-\sigma}{3-2\sigma}+o(1)},\\
\sum_{\pi\in\mathcal{S}}N_{\pi\times\pi_0}(\sigma,T)&\ll_{n,n_0,[F:\Q]}(D_F^{-n(n+n_0)}\mathfrak{C}_{\pi_0}^{n}Q^{2n+n_0}T^{[F:\Q]n(n+n_0)+4}|\mathcal{S}|^4)^{\frac{1-\sigma}{3-2\sigma}+o(1)}.
\end{align*}
\end{theorem}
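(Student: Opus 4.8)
The plan is to deduce both estimates from \cref{thm:large-sieve-general} by running the zero-detection scheme of Humphries and Thorner \cite{humphries2024zeros} (itself refining \cite{TZ_GLn}), which converts a large sieve inequality of the shape $(NQ)^{o(1)}(N+(\textnormal{conductor})|\mathcal{S}|)$ for the Dirichlet coefficients of $L$, $L^{-1}$, and $\log L$ into a log-free zero density estimate. Since \cref{thm:large-sieve-general} is independent of progress towards GRC and has a clean conductor dependence, plugging it in where \eqref{eqn:TZ_zde} and \eqref{eqn:HT_zde} used \eqref{eqn:TZ_large_sieve}--\eqref{eqn:HT_large_sieve} removes the $Q^{4\theta n^2}$ factor, removes the $\theta$-dependence of the exponent, and lets us track the powers of $Q$, $T$, $\mathfrak{C}_{\pi_0}$, and $D_F$ sharply. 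I would treat the two cases in parallel: for $\sum_\pi N_\pi(\sigma,T)$ use the family $\{L(s,\pi):\pi\in\mathcal{S}\}$, of analytic conductor $\leq Q$ and degree $n[F:\Q]$, and for $\sum_\pi N_{\pi\times\pi_0}(\sigma,T)$ the family $\{L(s,\pi\times\pi_0):\pi\in\mathcal{S}\}$, whose analytic conductor has an explicit dependence on $D_F$, $\mathfrak{C}_{\pi_0}$, and $\mathfrak{C}_\pi$ and whose degree is $nn_0[F:\Q]$.

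The first step is zero detection. Fix $\pi$ and a zero $\rho=\beta+i\gamma$ of $L(s,\pi)$ (resp.\ $L(s,\pi\times\pi_0)$) with $\beta\geq\sigma$ and $|\gamma|\leq T$. Mollifying by a truncated $L^{-1}$, that is, writing $L(s,\cdot)M_z(s)=1+\sum_{\N\kn>z}c(\kn)\N\kn^{-s}$ with $M_z(s)=\sum_{\N\kn\leq z}\mu_{(\cdot)}(\kn)\N\kn^{-s}$, and applying a smoothed Mellin inversion at $s=\rho$ --- using the convexity bound for $L(s,\cdot)$ to discard the shifted integral once $z$ and the cutoff length $X$ are sufficiently large powers of the conductor and of $T$ --- produces, after a dyadic decomposition isolating a single scale $M\in(z,X]$ and absorbing $\N\kn^{\sigma-\beta}\ll1$ into a smooth weight, an inequality of the form
\[
1\ll(\log(QT))^{O(1)}\Big|\sum_{M<\N\kn\leq 2M}c_\pi(\kn)\,V(\N\kn/M)\,\N\kn^{-\sigma-i\gamma}\Big|,
\]
valid for every zero outside an exceptional set of size $o\big(\sum_\pi N_\pi(\sigma,T)\big)$, where $V$ is a fixed smooth bump and $c_\pi$ is the convolution of the $L$-coefficients of $\pi$ (resp.\ $\pi\times\pi_0$) with its $L^{-1}$-coefficients supported beyond $z$. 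Because the mollifier forces $M>z$, the length of the detecting polynomial is a fixed power of the conductor, which is what makes the eventual exponent a genuine multiple of $1-\sigma$; being careful that no power of $\log(\textnormal{conductor})$ escapes the $o(1)$ in the exponent is the ``log-free'' content, and the nearby zeros in the explicit formula are disposed of by the arrangement of \cite{humphries2024zeros}, which creates the exceptional set above.

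Next I would separate the zeros in $\gamma$: for each $\pi$ pass to a $\gg1/\log(QT)$-spaced subset, which costs only $(\log QT)^{O(1)}$ by the zero count $N_\pi(T+1)-N_\pi(T)\ll_{n,[F:\Q]}\log(\mathfrak{C}_\pi(|T|+3))$. Squaring and summing the displayed inequality over $\pi\in\mathcal{S}$ and the retained zeros, and using Gallagher's lemma to pass from the discrete $\gamma$-sum to an integral over $|t|\leq T+1$, bounds $\sum_\pi N_\pi(\sigma,T)$ by $(\log QT)^{O(1)}$ times $\int_{|t|\leq T+1}\sum_\pi|\sum_{M<\N\kn\leq 2M}c_\pi(\kn)V(\N\kn/M)\N\kn^{-\sigma-it}|^2\,dt$, with $M$ ranging over $O(\log X)$ dyadic scales in $(z,X]$. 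Expanding the convolution $c_\pi=\lambda\ast\mu$, applying Cauchy--Schwarz to split the two variables, and invoking \cref{thm:large-sieve-general} once in each variable --- with the shift by $it$ absorbed into the conductor (an inflation by at most $(1+T)^{n[F:\Q]}$, resp.\ $(1+T)^{nn_0[F:\Q]}$) and with test functions whose $\ell^2$-norms are controlled by the appropriate powers of $M$ --- yields a bound in which each large sieve contributes a factor $(\textnormal{length}+(\textnormal{conductor})^{n}|\mathcal{S}|)$. Optimizing the mollifier length $z$ and the cutoff length $X$ against the conductor, $T$, and $|\mathcal{S}|$, and summing over the dyadic scales, then produces the exponent $\frac{1-\sigma}{3-2\sigma}$ and the fourth power of $|\mathcal{S}|$; collecting the conductor contribution (the $Q^n$ from the large sieve, times the $Q^{n+1}$ resp.\ $\mathfrak{C}_{\pi_0}^nQ^{n_0}$ coming from the mollifier/cutoff lengths and the convexity step), the archimedean contribution ($T^{[F:\Q]n(n+1)+4}$, resp.\ with $n+1$ replaced by $n+n_0$), and the precise power of $D_F$ carried by the analytic conductor (which factors out as $D_F^{-n(n+1)}$ resp.\ $D_F^{-n(n+n_0)}$) gives the two stated inequalities.

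The main obstacle is the quantitative balancing in the last step: the mollifier must be long enough to kill the $1$ in $L(s,\cdot)M_z(s)=1+\cdots$ at a zero (forcing $z$ to be a power of the conductor), yet not so long that the ``length'' term in the large sieve overwhelms the conductor term, and squeezing the exponent down to $\frac{1-\sigma}{3-2\sigma}$ uniformly for $\sigma\in[\tfrac12,1]$ is exactly what the two-variable (bilinear) treatment of the mollified coefficients, using \cref{thm:large-sieve-general} for both $L$ and $L^{-1}$, is designed to do. A secondary point is the compatibility of the per-$\pi$ large sieve of \cref{thm:large-sieve-general} with the sum over zeros and with the archimedean $t$-shift: this is what the dyadic reduction to the $\pi$-independent coefficient $\N\kn^{-\sigma-it}V(\N\kn/M)$, Gallagher's lemma, and the polynomial-in-$T$ conductor inflation from the shift are for, and the spacing and removal steps must be made uniform in $\pi$ and in the conductor. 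Finally, the sharp $D_F$-bookkeeping requires working with the explicit shape of the analytic conductor of $L(s,\pi)$ and $L(s,\pi\times\pi_0)$ rather than merely the normalization $\mathfrak{C}_\pi\leq Q$.
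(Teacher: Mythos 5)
Your high-level ingredients are right (apply \cref{thm:large-sieve-general} to the coefficients $\mu_{\pi\times\pi_0}(\kn)$ of $L^{-1}$, run a Humphries--Thorner-style zero detection, and use Plancherel/Gallagher to pass to a $t$-integral), but the zero-detection mechanism you describe is not the one that produces the exponent $\frac{1-\sigma}{3-2\sigma}$, and as written it breaks down near $\sigma=\tfrac12$. You mollify, apply Mellin inversion at $\rho$, and then \emph{discard the shifted contour integral pointwise by convexity}, so that every (non-exceptional) zero is detected by a single Dirichlet polynomial of length between $z$ and $X$. To discard that integral for a zero with $\beta\ge\sigma$ you need $Y^{\beta-\frac12}$ to dominate a convexity bound of size roughly $\mathfrak{C}_{\pi\times\pi_0}^{1/4+o(1)}$ times the trivial bound for the mollifier, which forces $Y\gg \mathfrak{C}^{c/(\sigma-1/2)}$; the resulting exponent then degenerates as $\sigma\to\tfrac12^{+}$ and in no range reduces to $\frac{1-\sigma}{3-2\sigma}$. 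The denominator $3-2\sigma$ does not come from the bilinear treatment of the mollified coefficients: it comes from \emph{keeping} the critical-line integral as a second class of zeros and estimating it in mean square.

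Concretely, the paper follows Montgomery's two-class detection as implemented in \cite[Theorem 1.1]{humphries2024zeros}: either the tail $\sum_{\N\kn>X}\mu_{\pi\times\pi_0}(\kn)\N\kn^{-1-1/\log Y-iv}$ is large (Class 1, contributing $Y^{2(1-\sigma)}$ times a mean value), or the shifted integral is large (Class 2), contributing
\[
Y^{\frac12-\sigma}\Big(\int_{-T}^{T}\sum_{\pi\in\mathcal{S}}|L(\tfrac12+iv,\pi\times\pi_0)|^2\,dv\Big)^{\frac12}\Big(\int_{-T}^{T}\sum_{\pi\in\mathcal{S}}\Big|\sum_{\N\kn\le X}\frac{\mu_{\pi\times\pi_0}(\kn)}{\N\kn^{1/2+iv}}\Big|^2dv\Big)^{\frac12}.
\]
The two $\mu$-mean-values are handled by \cref{cor:MVT_mu} (the hybrid form of \cref{thm:large-sieve-general}, made applicable to $\mu_{\pi\times\pi_0}$ by \cref{prop:inequality-RS-covers}), while the $|L(\tfrac12+iv)|^2$ integral is estimated trivially by \cref{lem:Li1}; balancing $Y^{2(1-\sigma)}$ against $Y^{\frac12-\sigma}(\cdots)^{1/2}$ yields $Y=(\cdots)^{1/(3-2\sigma)}$ and hence the stated exponents and the $|\mathcal{S}|^4$. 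So the fix to your argument is structural rather than computational: do not discard the shifted integral; retain it as a second class of zeros, Cauchy--Schwarz it against the partial sum of $1/L$ up to $X$, and only then optimize $X$ (at the threshold where \cref{cor:MVT_mu} gives $\ll\log X$) and $Y$. Your bilinear splitting $c_\pi=\lambda\ast\mu$ with two applications of the large sieve is not needed and would not rescue the one-class detector.
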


In light of \eqref{eqn:BM}, we separately record the following immediate corollary of \cref{thm:ZDE}.

\begin{corollary}
\label{cor:full_family}
If $n,n_0,Q,T\geq 1$ and $\pi_0\in\mathfrak{F}_{n_0}$, then
\begin{align*}
\sum_{\pi\in\mathfrak{F}_n(Q)}N_{\pi}(\sigma,T)&\ll_{n,F}(|\mathfrak{F}_n(Q)|^{6-\frac{1}{n+1}}T^{[F:\Q]n(n+1)+4})^{\frac{1-\sigma}{3-2\sigma}+o(1)},\\
\sum_{\pi\in\mathfrak{F}_n(Q)}N_{\pi\times\pi_0}(\sigma,T)&\ll_{n,n_0,F}(\mathfrak{C}_{\pi_0}^{n}|\mathfrak{F}_n(Q)|^{6+\frac{n_0-2}{n+1}}T^{[F:\Q]n(n+n_0)+4})^{\frac{1-\sigma}{3-2\sigma}+o(1)}.
\end{align*}
\end{corollary}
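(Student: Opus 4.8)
The plan is to specialize \cref{thm:ZDE} to $\mathcal{S}=\mathfrak{F}_n(Q)$ and then trade the powers of $Q$ on its right-hand side for powers of $|\mathfrak{F}_n(Q)|$ by means of the lower bound \eqref{eqn:BM} of Brumley and Mili\'cevi\'c \cite{BM}. There is no new analytic input here: the deduction is a short manipulation of exponents, so all of the substantive work lies in \cref{thm:ZDE} (hence in \cref{thm:large-sieve-general}) and in \eqref{eqn:BM}, not in this step.

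Concretely, \eqref{eqn:BM} yields $Q^{n+1}\ll_{F,n}|\mathfrak{F}_n(Q)|$, i.e.\ $Q\ll_{F,n}|\mathfrak{F}_n(Q)|^{1/(n+1)}$, and since $D_F\geq 1$ the factors $D_F^{-n(n+1)}$ and $D_F^{-n(n+n_0)}$ in \cref{thm:ZDE} are $\leq 1$ and may be discarded. Apply \cref{thm:ZDE} with $\mathcal{S}=\mathfrak{F}_n(Q)$, so that $|\mathcal{S}|=|\mathfrak{F}_n(Q)|$. In its first estimate, dropping $D_F^{-n(n+1)}$ and replacing $Q^{2n+1}$ by $|\mathfrak{F}_n(Q)|^{(2n+1)/(n+1)}$ shows that the bracketed quantity is $\ll_{F,n}|\mathfrak{F}_n(Q)|^{(2n+1)/(n+1)+4}\,T^{[F:\Q]n(n+1)+4}$; and $(2n+1)/(n+1)+4=(6(n+1)-1)/(n+1)=6-1/(n+1)$, which is the exponent of $|\mathfrak{F}_n(Q)|$ in \cref{cor:full_family}. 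The same computation applied to the Rankin--Selberg estimate, now keeping the factor $\mathfrak{C}_{\pi_0}^{n}$ and replacing $Q^{2n+n_0}$ by $|\mathfrak{F}_n(Q)|^{(2n+n_0)/(n+1)}$, gives the exponent $(2n+n_0)/(n+1)+4=(6(n+1)+n_0-2)/(n+1)=6+(n_0-2)/(n+1)$, matching the second displayed bound of \cref{cor:full_family}.

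It remains to dispose of a few routine points. Both asserted inequalities are trivial unless $\mathfrak{F}_n(Q)\neq\emptyset$ (otherwise both sides vanish) and $\sigma\leq 1$ (otherwise the left-hand side vanishes, since the relevant $L$-functions are zero-free for $\Re(s)>1$); in the remaining range $|\mathfrak{F}_n(Q)|\geq 1$ and $0\leq\sigma\leq 1$, the exponent $(1-\sigma)/(3-2\sigma)+o(1)$ is at most $1/3+o(1)$, so the multiplicative constant $C=C(F,n)$ (resp.\ $C(F,n,n_0)$) produced above is raised only to a bounded power and is absorbed into the implied constant, while the $o(1)$ term is unaffected because the quantity it modifies changes only by bounded powers and by $F,n,n_0$-dependent constants. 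Lastly, the passage from the $[F:\Q]$-dependence in \cref{thm:ZDE} to the $F$-dependence in \cref{cor:full_family} is forced by the implied constant in \eqref{eqn:BM}, which depends on the field $F$ and not merely on its degree. I do not anticipate any genuine obstacle in this deduction; the difficulty is entirely in the inputs.
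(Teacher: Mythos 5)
Your proposal is correct and is exactly the deduction the paper intends: the paper gives no explicit proof, stating only that the corollary is immediate from \cref{thm:ZDE} in light of \eqref{eqn:BM}, and your exponent arithmetic ($Q\ll_{F,n}|\mathfrak{F}_n(Q)|^{1/(n+1)}$, $(2n+1)/(n+1)+4=6-1/(n+1)$, $(2n+n_0)/(n+1)+4=6+(n_0-2)/(n+1)$) together with the routine remarks on $D_F\geq 1$, the bounded nonnegative exponent, and the $F$-dependence inherited from \eqref{eqn:BM} is precisely what is needed.
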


\subsection{Log-free zero density estimates}
If each representation in  $\mathcal{S}\cup\{\pi_0\}$ satisfies GRC, then the $(QT|\mathcal{S}|)^{o(1)}$ and $(\mathfrak{C}_{\pi_0}QT|\mathcal{S}|)^{o(1)}$ terms in \cref{thm:ZDE} can be replaced by a power of $\log(QT|\mathcal{S}|)$ and $\log(\mathfrak{C}_{\pi_0}QT|\mathcal{S}|)$, respectively.  When the $o(1)$ terms can be eliminated altogether, the zero density estimates are called {\it log-free}. Log-free zero density estimates were first proved for Dirichlet $L$-functions by Linnik \cite{Linnik} as a crucial part of his proof that there exists an absolute constant $B>0$ such that if $\gcd(a,q)=1$, then the smallest prime $p\equiv a\pmod{q}$ is at most $q^B$.  (GRH for Dirichlet $L$-functions implies that any fixed $B>2$ is permissible.)  For discussion and examples of the importance of log-free zero density estimates, see \cite{BrumleyThornerZaman,HT_ZFR,IK,KanekoThorner,KM,Motohashi,SiuSound,soundararajan2019weak,TZ3,TZ_Sarkozy,Weiss}.

Conditional on certain progress towards GRC (which is unproved for $\pi\in\mathfrak{F}_n$ when $n\geq 3$ aside from certain special cases), the first log-free zero density estimate for general families was proved by Kowalski and Michel \cite{KM}.  Unconditionally, Thorner and Zaman \cite[Theorem 1.2]{TZ_GLn} proved that
\begin{equation}
\label{eqn:TZ_LFZDE}
\sum_{\pi\in\mathfrak{F}_n(Q)}N_{\pi}(\sigma,T)\ll_{n,[F:\Q]}(QT^{[F:\Q]})^{10^7 n^4 (1-\sigma)}.
\end{equation}
When $F=\Q$,  Brumley, Thorner, and Zaman \cite[Theorem 1.3]{BrumleyThornerZaman} proved that under a certain hypothesis for each $\pi\in\mathcal{S}\cup\{\pi_0\}$ (see \eqref{eqn:hyp_BTZ} below), one has the bound
\begin{equation}
\label{eqn:BTZ_LFZDE}
\sum_{\pi\in\mathcal{S}}N_{\pi\times\pi_0}(\sigma,T)\ll_{n,n_0}(\mathfrak{C}_{\pi_0}QT)^{10^8(n_0 n)^4(1-\sigma)}.
\end{equation}

Using \cref{thm:large-sieve-general} and a refinement to the process of zero detection in \cite{BrumleyThornerZaman,TZ_GLn}, we theoretically and numerically improve the exponent in \eqref{eqn:TZ_LFZDE}.  Such improvements also extend to \eqref{eqn:BTZ_LFZDE}, but more importantly, we remove all unproven hypotheses that the method of proof in \cite{BrumleyThornerZaman} required.
\begin{theorem}
\label{thm:LFZDE}
Let $F$ be a number field and $n,n_0\geq 1$.  Let $\mathcal{S}\subseteq\mathfrak{F}_n$ be finite and $Q=\max_{\pi\in\mathcal{S}}\mathfrak{C}_{\pi}$.  For all $\epsilon>0$, there exists an effectively computable constant $\Cl[abcon]{LFZDE_fam}=\Cr{LFZDE_fam}(n)>0$ such that if $Q\geq D_F^{\Cr{LFZDE_fam}}$ and $\pi_0\in\mathfrak{F}_{n_0}$, then
\begin{align*}
\sum_{\pi\in\mathcal{S}}N_{\pi}(\sigma,T)&\ll_{n,[F:\Q]}(QT^{[F:\Q]})^{543 n^3(1-\sigma)},\\
\sum_{\pi\in\mathcal{S}}N_{\pi\times\pi_0}(\sigma,T)&\ll_{n,n_0,[F:\Q]}(\max\{\mathfrak{C}_{\pi_0},Q\}T^{[F:\Q]})^{543\max\{n^3,n_0^3\}(1-\sigma)}.
\end{align*}
\end{theorem}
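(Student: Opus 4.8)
The plan is to run the zero-detection method of Linnik — in the $\GL_n$ form developed by Thorner--Zaman and Brumley--Thorner--Zaman — with the unconditional large sieve inequality \cref{thm:large-sieve-general} replacing the GRC-dependent inputs used there. Fix $\sigma\in[\tfrac{1}{2},1)$. Two ranges are dispatched immediately. If $1-\sigma$ is bounded below by a fixed multiple of $n^{-2}$, the asserted bound follows from the trivial count $\sum_{\pi\in\mathcal{S}}N_{\pi}(\sigma,T)\leq\sum_{\pi\in\mathcal{S}}N_{\pi}(\tfrac{1}{2},T)\ll_{n,[F:\Q]}|\mathcal{S}|\log(QT)$ together with $|\mathcal{S}|\leq|\mathfrak{F}_n(Q)|\ll_{n,F}Q^{O_n(1)}$, since then $543n^3(1-\sigma)\log(QT^{[F:\Q]})$ exceeds $O_n(\log Q)$. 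If $1-\sigma$ is smaller than a fixed multiple of $\log\log(QT)/\log(QT)$, we invoke a classical zero-free region for the family (as in \cite{HT_ZFR,humphries2024zeros}), which leaves at most negligibly many exceptional $\pi$. In the remaining intermediate range, $1-\sigma$ is large enough that $(QT^{[F:\Q]})^{\delta(1-\sigma)}$ exceeds any fixed power of $\log(QT)$ for a suitably small $\delta>0$; this is precisely what lets us treat the $(\cdot)^{o(1)}$ factors in \cref{thm:large-sieve-general} and in the attendant Rankin--Selberg mean-value estimates — which are really powers of $\log$, with only logarithmic dependence on conductors — as constants.

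Next I would carry out the zero detection over unit intervals in $t$, counting multiplicities, so that no factor of $\log(QT)$ is incurred upon summing. Fix a parameter $x:=(\max\{\mathfrak{C}_{\pi_0},Q\}T^{[F:\Q]})^{\kappa}$ with $\kappa=\kappa(n)$ of size $\asymp n^3$, a short mollifier $M_{\pi}(s)=\sum_{\N\kn\leq x^{\eta}}\mu_{\pi}(\kn)\N\kn^{-s}$, and a smooth weight of Mellin length $x$. Using $L(\rho,\pi)=0$ together with a contour-shift/Littlewood-type argument applied to $\tfrac{L'}{L}(s,\pi)$ and to $L(s,\pi)^{-1}M_{\pi}(s)$, one obtains for each unit interval $I$ and a slight enlargement $I^{+}$ an inequality bounding the number $k_I$ of zeros $\rho=\beta+i\gamma$ of $L(s,\pi)$ with $\beta\geq\sigma$, $\gamma\in I$ by
\begin{align*}
k_{I}\ \ll\ &\int_{I^{+}}\Big|\sum_{\N\kn\leq x}a_{\pi}(\kn)\N\kn^{-\sigma-it}\Big|^{2}dt\\
&\quad+(1-\sigma)\int_{\sigma}^{1}\!\int_{I^{+}}\Big|\sum_{\N\kn\leq x}a_{\pi}(\kn)(\log\N\kn)\N\kn^{-\alpha-it}\Big|^{2}dt\,d\alpha+(\text{a power saving in }x),
\end{align*}
where on each dyadic block of $\N\kn$ the coefficient $a_{\pi}(\kn)$ is a bounded linear combination of the Dirichlet coefficients of $\log L(s,\pi)$ and of $L(s,\pi)^{-1}M_{\pi}(s)$; passing from the zero $\rho$ to the fixed line $\Re s=\sigma$ and its derivative over $\alpha\in[\sigma,1]$ uses only $\beta\geq\sigma$, and is the source of the $(1-\sigma)$ in the final exponent. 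The decisive point — responsible both for the unconditionality and for the improved constant — is that working with the coefficients of $\log L$ (supported on prime powers, hence bounded in mean square via Rankin--Selberg with no appeal to GRC) rather than with the convolution coefficients of $L\cdot M_{\pi}$ removes the $\theta$-dependent losses of \cite{TZ_GLn,BrumleyThornerZaman}; that \cref{thm:large-sieve-general} simultaneously treats the coefficients of $L$, $L^{-1}$, \emph{and} $\log L$ is exactly what makes this feasible, and the balancing of the mollifier length $x^{\eta}$ against $x$ is the refinement that lowers the $n$-power from $n^{4}$ to $n^{3}$.

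I would then sum the displayed inequality over the disjoint intervals $I$ and over $\pi\in\mathcal{S}$; disjointness (up to bounded overlap of the $I^{+}$) costs no logarithm. Writing $F_{\pi,\alpha}(t):=\sum_{\N\kn\leq x}a_{\pi}(\kn)\N\kn^{-\alpha-it}$ and discretizing each $t$-integral at $O(T)$ points, one applies \cref{thm:large-sieve-general} with $N\asymp x$ on each vertical line and bounds the outcome by $(x+Q^{n}|\mathcal{S}|)$ times $\int_{\sigma}^{1}x^{1-2\alpha}\,d\alpha\asymp x^{1-2\sigma}/\log x$, up to a fixed power of $\log(xQT)$ (the mean square of the detecting coefficients being $\ll$ a power of $\log x$ uniformly in $\pi$, by Rankin--Selberg; in the twisted case the weight $\lambda_{\pi_0\times\widetilde{\pi}_0}(\kn)$ built into $\|\cdot\|_{2,\pi_0}$ supplies precisely this). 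Since $\kappa\asymp n^{3}$ and $|\mathcal{S}|\leq|\mathfrak{F}_n(Q)|\ll_{n,F}Q^{O_n(1)}$, the term $x$ dominates $Q^{n}|\mathcal{S}|$, so $\sum_{\pi\in\mathcal{S}}N_{\pi}(\sigma,T)\ll x^{2(1-\sigma)}$ up to a power of $\log(QT)$. Choosing $\kappa$ just below $\tfrac{543}{2}n^{3}$, and using the hypothesis $Q\geq D_F^{\Cr{LFZDE_fam}}$ to absorb the contributions of conductors and archimedean factors (polynomial in $D_F$) into a small power of $Q$, the slack in the exponent consumes all powers of $\log(QT)$ throughout the intermediate range, giving $\sum_{\pi\in\mathcal{S}}N_{\pi}(\sigma,T)\ll(QT^{[F:\Q]})^{543n^{3}(1-\sigma)}$. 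The Rankin--Selberg statement is obtained identically — detecting zeros of $L(s,\pi\times\pi_0)$, applying \cref{thm:large-sieve-general} to $\{L(s,\pi\times\pi_0)\colon\pi\in\mathcal{S}\}$, and replacing $n$ throughout by the relevant $\max\{n,n_0\}$-type quantities and $Q$ by $\max\{\mathfrak{C}_{\pi_0},Q\}$; in particular no analogue of the hypothesis \eqref{eqn:hyp_BTZ} of \cite{BrumleyThornerZaman} is required, precisely because \cref{thm:large-sieve-general} is unconditional.

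The main obstacle is the zero-detection step. One must engineer the contour/Littlewood argument so that (i) the error terms are a genuine power saving in $x$ \emph{uniformly} over $\pi\in\mathcal{S}$ and over $|\gamma|\leq T$ — which forces $x$ to be a sufficiently large power of the analytic conductor of the relevant auxiliary Rankin--Selberg $L$-function (of size $\mathfrak{C}_{\pi}^{O(n)}$) and of $(1+|\gamma|)^{O([F:\Q]n)}$, and this is what makes $\kappa\asymp n^{3}$ and ultimately fixes the constant — while simultaneously (ii) the detection counts every zero of each interval $I$, with multiplicity, in a single stroke (to remain log-free) and uses only coefficients whose mean square over $\kn$ is controlled with no recourse to GRC (to remain unconditional). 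It is the coupling of a short mollifier $M_{\pi}$ with the $\log L$-coefficients, in place of the raw coefficients of $L\cdot M_{\pi}$, that reconciles these demands, and tracking the exponents through this construction is the computation yielding the explicit value $543$.
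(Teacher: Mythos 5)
Your proposal takes the mollifier/Littlewood (Montgomery-style) route to zero detection, whereas the paper detects zeros via Tur\'an's lower bound for power sums (\cref{prop:Turan}) applied to high derivatives of $\tfrac{L'}{L}(s,\pi\times\pi_0)$, combined with the Selberg-sieve-weighted large sieve \eqref{eqn:large-sieve-general-sifted} through \cref{cor:ls-log-derivative}. This is not merely a stylistic difference: the paper states, and the known literature bears out, that the power-sum method is at present the only way to obtain \emph{log-free} density estimates for these families unconditionally, and your argument runs into the two obstructions that motivate that choice. First, your treatment of $\sigma$ near $1$ does not work. Absorbing powers of $\log(QT)$ into slack in the exponent $\kappa$ is only possible when $1-\sigma\gg\log\log(QT)/\log(QT)$; in the complementary range the asserted bound is a fixed power of $\log(QT)$, degenerating to $O(1)$ for the \emph{entire family} when $1-\sigma\asymp 1/\log(QT)$ (this is exactly what the paper proves at $\sigma=1-\tfrac{1}{R\mathcal{L}}$, and it is the regime where ``log-free'' has content, e.g.\ for Linnik-type applications). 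No classical zero-free region for Rankin--Selberg $L$-functions in this generality delivers that: such regions have width at most $O(1/\log\mathfrak{C})$, may admit exceptional zeros, and in any case say nothing about how many of the $|\mathcal{S}|\leq Q^{O_n(1)}$ representations have a zero just outside the region. Your dichotomy therefore leaves the decisive range uncovered.

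Second, the quantitative inputs you invoke are not actually log-free. The mean-value estimates underlying the mollifier method (e.g.\ \cref{lem:mertens} and the bounds of \cref{lem:Li1}) carry factors $\mathfrak{C}^{\epsilon}$, i.e.\ $Q^{o(1)}$, not fixed powers of $\log$; the same is true of the $Q^{\epsilon}$ in \eqref{eqn:large-sieve-general}. These are precisely the losses that produce the $+o(1)$ in the exponent of \cref{thm:ZDE}, which is the paper's mollifier-based (non-log-free) result, and your proposal does not explain how coupling $M_{\pi}$ with the $\log L$ coefficients removes them. The paper's mechanism for avoiding them is different in kind: the Selberg sieve weights in \eqref{eqn:large-sieve-general-sifted} produce a diagonal term $\frac{x}{T\log z}\cdot\mathop{\mathrm{Res}}_{s=1}L(s,\pi\times\tilde\pi)\big/\sum_{\N\kn\leq z}\lambda_{\pi\times\tilde\pi}(\kn)\N\kn^{-1}$ in which the residue cancels exactly against the lower bound \eqref{eqn:diagonal-rs-lower-bound}, the prime powers $\kp^k$ with $k\gg \tilde n^2$ are estimated pointwise via \eqref{eqn:LRS_2}, and $x$ is taken $\geq(\tilde QT^{[F:\Q]})^{2\tilde n^3+6\tilde n^2}$ so that the $Q^{n+\epsilon}$ off-diagonal is absorbed into the clean diagonal $x^2T^{-2}\|a\|_{\infty}^2$ of \cref{cor:ls-log-derivative}. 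Without an analogue of this cancellation, and without a detection inequality valid uniformly down to $1-\sigma\asymp 1/\log(QT)$ (which Tur\'an's power-sum inequality supplies and a Littlewood-type argument does not, at least not with the inputs you cite), the proposal does not establish the theorem. The claimed constant $543$ is likewise tied to the specific optimization \eqref{eqn:minimization_problem} in the power-sum step and cannot be recovered from the sketch as written.
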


\subsection{Average bounds for $L(\frac{1}{2},\pi)$ and $L(\frac{1}{2},\pi\times\pi_0)$}

Let $\pi\in\mathfrak{F}_n$ and $\pi_0\in\mathfrak{F}_{n_0}$.  The generalized Lindel{\"o}f hypothesis (GLH) asserts that
\begin{equation}
\label{eqn:Lindelof}
|L(\tfrac{1}{2},\pi)|\ll_{n,[F:\Q]}\mathfrak{C}_{\pi}^{o(1)},\qquad |L(\tfrac{1}{2},\pi\times\pi_0)|\ll_{n,[F:\Q]}\mathfrak{C}_{\pi\times\pi_0}^{o(1)},
\end{equation}
while it follows from the Phragm{\'e}n--Lindel{\"o}f principle and Stirling's formula that
\begin{equation}
\label{eqn:convexity}
|L(\tfrac{1}{2},\pi)|\ll_{n,[F:\Q]}\mathfrak{C}_{\pi}^{\frac{1}{4}+o(1)},\qquad 
|L(\tfrac{1}{2},\pi\times\pi_0)|\ll_{n,n_0,[F:\Q]}\mathfrak{C}_{\pi\times\pi_0}^{\frac{1}{4}+o(1)}.
\end{equation}
Let $\mathcal{S}\subseteq\mathfrak{F}_n$ be a finite subset, and let $Q=\max_{\pi\in\mathcal{S}}\mathfrak{C}_{\pi}$. When combined with the approximate functional equation, a large sieve inequality of the form
\begin{equation}
\label{eqn:AFE_large_sieve}
C(Q^{\frac{1}{2}+o(1)},\mathcal{S})\ll_{n,F} |\mathcal{S}| Q^{o(1)}
\end{equation}
would imply that \eqref{eqn:Lindelof} holds on average over $\pi \in \mS$:
\begin{equation}
\label{eqn:ideal_2nd_moment}
\sum_{\pi\in\mathcal{S}}|L(\tfrac{1}{2},\pi)|^2\ll_{n,F} |\mS| Q^{o(1)},\qquad Q=\max_{\pi\in\mathcal{S}}\mathfrak{C}_{\pi}.
\end{equation}

When $n=1$ and $F=\Q$, \eqref{eqn:ideal_2nd_moment} follows from \eqref{eqn:large-sieve-dirichlet} (see Heath-Brown \cite{HB_real} for a similar result when the $\chi$ are restricted to be primitive quadratic).   There are many examples of $n\geq 2$ and $\mathcal{S}\subseteq\mathfrak{F}_n$ such that \eqref{eqn:large_sieve_optimal} (hence \eqref{eqn:ideal_2nd_moment}) holds; for example, see \cite{blomer2023density,deshouillers1982kloosterman,jana2020applications}.  Typically, in order to obtain \eqref{eqn:AFE_large_sieve}, a version of the Kuznetsov trace formula must be available. There are several interesting families $\mathcal{S}$ for which this amount of structure is not known, and it is not feasible to embed $\mathcal{S}$ into a larger, more structured family.  One important such family is $\mathfrak{F}_n(Q)$.

By combining \cref{cor:large_sieve2} with the approximate functional equation, we obtain the strongest known bound for the second moment of $L(\frac{1}{2},\pi)$ as $\pi$ varies in an arbitrary family $\mathcal{S}\subseteq\mathfrak{F}_n$.  In what follows, if $x\in\R$, then $\lceil x\rceil := \min\{m\in\Z\colon m\geq x\}$.

\begin{theorem}
\label{thm:2ndMoment}
Let $\mathcal{S}\subseteq\mathfrak{F}_n$ be a finite subset and $Q=\max_{\pi\in\mathcal{S}}\mathfrak{C}_{\pi}$.  If $\delta=\delta_{\mathcal{S}}\geq 0$ satisfies $|\mathcal{S}|\gg_{n,F} Q^{\delta}$, then
\[
\sum_{\pi\in\mathcal{S}}|L(\tfrac{1}{2},\pi)|^2\ll_{n,F} |\mathcal{S}|Q^{\frac{1}{2}-\frac{\delta}{\lceil 2(n+\delta)\rceil}+o(1)}.
\]
\end{theorem}

\begin{remark}
\cref{thm:2ndMoment} improves upon the convexity bound \cref{eqn:convexity} on average as soon as $\delta >0$.
For the full family $\mathcal{S}=\mathfrak{F}_n(Q)$, we may take $\delta = n+1$ per \eqref{eqn:BM}, in which case
\begin{equation}
\label{eqn:universal_moment}
\sum_{\pi\in\mathfrak{F}_n(Q)}|L(\tfrac{1}{2},\pi)|^2\ll_{n,F} |\mathfrak{F}_n(Q)| Q^{\frac{1}{4}-\frac{1}{8n+4}+o(1)}.
\end{equation}
This is new for all $n\geq 2$ and all $F$. Using a dyadic argument, we infer from Chebyshev's inequality, \eqref{eqn:BM_asymptotic}, and \eqref{eqn:universal_moment} that 
\begin{equation}
\label{eqn:Chebyshev_moment}
\Big|\Big\{\pi\in\mathfrak{F}_n(Q)\colon |L(\tfrac{1}{2},\pi)|\geq \mathfrak{C}_{\pi}^{\frac{1}{8}-\frac{1}{16n+8}+\epsilon}\Big\}\Big|\ll_{n,F,\epsilon} |\mathfrak{F}_n(Q)| Q^{-\epsilon}.
\end{equation}
In contrast, Nelson \cite[Theorem 1.2]{Nelson} proved that if $\mathfrak{C}_{\pi}$ does not exhibit conductor dropping, then $|L(\frac{1}{2},\pi)|\ll_{n,F,\N\kq_{\pi}}\mathfrak{C}_{\pi}^{1/4-\varpi_n}$, where $\varpi_n = \frac{1}{6n^5}+O(\frac{1}{n^6})$.  Yang \cite{Yang} showed that one may take $\varpi_n\gg\frac{1}{n^3}$.
\end{remark}

\begin{remark}
Like our other results, \cref{thm:2ndMoment} can improve in the presence of conductor dropping. A careful inspection of our proof shows that if
\[
Q = \max_{\pi\in\mathcal{S}}\mathfrak{C}_{\pi},\qquad R = \max_{\pi,\pi'\in\mathcal{S}}\mathfrak{C}_{\pi\times\tilde{\pi}'},\qquad |\mathcal{S}|\gg_{n,F}Q^{\delta},\qquad k=\Big\lceil\frac{\log(R|\mathcal{S}|^2)}{\log Q}\Big\rceil,
\]
then one can slightly refine \cref{thm:2ndMoment} to $\sum_{\pi\in\mathcal{S}}|L(\tfrac{1}{2},\pi)|^2\ll_{n,F} |\mathcal{S}| Q^{1/2 - \delta/k+o(1)}$.
\end{remark}

\begin{remark}
We prove a more general result, with $L(\frac{1}{2},\pi)$ replaced by $L(\frac{1}{2}+it,\pi)$ for any $t\in\mathbb{R}$.  Also, using the same ingredients, one can deduce bounds for higher moments $\sum_{\pi \in \mS} |L(\tfrac{1}{2}, \pi)|^{2r}$.
\end{remark}

Complementing \eqref{eqn:Chebyshev_moment}, one might want a small power-saving improvement over \eqref{eqn:convexity} for all $\pi\in\mathfrak{F}_n(Q)$ outside of a very small exceptional set (say, of size $|\mathfrak{F}_n(Q)|^{\epsilon}$).  As in \cite{humphries2024zeros,TZ_GLn}, such a result follows from \cref{thm:ZDE,cor:full_family}; see \cite[Section 2.1]{humphries2024zeros} for a discussion. By inserting \cref{cor:full_family} into the arguments in \cite[Section 6]{humphries2024zeros} instead of \eqref{eqn:HT_zde}, we immediately arrive at the following corollary. Unlike \cite{humphries2024zeros,TZ_GLn}, if $n_0\ll n$, the power saving over the convexity bound \eqref{eqn:convexity} does not decay to $0$ as $n$ grows.

\begin{corollary}
\label{cor:subconvex}
If $n,n_0,Q\geq 1$, $0<\epsilon\leq 1$, and $\pi_0\in\mathfrak{F}_{n_0}$, then
\begin{align*}
\begin{gathered}
\Big|\Big\{\pi\in\mathfrak{F}_n(Q)\colon |L(\tfrac{1}{2},\pi)|\geq \mathfrak{C}_{\pi}^{\frac{1}{4}-\frac{\epsilon}{10^{12}}}\Big\}\Big|\ll_{n,F,\epsilon}|\mathfrak{F}_n(Q)|^{\epsilon},\label{eqn:subconvex}\\
\Big|\Big\{\pi\in\mathfrak{F}_n(Q)\colon |L(\tfrac{1}{2},\pi\times\pi_0)|\geq \mathfrak{C}_{\pi\times\pi_0}^{\frac{1}{4}-\frac{\epsilon}{(1+\epsilon)10^9}\frac{n}{6n+n_0+4}}\Big\}\Big|\ll_{n,n_0,F,\epsilon}(\mathfrak{C}_{\pi_0}^n|\mathfrak{F}_n(Q)|)^{\epsilon}.
\end{gathered}
\end{align*}
\end{corollary}

\subsection{Density estimate for Langlands parameters and the size of $\mathfrak{F}_n(Q)$}

As mentioned above, our approach builds on the work of Lichtman and Pascadi \cite{lichtman2024density}.  Their main arithmetic result \cite[Theorem 1]{lichtman2024density}, restricted to the non-archimedean setting, is a density estimate for Satake parameters of $\pi\in\mathcal{S}$.  To describe their result, let $F=\Q$, $p$ be prime, and $\theta>0$.  Given $\pi\in\mathfrak{F}_n$, write $q_{\pi}$ for the arithmetic conductor and $\{\alpha_{j,\pi}(p)\colon 1\leq j\leq n\}$ for the set of non-archimedean Langlands parameters of $\pi$ at $p$.  When $p\nmid q_{\pi}$, these are the Satake parameters at $p$.  For a finite family $\mathcal{S}\subseteq \mathfrak{F}_n$, let $Q=\max_{\pi\in\mathcal{S}}\mathfrak{C}_{\pi}$.  Theorem 1 of \cite{lichtman2024density} states that if each $\pi\in\mathcal{S}$ satisfies $p\nmid q_{\pi}$, then
\begin{equation}
\label{eqn:LichtmanPascadi}
\#\{\pi\in\mathcal{S}\colon \max_{1\leq j\leq n}|\alpha_{j,\pi}(p)|\geq p^{\theta}\}\ll_{n,p,\theta}(Q^{2n})^{\frac{1-2\theta}{4\theta}+o(1)}.
\end{equation}
GRC would imply that if $\theta>0$, then the left-hand side vanishes.  In contrast with other density estimates for non-archimedean Langlands parameters (e.g., \cite{blomer2023density,jana2020applications}), the exponent on the right-hand side of \eqref{eqn:LichtmanPascadi} is $o_{n \to \infty}(1)$ near $\theta = \frac{1}{2}-\frac{1}{n^2+1}$, at which point the work of Luo, Rudnick, and Sarnak \cite{LRS} ensures that the left-hand side of \eqref{eqn:LichtmanPascadi} equals zero. By combining our main technical result, \cref{thm:large-sieve-general}, with the ideas in \cite{lichtman2024density}, we achieve several refinements.
\begin{theorem}
\label{thm:density}
Let $F$ be a number field, $\kp$ a prime ideal of $\mathcal{O}_F$, $\mathcal{S}\subseteq \mathfrak{F}_n$ a finite subset, and $Q=\max_{\pi\in\mathcal{S}}\mathfrak{C}_{\pi}$.  For all $\epsilon>0$, there exists an effectively computable constant $\Cl[abcon]{density_range}=\Cr{density_range}(n,\epsilon)\geq 1$ such that if $Q\geq D_F^{\Cr{density_range}}$ and $\theta\geq 0$, then
\begin{equation}
\label{eqn:density}
\#\{\pi\in\mathcal{S}\colon \max_{1\leq j\leq n}|\alpha_{j,\pi}(\kp)|\geq\N\kp^{\theta}\}\ll_{n,[F:\Q]}\N\kp^{n}(D_F^{-n^2}Q^{2n})^{\frac{1-2\theta}{\max\{1,4\theta\}}+\epsilon}.
\end{equation}
\end{theorem}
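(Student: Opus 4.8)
The plan is to follow the strategy of Lichtman--Pascadi \cite{lichtman2024density}, replacing their large sieve by \cref{thm:large-sieve-general} and carefully tracking the dependence on $D_F$ and $\N\kp$; the new ingredient needed beyond \cite{lichtman2024density} is that the local input must be made uniform over ramified primes. First I would reduce to the range $0<\theta\le\frac12-\frac1{n^2+1}$: for larger $\theta$ the left-hand side of \cref{eqn:density} vanishes by the bound of Luo--Rudnick--Sarnak \cite{LRS}, while for $\theta$ below a threshold $\theta_0=\theta_0(n)$ --- in particular the entire case $n=1$, where the Ramanujan conjecture is classical --- the estimate follows from the trivial bound $\#\{\cdots\}\le|\mS|\le|\F_n(Q)|\ll_{n,[F:\Q]}Q^{n+1+o(1)}$ together with the hypothesis $Q\ge D_F^{\Cr{density_range}}$, which forces $Q^{n+1}$ to be absorbed into $\N\kp^{n}(D_F^{-n^2}Q^{2n})^{(1-2\theta)-\epsilon}$ once $\Cr{density_range}=\Cr{density_range}(n,\epsilon)$ is chosen large. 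Set $\mB:=\{\pi\in\mS:\max_j|\alpha_{j,\pi}(\kp)|\ge\N\kp^{\theta}\}$, so that it suffices to bound $|\mB|$; note that every $\pi\in\mB$ has $\rho_\pi:=\max_j|\alpha_{j,\pi}(\kp)|\ge\N\kp^{\theta}>1$.

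The heart of the argument is an amplification powered by the positive semi-definiteness of the Gram matrix underlying $C(N,\mB)$. Given a parameter $K\ge1$, set $N:=\N\kp^{K}$ and restrict the test sequences in the large sieve to those supported on $\{\kp^{k}:0\le k\le K\}$. The resulting supremum equals the largest eigenvalue of the $\mB\times\mB$ Gram matrix $G$ with entries $\sum_{k\le K}\lambda_\pi(\kp^k)\overline{\lambda_{\pi'}(\kp^k)}$, hence is at least $\operatorname{Tr}(G)/\operatorname{rank}(G)\ge(K+1)^{-1}\sum_{\pi\in\mB}\sum_{k\le K}|\lambda_\pi(\kp^k)|^2$ since $\operatorname{rank}(G)\le K+1$; on the other hand it is at most $C(N,\mB)\ll_{n,[F:\Q]}(NQ)^{o(1)}(N+D_F^{-n^2}Q^{n}|\mB|)$ by \cref{thm:large-sieve-general} (in its discriminant-explicit form). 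So everything reduces to the pointwise lower bound $\sum_{k\le K}|\lambda_\pi(\kp^k)|^2\gg_n\N\kp^{2\theta K}$, uniform over $\pi\in\mB$. For this I would use that $\sum_{k}\lambda_\pi(\kp^k)w^{k}=\prod_{j}(1-\alpha_{j,\pi}(\kp)w)^{-1}$ (the $\alpha_{j,\pi}(\kp)$ being the non-archimedean Langlands parameters, whether or not $\kp\mid\kq_\pi$), so that by Parseval, for $0<r<\rho_\pi^{-1}$, one has $\sum_{k\ge0}|\lambda_\pi(\kp^k)|^{2}r^{2k}=\frac1{2\pi}\int_{0}^{2\pi}\prod_{j}|1-\alpha_{j,\pi}(\kp)re^{i\phi}|^{-2}\,d\phi$; the integrand is non-negative, and isolating the factor attached to a parameter of modulus $\rho_\pi$ over a window of width $\asymp(1-r\rho_\pi)$ gives a lower bound $\gg_n(1-r\rho_\pi)^{-1}$, so that choosing $r=\rho_\pi^{-1}(1-n(\log K)/K)$ and comparing with the trivial tail bound $|\lambda_\pi(\kp^k)|\ll_n k^{n}\rho_\pi^{k}$ yields $\sum_{k\le K}|\lambda_\pi(\kp^k)|^{2}\gg_n r^{-2K}\gg_n\rho_\pi^{2K}\ge\N\kp^{2\theta K}$, uniformly in $\pi$.

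Feeding these estimates together gives $|\mB|\,\N\kp^{2\theta K}\ll_{n,[F:\Q]}(NQ)^{o(1)}(N+D_F^{-n^2}Q^{n}|\mB|)$. Taking $K$ to be the least integer with $\N\kp^{2\theta K}\ge 2(NQ)^{o(1)}D_F^{-n^2}Q^{n}$ forces the term $N=\N\kp^{K}$ to dominate on the right, whence $|\mB|\ll_{n,[F:\Q]}(NQ)^{o(1)}N\,\N\kp^{-2\theta K}=\N\kp^{K(1-2\theta)+o(K)}$; for this $K$ one has $\N\kp^{K}\asymp(D_F^{-n^2}Q^{n})^{1/(2\theta)}\N\kp^{O(1)}$, and substituting, together with $D_F^{-n^2}Q^{n}=D_F^{-n^2/2}(D_F^{-n^2}Q^{2n})^{1/2}$ and $D_F\ge1$, yields $|\mB|\ll_{n,[F:\Q]}\N\kp^{n}(D_F^{-n^2}Q^{2n})^{\frac{1-2\theta}{4\theta}+o(1)}$, which is \cref{eqn:density} when $\theta\ge\frac14$; for $\theta\in(\theta_0,\frac14)$, where the stated exponent is $1-2\theta$, the same inequality combined with the trivial bound for $|\F_n(Q)|$ and a correspondingly smaller choice of $N$ gives the claim, and throughout $o(1)$ is replaced by $\epsilon$ using $Q\ge D_F^{\Cr{density_range}}$. (This also recovers and generalizes \cref{eqn:LichtmanPascadi}: taking $F=\Q$, $\kp$ unramified for all $\pi\in\mS$, and $\theta\ge\frac14$ gives the exponent $\frac{1-2\theta}{4\theta}$ on $Q^{2n}$ with the $p$-dependence now explicit.)

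I expect the main obstacle to be the uniform local bound $\sum_{k\le K}|\lambda_\pi(\kp^k)|^{2}\gg_n\N\kp^{2\theta K}$: it must hold for every $\pi\in\mB$ at once, with exactly the exponent $2\theta$ (a cruder treatment via the power sums $\sum_j\alpha_{j,\pi}(\kp)^k$ would only give $2\theta/n$ after invoking Newton's identities, degrading the final exponent), and it must be valid at ramified primes, where the local factor has fewer than $n$ parameters. This is precisely where positive semi-definiteness --- of the Gram matrix $G$ on the one hand and of the Parseval integrand $\prod_j|1-\alpha_{j,\pi}(\kp)re^{i\phi}|^{-2}$ on the other --- does the work that progress towards the generalized Ramanujan conjecture does in conditional arguments; together with the careful accounting of the discriminant in \cref{thm:large-sieve-general} and in the analytic conductor, it is also what produces the refined factors $D_F^{-n^2}$ and $\N\kp^{n}$, and it is the one place where the details of the optimization (the choice of $r$ and of $K$, and the treatment of the transitional range $\theta<\frac14$) need to be executed with care.
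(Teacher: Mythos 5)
Your amplifier is genuinely different from the paper's: the paper applies the large sieve to the coefficients of $\log L$ supported on a \emph{single} ideal $\kp^k$, with $k$ chosen via the Kolesnik--Strauss power-sum theorem, whereas you use all of $\{\kp^k\colon k\le K\}$ and lower-bound the top eigenvalue of the Gram matrix by $\mathrm{Tr}(G)/\mathrm{rank}(G)$. That reduction is fine, but your local input fails as derived. Parseval gives $\sum_{k\ge0}|\lambda_\pi(\kp^k)|^2r^{2k}\gg_n(1-r\rho_\pi)^{-1}$, and since $r<1$ the weights $r^{2k}\le 1$ can only be \emph{dropped}, yielding $\sum_{k\le K}|\lambda_\pi(\kp^k)|^2\gg_n K/\log K$; the step ``yields $\gg_n r^{-2K}$'' reverses an inequality. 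Moreover the claim $\sum_{k\le K}|\lambda_\pi(\kp^k)|^2\gg_n \rho_\pi^{2K}$ with an implied constant depending only on $n$ is false: if $\alpha_{j,\pi}(\kp)=\rho e^{2\pi i j/n}$ then $\lambda_\pi(\kp^k)=\rho^k\mathbf{1}_{n\mid k}$ and the sum can be as small as $\rho^{2K}\N\kp^{-(n-1)}$. The statement can be repaired --- comparing coefficients of $w^K$ in $(1-\alpha_{j_0,\pi}(\kp)w)^{-1}=\prod_{j\ne j_0}(1-\alpha_{j,\pi}(\kp)w)\cdot\sum_k\lambda_\pi(\kp^k)w^k$ shows that some $k\in(K-n,K]$ has $|\lambda_\pi(\kp^k)|\ge 2^{1-n}\rho_\pi^{k}$ --- but the resulting loss of $\N\kp^{n-1}$ must then be carried into the final factor $\N\kp^{n}$.

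The more serious gap is the range $\theta<\tfrac14$. Your reduction for small $\theta$ quotes $|\F_n(Q)|\ll Q^{n+1+o(1)}$, which is the \emph{conjectured} size of the universal family; only the lower bound \eqref{eqn:BM} is known, and the best proven upper bound is $D_F^{-n^2}Q^{2n+\epsilon}$ (\cref{thm:family}, itself deduced from the $\theta=0$ case of this very theorem, so the logical order must be arranged to avoid circularity). Even granting \cref{thm:family}, the trivial bound only handles $\theta\ll\epsilon$, not a fixed threshold $\theta_0(n)$; and in the remaining range $\epsilon\ll\theta<\tfrac14$ your optimization of \eqref{eqn:large-sieve-general} produces the exponent $\frac{1-2\theta}{4\theta}>1$, which is weaker than trivial, not the claimed $1-2\theta$. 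Reaching $1-2\theta$ requires the second estimate \eqref{eqn:large-sieve-general_2}: for $N\ge D_F^{-n^2}Q^{2n}|\mS|^{\epsilon}$ the off-diagonal term $D_F^{-n^2/2}Q^{n}|\mS|$ disappears entirely (by shifting the contour far to the left and invoking \eqref{eqn:Lbound2}), so that $N\asymp D_F^{-n^2}Q^{2n}$ yields $|\mB|\ll \N\kp^{O(n)}N^{1-2\theta+o(1)}$. Without this input your argument does not close for $\theta<\tfrac14$. (Two smaller points: \cref{thm:large-sieve-general} gives $D_F^{-n^2/2}Q^n$, not $D_F^{-n^2}Q^n$, in the off-diagonal term; and the case of large $\N\kp$ also falls back on \cref{thm:family}.)
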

\begin{remark}
An inspection of the proof will show that one can replace $D_F^{-n^2}Q^{2n}$ with $\max_{\pi,\pi'\in\mathcal{S}}\mathfrak{C}_{\pi\times\tilde{\pi}'}$.
\end{remark}

Compared to \cite[Theorem 1]{lichtman2024density}, \cref{thm:density} removes the requirement that $F=\Q$, explicates the dependence on $\kp$ on the right-hand side, eliminates the need for each $\pi\in\mathcal{S}$ to be unramified at $\kp$, and improves the exponent on the right-hand side of \eqref{eqn:LichtmanPascadi} when $0\leq\theta<\frac{1}{4}$.  Surprisingly, several of these refinements come together to produce a nontrivial upper bound on $|\mathfrak{F}_n(Q)|$.

\begin{corollary}
\label{cor:family}
Let $F$ be a number field.  For all $\epsilon>0$, there exists an effectively computable constant $\Cl[abcon]{family}=\Cr{family}(n,\epsilon)>0$ such that if $Q\geq D_F^{\Cr{family}}$, then $|\mathfrak{F}_n(Q)|\ll_{n,[F:\Q],\epsilon}D_F^{-n^2}Q^{2n+\epsilon}$.
\end{corollary}
\begin{remark}
It follows as an immediate corollary that if $Q\geq 1$, then $|\mathfrak{F}_n(Q)|\ll_{n,F,\epsilon}Q^{2n+\epsilon}$.
\end{remark}

The bound on $|\mathfrak{F}_n(Q)|$ in \cref{cor:family} was first proved by Brumley, Thorner, and Zaman \cite[Theorem A.1]{BrumleyThornerZaman}.  Both proofs rely crucially on the detailed description of the Euler factors of Rankin--Selberg $L$-functions at ramified prime ideals in \cite[(A.8)]{soundararajan2019weak}, but the two proofs use this very differently.  In \cite{BrumleyThornerZaman}, $\mathfrak{F}_n(Q)$ is decomposed into several disjoint subfamilies based on conductors, central characters, and data that can be read off from the Bernstein--Zelevinsky description of the admissible dual.  The cardinalities of these subfamilies are estimated using Rankin--Selberg theory (including \cite[(A.8)]{soundararajan2019weak}) and sphere packing bounds in high dimensions.  In order to efficiently sum these cardinalities, one invokes a new bound on Rankin--Selberg conductors \cite[Theorem B.1]{BrumleyThornerZaman} due to Bushnell and Henniart.

Contrastingly, in this paper, the description of the ramified Euler factors goes into the proof of \cref{thm:large-sieve-general} (hence \cref{thm:density}).  While our unitary normalization of each $\pi\in\mathfrak{F}_n(Q)$ ensures that $\max_{1\leq j\leq n}|\alpha_{j,\pi}(\kp)|\geq 1$ when $\kp\nmid\kq_{\pi}$, classical analytic number theory shows that if $\pi\in\mathfrak{F}_n(Q)$, then the conductor $\kq_{\pi}$ must be indivisible by a prime ideal of norm $O_{n,F,\epsilon}(Q^{\epsilon})$. The bound in \cref{thm:density} with $\theta=0$ is now used for all $\kp$ with $\N\kp\ll_{n,F,\epsilon} Q^{\epsilon}$.  In particular, our proof does not rely on the aforementioned bound \cite[Theorem B.1]{BrumleyThornerZaman} for Rankin--Selberg conductors.

\begin{remark}
As in \cite{lichtman2024density}, it should be possible to obtain an analogue of \cref{thm:density} for archimedean places $v$, ramified or not, provided that one replaces the analytic conductor $\mathfrak{C}_{\pi}$ with the slightly larger \emph{total conductor} $\mathfrak{C}_{\pi}^{\mathrm{total}}$, as defined in \cite[(3.10)]{lichtman2024density}.  Let $C :=\max_{\pi\in\mathcal{S}}\mathfrak{C}_{\pi}^{\mathrm{total}}$.  Write the set of archimedean Langlands parameters of $\pi$ at $v$ as $\{\mu_{j,\pi}(v)\colon 1\leq j\leq n\}$, and define $\theta_\pi(v) := \max_{1\le j \le n} |\Re(\mu_{j,\pi}(v))|$. When $\theta \ge \tfrac{1}{4}$, an upper bound similar to \eqref{eqn:density} for $\#\{\pi \in \mS : \theta_{\pi}(v) \ge \theta\}$ would follow from the following modification of our large sieve inequality from \cref{thm:large_sieve2}:
\[
\sup_{\|a\|_2 = 1} \sum_{\pi \in \mS} X^{2\theta_{\pi}(v)} \Bigg| \sum_{\N\n \le N} \lambda_\pi(\kn) a(\kn) \Bigg|^2 \ll_{n,[F:\Q]} (NC)^{o(1)} (NX + C^n |\mS|),\qquad X\geq 1.
\]
Relative to \cref{thm:large_sieve2}, the incorporation of $X^{2\theta_{\pi}(v)}$ into the left-hand side is of no cost to the right-hand side when $X \le C^n|\mS| N^{-1}$; this is similar to \cite[Theorems 5 and 6]{deshouillers1982kloosterman}. Such a large sieve inequality should follow from the ideas in \cref{sec:non-def-covers,sec:large-sieve}, using the archimedean case of the local Langlands correspondence to describe the Rankin--Selberg factors at ramified archimedean places.
\end{remark}

\subsection*{Analytic notation}
\label{subsec:notation}
We use the standard notation $f \asymp_\eps g$, $f \ll_\eps g$, $f = O_\eps(g)$, $f = o(g)$ from analytic number theory, where the subscripts indicate that the implicit constants may depend on the parameter $\eps$.  In particular, the statement $f(x) \ll_{\nu} x^{o(1)} g(x)$ is equivalent to the statement that for all $\epsilon>0$, there exists an effectively computable constant $c(\nu,\epsilon)>0$ such that $|f(x)| \leq c(\nu,\epsilon) x^\eps |g(x)|$ in a range of $x$ that will be clear from context.  Throughout, we view $n$, $n_0$, and $[F:\Q]$ as being fixed, so we will allow implied constants to depend on them. We write $(\m, \n)$ and $[\m, \n]$ for the GCD and LCM of two integral ideals.  We write $\mathbb{N}$ for the set of positive integers.

\subsection*{Structure of paper} In \cref{sec:InformalOutline}, we give an informal overview of the main ideas of the paper.  In \cref{sec:L-functions}, we record some key properties of $L$-functions.  In \cref{sec:non-def-covers}, we introduce the notion of positive semi-definite covers for families of $L$-functions. This is  crucial for the statement of our main technical result, \cref{thm:large-sieve-general}, which we prove in \cref{sec:large-sieve}. \cref{thm:large_sieve2} follows quickly from \cref{thm:large-sieve-general}.  In \cref{sec:family_proofs}, we use \cref{thm:large-sieve-general} to prove \cref{thm:density,cor:family}. We prove \cref{thm:ZDE} in \cref{sec:zero-density}, \cref{cor:large_sieve2,thm:2ndMoment} in \cref{sec:holder}, and \cref{thm:LFZDE} in \cref{sec:LFZDE}.

\subsection*{Acknowledgements}
We thank Farrell Brumley, Yujiao Jiang, and James Maynard for helpful discussions. AP is supported by an EPSRC Scholarship. JT is partially supported by the Simons Foundation (MP-TSM-00002484) and the National Science Foundation (DMS-2401311).

\section{Informal outline}
\label{sec:InformalOutline}

Here we give an informal overview of our arguments, ignoring various technical details. Let $n, n_0 \in \mathbb{N}$, $N, Q \ge 1$, $\mS \subseteq \F_n$ be a finite set with $\max_{\pi \in \mS} \mathfrak{C}_{\pi} \le Q$, and $\pi_0 \in \F_{n_0}$.

\subsection{Previous work on $\GL_n$ large sieves}
By the duality principle (see \cite[p. 170]{IK}), one has
\begin{equation}
\label{eqn:dual-large-sieve}
    C(N, \mS) = \sup_{\|w\|_2 = 1}\sum_{\N \n \le N} \Bigg| \sum_{\pi \in \mS} w(\pi) \lambda_\pi(\n) \Bigg|^2,
\end{equation}
where $\|w\|_2 := ( \sum_{\pi \in \mS} |w(\pi)|^2 )^{1/2}$.  Duke and Kowalski \cite[Section 4]{DK} expanded the square in \eqref{eqn:dual-large-sieve}, swapped the order of summation, and applied Mellin inversion, thus obtaining
\begin{equation}
\label{eqn:approximation}
\begin{aligned}
\sum_{\N \n \le N} \Bigg| \sum_{\pi \in \mS} w(\pi) \lambda_\pi(\n) \Bigg|^2
=
    \sum_{\pi, \pi' \in \mS} w(\pi) \overline{w(\pi')} \frac{1}{2\pi i}\int_{3-i\infty}^{3+i\infty}\Bigg(\sum_{\kn}\frac{\lambda_{\pi}(\kn)\overline{\lambda_{\pi'}(\kn)}}{\N\kn^s}\Bigg)\frac{N^s}{s}ds.
\end{aligned}
\end{equation}
It is implicit in their work that if $\theta$ is the best bound towards GRC for each element of $\mathcal{S}$, then there exists a function $H(s,\pi,\pi')$, holomorphic in the region $\re(s)>\tfrac{1}{2}+2\theta$, such that
\[
\sum_{\kn}\frac{\lambda_{\pi}(\kn)\overline{\lambda_{\pi'}(\kn)}}{\N\kn^s} = H(s,\pi,\pi')L(s,\pi\times\widetilde{\pi}').
\]
The basis for the results in \cite{DK} is that this factorization holds inside of the critical strip when $\theta<\tfrac{1}{4}$ (and in particular, when GRC holds).  Now, one pushes the contour as far to the left as progress towards GRC permits, bounding $L(s,\pi\times\widetilde{\pi}')$ in the critical strip using convexity.
It follows from recent work of Jiang \cite{Jiang} that $H(s, \pi, \pi')$ is actually holomorphic in the region $\Re(s) > \tfrac{1}{2}+\theta$, and by inserting this into the approach from \cite{DK}, one arrives at \eqref{eqn:Jiang_large_sieve}.

On the other hand, it follows from the work of Thorner and Zaman \cite[Section 4.2]{TZ_GLn}, which relies on the orthogonality of Schur polynomials evaluated at the sets of local roots $\{\alpha_{j,\pi}(\kp)\}$ and $\{\alpha_{j',\pi'}(\kp)\}$, that for any given $\kn$, the bound
\begin{equation}
\label{eqn:approximation2}
    \Bigg| \sum_{\substack{\pi \in \mS \\ \gcd(\kq_\pi, \kn) = \mO_F}} w(\pi) \lambda_\pi(\n) \Bigg|^2 
   \leq 
    \sum_{\substack{\pi, \pi' \in \mS \\ \gcd(\kq_\pi \kq_{\pi'}, \kn) = \mO_F}} w(\pi) \bar w(\pi) \lambda_{\pi\times\widetilde{\pi}'}(\kn)
\end{equation}
holds without recourse to unproven progress towards GRC.  Summing over all $\kn$ with $\N\kn\leq N$ and applying Mellin inversion, one arrives at
\begin{equation}
\label{eqn:approximation3}
    \sum_{\N \n \le N} \Bigg| \sum_{\substack{\pi \in \mS \\ \gcd(\kq_{\pi}, \kn)=\mathcal{O}_F}} w(\pi) \lambda_\pi(\n) \Bigg|^2 
   \leq 
    \sum_{\pi, \pi' \in \mS} w(\pi) \bar w(\pi) \frac{1}{2\pi i}\int_{3-i\infty}^{3+i\infty}\frac{L(s,\pi\times\widetilde{\pi}')}{\prod_{\kp\mid\kq_{\pi}\kq_{\pi'}}L(s,\pi_{\kp}\times\widetilde{\pi}_{\kp}')}\frac{N^s}{s}ds.\hspace{-1mm}
\end{equation}
One then pushes the contour to the line $\re(s)=(\log N)^{-1}$, bounding $L(s,\pi\times\widetilde{\pi}')$ using \cref{lem:Li1} and the product over $\kp\nmid\kq_{\pi}\kq_{\pi'}$ using the existing progress towards GRC in \eqref{eqn:ramanujan_progress}.  It is this latter contribution that leads to the factor of $Q^{4\theta n^2}$ in \eqref{eqn:TZ_large_sieve}. Building on these arguments, Humphries and Thorner \cite[Theorem 4.1]{humphries2024zeros} proved large sieve inequalities with the same dependency on $\theta$ for the Dirichlet coefficients of $L(s, \pi \times \pi_0)$ and $L^{-1}(s, \pi \times \pi_0)$, the latter being relevant for zero detection. These are the key results behind the zero density estimates in \cref{eqn:TZ_zde,eqn:HT_zde}.

To prove a \emph{log-free} zero density estimate for $\{L(s,\pi)\colon \pi\in\mathcal{S}\}$, Thorner and Zaman \cite{TZ_GLn} inserted Selberg sieve weights into the large sieve (see \cite[Theorem 4.2]{TZ_GLn}). They used this to handle the contribution of unramified prime ideals $\p$, and trivially estimated the contribution from $\{\kp^k\colon \textup{$\kp$ ramified or $k\geq 2$}\}$ using the best progress towards GRC \cite{MS,luo1999generalized}. However, this was insufficient to obtain a log-free zero density estimate for the family $\{L(s,\pi\times\pi_0)\colon \pi\in\mathcal{S}\}$ when $\pi_0$ does not satisfy GRC. The need for GRC would be removed if each $\pi\in\mathcal{S}\cup\{\pi_0\}$ were to satisfy
\begin{equation}
\label{eqn:hyp_BTZ}
\prod_{\kp}\sum_{r=0}^{\infty}\frac{\max_j |\alpha_{j,\pi}(\kp)|^{2r}}{\N\kp^{r(1+\epsilon)}}\ll_{n,[F:\Q],\epsilon}\mathfrak{C}_{\pi}^{\epsilon},
\end{equation}
where $\alpha_{j,\pi}(\kp)$ ranges over the Satake parameters at $\kp$.  This is Hypothesis 1.1 in the work of Brumley, Thorner, and Zaman \cite{BrumleyThornerZaman}, and it is (as of now) only known when $\pi\in\mathfrak{F}_n$ with $n\leq 4$ \cite{Brumley_2}.

\subsection{Our innovations}
We now describe our key new ideas.

\subsubsection{A linear-algebraic approach to handle ramification}
One of our key observations is that the inequality from \eqref{eqn:approximation2} actually holds with no ramification constraints:
\begin{equation}
\label{eqn:key-inequality}
    \Bigg| \sum_{\pi \in \mS} w(\pi) \lambda_\pi(\kn) \Bigg|^2 
    =
    \sum_{\pi, \pi' \in \mS} w(\pi) \bar w(\pi') \lambda_\pi(\kn) \bar\lambda_{\pi}(\kn)
    \le 
    \sum_{\pi, \pi' \in \mS} w(\pi) \bar w(\pi') \lambda_{\pi \times \tilde \pi'}(\kn).
\end{equation}
This allows us to seamlessly pass from the products $\lambda_\pi(\kn) \bar{\lambda_{\pi'}(\kn)}$ to the actual Rankin--Selberg coefficients $\lambda_{\pi\times\tilde{\pi}'}(\kn)$, with no mention of the function $H(s, \pi, \pi')$ of Duke and Kowalski \cite{DK}.

To prove \eqref{eqn:key-inequality} in general, one needs a more careful treatment of the coefficients $\lambda_{\pi\times\tilde\pi'}(\kn)$ when $(\kn,\kq_{\pi}\kq_{\pi'})\neq\mathcal{O}_F$. While an explicit combinatorial argument as in \cite{TZ_GLn} becomes rather cumbersome and difficult to see, a linear-algebraic formulation inspired by \cite{lichtman2024density} will lead to a fairly short proof. In fact, we can reformulate \eqref{eqn:key-inequality} as follows:
\begin{equation} \label{eqn:posdef-claim}
    \lambda_{\pi \times \tilde\pi'}(\kn) - \lambda_\pi(\kn) \bar \lambda_{\pi'}(\kn)
    \qquad \text{ form a \emph{positive semi-definite matrix} in } \mathbb{C}^{\mS \times \mS}.
\end{equation}
Lichtman and Pascadi  \cite[Proposition 4]{lichtman2024density} have previously shown that $\lambda_{\pi \times \tilde\pi'}(\n)$ form a positive semi-definite matrix in $\mathbb{C}^{\mS \times \mS}$ (in the case $F = \Q$), which only means that the right-hand side of \cref{eqn:key-inequality} is nonnegative. Building on their work, we prove a general bound for bilinear sums over $\pi, \pi_0$ of $\lambda_{\pi \times \pi_0}(\kn)$; see \cref{prop:inequality-RS-covers}. When $\pi_0$ is fixed, this reads
\begin{equation} \label{eqn:inequality-RS}
    \Bigg| \sum_{\pi \in \mS} w(\pi) \lambda_{\pi \times \pi_0}(\n) \Bigg|^2
    \le 
    \lambda_{\pi_0 \times \tilde\pi_0}(\n) \sum_{\pi, \pi' \in \mS} w(\pi) \bar w(\pi') \lambda_{\pi \times \tilde\pi'}(\n).
\end{equation}
Taking $\pi_0\in\mathfrak{F}_1$ to be trivial, we recover \cref{eqn:key-inequality}. The dual form of our large sieve inequality from \cref{thm:large_sieve2} now follows by summing over $\kn$, Mellin inversion, and shifting the contour of integration near $\Re(s) = 0$. In particular, this overcomes the barrier at $\Re(s) > \tfrac{1}{2}$ in the approach of Duke and Kowalski \cite{DK}, with no dependency on progress towards GRC as in \cite{TZ_GLn, humphries2024zeros}. This is why \cref{thm:large_sieve2} is supersedes both \cref{eqn:DK_large_sieve_conj} (which assumed GRC) and \cref{eqn:TZ_large_sieve}.

\subsubsection{Handling $L^{-1}$ and $\log L$ via positive semi-definite covers}
A key further step, inspired by \cite{humphries2024zeros}, is to extend the left-hand side of \cref{eqn:inequality-RS} to include Dirichlet coefficients $\lambda^\circ_{\pi \times \pi_0}(\n)$ of other related Dirichlet series, such as $L(s, \pi \times \pi_0)^{-1}$ or $\log L(s, \pi \times \pi_0)$. Although these coefficients do not form a positive semi-definite matrix in $\pi, \pi_0$, they turn out to have $\lambda_{\pi \times \tilde\pi'}$ as a \emph{positive semi-definite cover}, in a sense that we introduce in \cref{sec:non-def-covers}. This leads to the general inequality\footnote{Such a bound is implicit in the work of Humphries--Thorner \cite{humphries2024zeros} when $\lambda^\circ_{\pi \times \pi_0}(\kn)$ are the Dirichlet coefficients $L(s, \pi \times \pi_0)^{-1}$ and $(\kn, \kq_\pi \kq_{\pi_0}) = \mO_F$, but failing to handle ramified primes incurs significant losses without GRC.}
\begin{equation} \label{eqn:inequality-RS-covers}
    \Bigg| \sum_{\pi \in \mS} w(\pi) \lambda^\circ_{\pi \times \pi_0}(\n) \Bigg|^2
    \le 
    \lambda_{\pi_0 \times \tilde\pi_0}(\n) \sum_{\pi, \pi' \in \mS} w(\pi) \bar w(\pi') \lambda_{\pi \times \tilde\pi'}(\n).
\end{equation}
The resulting large sieve inequality for $\lambda^\circ_{\pi \times \pi_0}(\n)$ is given in \eqref{eqn:large-sieve-general} of \cref{thm:large-sieve-general}. Our proof of \cref{thm:ZDE} relies on inserting this result, with $\lambda^{\circ}_{\pi\times\pi_0}(\kn)$ equal to the $\kn$-th Dirichlet coefficient of $L(s,\pi\times\pi_0)^{-1}$, into the zero detection arguments of Humphries and Thorner \cite[Section 5]{humphries2024zeros}.

Another application is the density estimate for Langlands parameters from \cref{thm:density}. Specifically, we apply \cref{thm:large-sieve-general} for the coefficients of $\log L(s, \pi \times \tilde\pi')$, and $a(\n)$ supported on a single $\n = \p^k$. This improves the approach from \cite{lichtman2024density} in two ways. Firstly, our framework of positive semi-definite covers removes the ramification constraints at the place $\p$.  Secondly, shifting the contour far to the left (see \cref{eqn:large-sieve-general_2}) yields better bounds for $\theta < \tfrac{1}{4}$. As mentioned before, our proof of \cref{thm:density} is intertwined with that of the upper bound for the size of the universal family from \cref{cor:family}.

\subsubsection{Handling $L'/L$: Small and large prime powers}
The application to the \emph{log-free} zero density estimates from \cref{thm:LFZDE} requires working with the Dirichlet coefficients of $-\frac{L'}{L}(s, \pi \times \pi_0)$. These are closely related to the coefficients of $\log L(s, \pi \times \pi_0)$ (for which the argument above applies), but differ by a logarithmic factor, which we cannot afford to lose. 

To handle the family $\{L(s,\pi\times\pi_0)\colon\pi\in\mathcal{S}\}$ unconditionally, we prove a version of \eqref{eqn:inequality-RS-covers} with Selberg sieve weights inserted (see \eqref{eqn:large-sieve-general-sifted}).  This is inspired by the work of Thorner and Zaman \cite{TZ_GLn}, but our work alleviates the need to handle the ramified primes differently. The pre-sifted large sieve from \eqref{eqn:large-sieve-general-sifted} is enough to handle small powers of primes, but we trivially estimate the contribution of powers $\kp^k$ with $k\gg n^2$ using pointwise bounds towards GRC. This choice is inspired by the work of Soundararajan and Thorner \cite[Proof of Theorem 2.4]{soundararajan2019weak}, which addresses the case where $|\mathcal{S}|=1$. These ideas lead to \cref{cor:ls-log-derivative}, which is our key ingredient towards \cref{thm:LFZDE}.

Once \cref{cor:ls-log-derivative} is proved, we apply it to the zero detection process for $L$-functions that uses Tur{\'a}n's lower bound for power sums.  So far, this seems to be the only method whereby one can prove log-free zero density estimates for automorphic or Rankin--Selberg $L$-functions without recourse to unproven progress towards GRC. Since the zero density estimates in \cref{thm:ZDE} (not log-free) have the best known exponents given their level of generality, we take the time here to numerically sharpen the rather large exponents that occur in \cite{BrumleyThornerZaman,soundararajan2019weak,TZ_GLn} and thus \cref{thm:LFZDE}.

\subsubsection{The H\"older step: Trouble with common divisors} 
As indicated before, the duality approach to the $\GL_n$ large sieve has mostly been useful for large ranges of $N$. One can sometimes handle smaller ranges of $N$ by combining large sieves with H{\"o}lder's inequality, as in \cite[Section 21]{FI} and \cite[Section 5.2]{LOSmith}. However, in those settings, the large sieve concerns characters that are known to satisfy GRC, or (as in \cite{LOSmith}) the coefficients $a(\kn)$ are supported on squarefree or prime ideals.

In \cref{cor:large_sieve2}, we refine \cref{thm:large_sieve2} using H{\"o}lder's inequality {\it without any hypotheses} towards GRC or the support of $a(\kn)$. The goal is to bound $C(N,\mS)$ (and similarly $C(N,\mS,\pi_0)$) via the higher moment
\[
    \sum_{\pi \in \mS} \Bigg\vert \sum_{\N\kn \le N} \lambda_\pi(\kn) a(\kn) \Bigg\vert^{2k}
    =
    \sum_{\pi \in \mS} \Bigg\vert \sum_{\N\kn_1, \ldots, \N\kn_k \le N} \prod_{j=1}^k \lambda_\pi(\kn_j) a(\kn_j) \Bigg\vert^2.
\]
Morally, one can join together $\lambda_\pi(\kn_j)$ for $1 \le j \le k$ via the multiplicativity of Dirichlet coefficients to produce a single sum of length $N^k$, and then apply \cref{thm:large_sieve2}. However, executing this argument unconditionally turns out to be surprisingly subtle. One needs to consider various common divisors $\kd$ of $\kn_1, \ldots, \kn_k$, which is problematic without assuming GRC due to the potentially-large values of $\lambda_{\pi}(\kd)$. 
When $k = 2$, it is natural to try to separate $\kd = (\kn_1, \kn_2)$, but this approach seems to fail when the ideals $\kn_j$ are not squarefree. Instead, we factor $\kn_1 = \kd_1 \km_1$ where $\km_1$ contains only the primes that occur with a greater power in $\kn_1$ than in $\kn_2$. Factoring $\kn_2 = \kd_2 \km_2$ similarly, we obtain $\km_1\km_2 = [\kn_1\kn_2]$ and thus $\kd_1, \kd_2 \mid \km_1\km_2$, which is crucial for absorbing $\kd_1, \kd_2$ via a divisor bound later on. Our proof requires a more general combinatorial decomposition of $\kn_1, \ldots, \kn_k$ (see \cref{lem:decompose-ideals}), coupled with some careful applications of the Cauchy--Schwarz inequality, and the $\ell^2$ control of the coefficients $\lambda_\pi(\kd)$ (respectively, $\lambda_{\pi_0 \times \tilde\pi_0}(\kd)^{-1/2} \lambda_{\pi \times \pi_0}(\kd)$). We ultimately need to replace the norm $\|a\|_2$ from $C(N,\mS)$ with the larger quantity $N^{1/2}\|a\|_\infty$.  Fortunately, in most applications, the values $a(\kn)$ have roughly constant size on dyadic intervals, so this is not an important loss. This is the case in \cref{thm:2ndMoment}, which follows from \cref{cor:large_sieve2} and an approximate functional equation.

\section{Properties of \texorpdfstring{$L$}{L}-functions}
\label{sec:L-functions}


We recall some facts about $L$-functions of automorphic and Rankin--Selberg $L$-functions.

\subsection{Standard \texorpdfstring{$L$}{L}-functions}

Given $\pi\in\mathfrak{F}_n$, let $\widetilde{\pi}\in\mathfrak{F}_n$ be the contragredient representation and $\kq_{\pi}$ be the conductor of $\pi$.  We express $\pi$ as a restricted tensor product $\bigotimes_v \pi_v$ of smooth admissible representations of $\GL_n(F_v)$, where $v$ varies over places of $F$.  When $v$ is a non-archimedean place corresponding with a prime ideal $\kp$, then the local $L$-function $L(s,\pi_{\kp})$ is defined in terms of the Langlands parameters $A_{\pi}(\kp)=\{\alpha_{1,\pi}(\kp),\ldots,\alpha_{n,\pi}(\kp)\}$ by
\begin{equation}
	\label{eqn:Euler_p_single}
	L(s,\pi_{\kp})=\prod_{j=1}^{n}(1-\alpha_{j,\pi}(\kp)\N\kp^{-s})^{-1}=\sum_{k=0}^{\infty}\frac{\lambda_{\pi}(\kp^k)}{\N\kp^{ks}}.
\end{equation}
If $\kp\nmid\kq_{\pi}$, then $\alpha_{j,\pi}(\kp)\neq0$ for all $j$.  If $\kp \mid \kq_{\pi}$, then there might exist $j$ such that $\alpha_{j,\pi}(\kp)=0$.  The standard $L$-function $L(s,\pi)$ associated to $\pi$ is
\[
L(s,\pi)=\prod_{\kp} L(s,\pi_{\kp})=\sum_{\kn}\frac{\lambda_{\pi}(\kn)}{\N\kn^s}.
\]
The Euler product and Dirichlet series converge absolutely when $\re(s)>1$.

At each archimedean place $ v$ of $F$, there are $n$ Langlands parameters $\mu_{j,\pi}(v)\in\mathbb{C}$ such that
\[
L(s,\pi_{\infty}) = \prod_{v|\infty}\prod_{j=1}^{n}\Gamma_{v}(s+\mu_{j,\pi}(v)),\qquad \Gamma_{v}(s)= \begin{cases}
	\pi^{-s/2}\Gamma(s/2)&\mbox{if $F_{v}=\R$,}\\
	2(2\pi)^{-s}\Gamma(s)&\mbox{if $F_{v}=\mathbb{C}$.}
\end{cases}
\]
By combining the work in \cite{blomer2011ramanujan,LRS,MS}, we know that there exists
\begin{equation}
\label{eqn:ramanujan_progress}
\theta_n \in\Big[0,\frac{1}{2}-\frac{1}{n^2+1}\Big]
\end{equation}
such that
\begin{equation}
\label{eqn:LRS_finite}
	|\alpha_{j,\pi}(\kp)|\leq  \N\kp^{\theta_n}\qquad\textup{ and }\qquad\re(\mu_{j,\pi}(v))\geq -\theta_n.
\end{equation}
GRC asserts that in \eqref{eqn:ramanujan_progress}, one may take $\theta_n=0$.  We have $\kq_{\pi}=\kq_{\widetilde{\pi}}$, and for each $\kp$ and each $ v$, we have the equalities of sets $\{\alpha_{j,\widetilde{\pi}}(\kp)\}=\{\overline{ \alpha_{j,\pi}(\kp)}\}$ and $\{\mu_{j,\widetilde{\pi}}(v)\}=\{\overline{\mu_{j,\pi}(v)}\}$.

Let $r_{\pi}$ be the order of the pole of $L(s,\pi)$ at $s=1$.  The completed $L$-function
\[
\Lambda(s,\pi) = (s(1-s))^{r_{\pi}}(D_F^n \N\kq_{\pi})^{s/2}L(s,\pi)L(s,\pi_{\infty})
\]
is entire of order 1.  There exists a complex number $W(\pi)$ of modulus 1 such that if $s\in\mathbb{C}$, then the functional equation $\Lambda(s,\pi)=W(\pi)\Lambda(1-s,\widetilde{\pi})$ holds.  Let $d(v)=1$ if $F_{v}=\R$ and $d(v)=2$ if $F_{v}=\mathbb{C}$.  The analytic conductor of $\pi$ \cite{IS} is given by
\begin{equation}
\label{eqn:analytic_conductor_def}
\mathfrak{C}_{\pi}(it)= D_F^n \N\kq_{\pi}\prod_{v|\infty}\prod_{j=1}^n(3+|it+\mu_{j,\pi}(v)|^{d(v)}),\qquad \mathfrak{C}_{\pi}= \mathfrak{C}_{\pi}(0).
\end{equation}
Since $\Lambda(s,\pi)$ is entire of order 1, there exist complex numbers $a_{\pi}$ and $b_{\pi}$ such that
\[
\Lambda(s,\pi)=e^{a_{\pi}+b_{\pi}s}\prod_{\Lambda(\rho,\pi)=0}\Big(1-\frac{s}{\rho}\Big)e^{s/\rho}.
\]
The zeros $\rho$ in the above Hadamard product are the nontrivial zeros of $L(s,\pi)$, and the zeros of $L(s,\pi)$ that arise as poles of $s^{r_{\pi}}L(s,\pi_{\infty})$ are the trivial zeros.

\subsection{Rankin--Selberg \texorpdfstring{$L$}{L}-functions}
\label{subsec:RS}

Let $\pi\in\mathfrak{F}_n$ and $\pi'\in\mathfrak{F}_{n'}$.  At each prime ideal $\kp$, Jacquet, Piatetski-Shapiro, and Shalika \cite{JPSS} associate to $\pi_{\kp}$ and $\pi_{\kp}'$ a local Rankin--Selberg $L$-function
\begin{equation}
\label{eqn:RS_Dirichlet_series}
L(s,\pi_{\kp}\times\pi_{\kp}')=\prod_{j=1}^{n}\prod_{j'=1}^{n'}(1-\alpha_{j,j',\pi\times\pi'}(\kp) \N\kp^{-s})^{-1}=\sum_{k=0}^{\infty}\frac{\lambda_{\pi\times\pi'}(\kp^k)}{\N\kp^{ks}}
\end{equation}
and a local conductor $\kq_{\pi_{\kp}\times\pi_{\kp}'}$.  If $\kp\nmid \kq_{\pi}\kq_{\pi'}$, then we have the equality of sets
\begin{equation}
\label{eqn:separate_dirichlet_coeffs}
\{\alpha_{j,j',\pi\times\pi'}(\kp)\}=\{\alpha_{j,\pi}(\kp)\alpha_{j',\pi'}(\kp)\}.
\end{equation}
The Rankin--Selberg $L$-function $L(s,\pi\times\pi')$ associated to $\pi$ and $\pi'$ and its arithmetic conductor are
\[
L(s,\pi\times\pi')=\prod_{\kp}L(s,\pi_{\kp}\times\pi_{\kp}')=\sum_{\kn}\frac{\lambda_{\pi\times\pi'}(\kn)}{\N\kn^s},\qquad \kq_{\pi\times\pi'}=\prod_{\kp}\kq_{\pi_{\kp}\times\pi_{\kp}'}.
\]
At an archimedean place $v$ of $F$, Jacquet, Piatetski-Shapiro, and Shalika associate $n'n$ complex Langlands parameters $\mu_{j,j',\pi\times\pi'}(v)$ to $\pi_v$ and $\pi_v'$, from which one defines
\begin{align*}
L(s,\pi_{\infty}\times\pi_{\infty}') = \prod_{v|\infty}\prod_{j=1}^{n}\prod_{j'=1}^{n'}\Gamma_{v}(s+\mu_{j,j',\pi\times\pi'}(v))=\prod_{j=1}^{n'n[F:\Q]}\Gamma_{\R}(s+\mu_{\pi\times\pi'}(j)).
\end{align*}
Using the explicit descriptions of $\alpha_{j,j',\pi\times\pi'}(\kp)$ and $\mu_{j,j',\pi\times\pi'}(v)$ in \cite{humphries2019standard,soundararajan2019weak}, one sees that
\begin{equation}
\label{eqn:LRS_2}
|\alpha_{j,j',\pi\times\pi'}(\kp)|\leq\N\kp^{\theta_n + \theta_{n'}},\qquad \re(\mu_{\pi\times\pi'}(j))\geq -\theta_n - \theta_{n'}.
\end{equation}

Let $r_{\pi\times\pi'} = -\mathrm{ord}_{s=1}L(s,\pi\times\pi')$.  By our normalization for the central characters of $\pi$ and $\pi'$, we have that $r_{\pi\times\pi'}=0$ if and only if $\pi\neq \widetilde{\pi}'$, and $r_{\pi\times\widetilde{\pi}}=1$ otherwise.  The completed $L$-function
\begin{equation}
\label{eqn:Lambdaspixpi'}
\Lambda(s,\pi\times\pi')=(s(s-1))^{r_{\pi\times\pi'}}(D_F^{n'n}\N\kq_{\pi\times\pi'})^{s/2}L(s,\pi\times\pi')L(s,\pi_{\infty}\times\pi_{\infty}')
\end{equation}
is entire of order 1, and there exists a complex number $W(\pi\times\pi')$ of modulus 1 such that $\Lambda(s,\pi\times\pi')$ satisfies the functional equation $\Lambda(s,\pi\times\pi')=W(\pi\times\pi')\Lambda(1-s,\widetilde{\pi}\times\widetilde{\pi}')$.  As with $L(s,\pi)$, the analytic conductor of $L(s,\pi\times\pi')$ is given by
\begin{equation}
\label{eqn:analytic_conductor_def_2}
\mathfrak{C}_{\pi\times\pi'}(it)= D_F^{n'n}\N\kq_{\pi\times\pi'}\prod_{v|\infty}\prod_{j=1}^n \prod_{j'=1}^{n'}(3+|it+\mu_{j,j',\pi\times\pi'}(v)|^{d(v)}),\qquad \mathfrak{C}_{\pi\times\pi'}= \mathfrak{C}_{\pi\times\pi'}(0).
\end{equation}
The combined work of Bushnell and Henniart \cite{BH} and Wattanawanichkul \cite[Lemma A.1]{Wattanawanichkul} (see also \cite[Appendix]{humphries2019standard}) yields
\begin{equation}
\label{eqn:BH}
\mathfrak{C}_{\pi\times\pi'}(it)\ll \mathfrak{C}_{\pi\times\pi'}(3+|t|)^{[F:\Q] n'n},\qquad \mathfrak{C}_{\pi\times\pi'}\ll D_F^{-n'n}\mathfrak{C}_{\pi}^{n'}\mathfrak{C}_{\pi'}^{n}.
\end{equation}
Since $\Lambda(s,\pi\times\pi')$ is entire of order 1, there exist complex numbers $a_{\pi\times\pi'}$ and $b_{\pi\times\pi'}$ such that the Hadamard factorization
\begin{equation}
\label{eqn:hadamard}
\Lambda(s,\pi\times\pi')=e^{a_{\pi\times\pi'}+b_{\pi\times\pi'}s}\prod_{\Lambda(\rho,\pi\times\pi')=0}\Big(1-\frac{s}{\rho}\Big)e^{s/\rho}
\end{equation}
holds.  The zeros $\rho$ in \eqref{eqn:hadamard} are the nontrivial zeros of $L(s,\pi\times\pi')$, and the zeros of $L(s,\pi\times\pi')$ that arise as poles of $s^{r_{\pi\times\pi'}}L(s,\pi_{\infty}\times\pi_{\infty}')$ are the trivial zeros.

\begin{lemma}[{\cite[Lemma 3.2]{HarcosThorner}}]
\label{lem:Li1}
For $(\pi,\pi')\in\mathfrak{F}_n\times\mathfrak{F}_{n'}$, consider the holomorphic function
\[
\mathfrak{L}(s,\pi\times\pi') := \lim_{w\to s}\Big(\frac{w-1}{w+1}\Big)^{r_{\pi\times\pi'}}L(w,\pi\times\pi'),\qquad\Re(s)>-1.
\]
If $j\geq 0$, $\sigma\geq 0$, $t\in\R$, and $\epsilon>0$, then
\begin{equation}
\label{eqn:Lbound1}
\mathfrak{L}^{(j)}(\sigma+it,\pi\times\pi')\ll_{j,\epsilon}\mathfrak{C}_{\pi\times\pi'}(it)^{\max(1-\sigma,0)/2+\epsilon}.
\end{equation}
If $\sigma< 0$, $t\in\R$, and $\epsilon>0$, then
\begin{equation}
\label{eqn:Lbound2}
\mathfrak{L}(\sigma+it,\pi\times\pi')\ll_{\sigma,\epsilon}\mathfrak{C}_{\pi\times\pi'}(it)^{\frac{1}{2}-\sigma+\epsilon}.
\end{equation}
\end{lemma}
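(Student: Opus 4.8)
The plan is to prove \cref{lem:Li1} by a standard convexity (Phragm\'en--Lindel\"of) argument, together with Cauchy's integral formula for the derivative bounds. First note that $\mathfrak{L}(s,\pi\times\pi')$ is holomorphic in the half-plane $\Re(s)>-1$: the factor $((w-1)/(w+1))^{r_{\pi\times\pi'}}$ cancels the possible simple pole of $L(w,\pi\times\pi')$ at $w=1$ (recall $r_{\pi\times\pi'}\in\{0,1\}$) and only introduces a pole at $w=-1$, outside the region. Fix a small $\delta>0$, to be absorbed into $\epsilon$ at the end. I will bound $\mathfrak{L}$ on the two vertical lines $\Re(s)=1+\delta$ and $\Re(s)=-\delta$, interpolate, and then differentiate.

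On the line $\Re(s)=1+\delta$, absolute convergence of the Rankin--Selberg Dirichlet series from \eqref{eqn:RS_Dirichlet_series} gives $|\mathfrak{L}(s,\pi\times\pi')|\ll_{\delta}\sum_{\kn}|\lambda_{\pi\times\pi'}(\kn)|\,\N\kn^{-1-\delta}$; using the pointwise Rankin--Selberg comparison $|\lambda_{\pi\times\pi'}(\kn)|\le\lambda_{\pi\times\widetilde\pi}(\kn)^{1/2}\lambda_{\pi'\times\widetilde{\pi}'}(\kn)^{1/2}$, Cauchy--Schwarz, and standard mean-value bounds for the nonnegative coefficients $\lambda_{\pi\times\widetilde\pi}(\kn)$, one obtains $|\mathfrak{L}(1+\delta+it,\pi\times\pi')|\ll_{\delta,\epsilon}\mathfrak{C}_{\pi\times\pi'}^{\epsilon}\le\mathfrak{C}_{\pi\times\pi'}(it)^{\epsilon}$, matching the target exponent $\max(1-\sigma,0)/2=0$ there. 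For $\Re(s)=-\delta$ (more generally any fixed $\sigma<0$), I invoke the functional equation from \eqref{eqn:Lambdaspixpi'}: since the polynomial prefactors $(s(s-1))^{r_{\pi\times\pi'}}$ are invariant under $s\mapsto 1-s$, it rearranges to
\[
\mathfrak{L}(s,\pi\times\pi')=W(\pi\times\pi')\Big(\frac{s-1}{s+1}\Big)^{r_{\pi\times\pi'}}(D_F^{n'n}\N\kq_{\pi\times\pi'})^{\frac12-s}\,\frac{L(1-s,\widetilde\pi_{\infty}\times\widetilde\pi_{\infty}')}{L(s,\pi_{\infty}\times\pi_{\infty}')}\,L(1-s,\widetilde\pi\times\widetilde\pi').
\]
On $\Re(s)=\sigma<0$ the last Dirichlet series is $\ll_{\sigma,\epsilon}\mathfrak{C}_{\pi\times\pi'}^{\epsilon}$ by the previous estimate (using $\kq_{\widetilde\pi\times\widetilde{\pi}'}=\kq_{\pi\times\pi'}$), the factor $((s-1)/(s+1))^{r_{\pi\times\pi'}}$ is $O_{\sigma}(1)$, and Stirling's formula applied to the ratio of archimedean factors gives a contribution $\asymp_{\sigma}\prod_{v\mid\infty}\prod_{j,j'}(3+|it+\mu_{j,j',\pi\times\pi'}(v)|^{d(v)})^{\frac12-\sigma}$; multiplying by $(D_F^{n'n}\N\kq_{\pi\times\pi'})^{\frac12-\sigma}$ and comparing with \eqref{eqn:analytic_conductor_def_2} yields $\mathfrak{L}(\sigma+it,\pi\times\pi')\ll_{\sigma,\epsilon}\mathfrak{C}_{\pi\times\pi'}(it)^{\frac12-\sigma+\epsilon}$, which is \eqref{eqn:Lbound2}.

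For $0\le\sigma\le 1+\delta$ I interpolate between these two lines by Phragm\'en--Lindel\"of. The function $\mathfrak{L}(s,\pi\times\pi')$ is of finite order in the strip $-\delta\le\Re(s)\le 1+\delta$ (the functional equation plus Stirling give a crude bound $\ll_{\pi\times\pi'}(1+|t|)^{A}$ there with $A=A(n,n')$, far below the admissible growth threshold), so the principle applies. Combining the two boundary bounds $\mathfrak{C}_{\pi\times\pi'}(it)^{\epsilon}$ and $\mathfrak{C}_{\pi\times\pi'}(it)^{1/2+\delta+\epsilon}$ — and controlling the $t$-dependence of $\mathfrak{C}_{\pi\times\pi'}(it)$ via \eqref{eqn:BH} — yields, for $\sigma\in[0,1]$, the bound $\mathfrak{C}_{\pi\times\pi'}(it)^{(1-\sigma)/2+O(\delta)+\epsilon}$; taking $\delta$ small enough (and, for $\sigma>1$, also using absolute convergence to the right) proves \eqref{eqn:Lbound1} in the case $j=0$. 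The derivative bounds then follow from Cauchy's formula on a circle $|w-(\sigma+it)|=r$ with $r:=(\log(\mathfrak{C}_{\pi\times\pi'}(it)+3))^{-1}$: this circle lies in $\Re(w)>-1$ for $\sigma\ge 0$, on it $\max(1-\Re w,0)/2\le\max(1-\sigma,0)/2+r/2$, and $\mathfrak{C}_{\pi\times\pi'}(iw)\asymp\mathfrak{C}_{\pi\times\pi'}(it)$ since shifting $t$ by $|u|\le 1$ alters each factor $3+|it+\mu|^{d(v)}$ by a bounded multiplicative amount; hence $\mathfrak{L}^{(j)}(\sigma+it,\pi\times\pi')\ll_{j}j!\,r^{-j}\max_{|w-(\sigma+it)|=r}|\mathfrak{L}(w,\pi\times\pi')|\ll_{j,\epsilon}\mathfrak{C}_{\pi\times\pi'}(it)^{\max(1-\sigma,0)/2+\epsilon}$, completing \eqref{eqn:Lbound1}.

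The step I expect to be the main obstacle is making the boundary estimate on $\Re(s)=1+\delta$ and the Stirling asymptotics on $\Re(s)<0$ fully uniform in the conductor $\mathfrak{C}_{\pi\times\pi'}$ (not just in $t$), and then tracking that uniformity correctly through Phragm\'en--Lindel\"of; here the conductor bounds in \eqref{eqn:BH}, both for $\mathfrak{C}_{\pi\times\pi'}(it)$ in terms of $\mathfrak{C}_{\pi\times\pi'}$ and for $\mathfrak{C}_{\pi\times\pi'}$ in terms of $\mathfrak{C}_{\pi}$ and $\mathfrak{C}_{\pi'}$, are what make the argument go through.
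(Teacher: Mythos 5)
The paper itself does not prove this lemma --- it is quoted verbatim from [HarcosThorner, Lemma~3.2] --- so I am judging your argument against the standard proof of that result, which is indeed the convexity argument you outline. Your architecture (holomorphy of $\mathfrak{L}$ in $\Re(s)>-1$, a bound at the edge of the critical strip, the functional equation plus Stirling for $\sigma<0$, Phragm\'en--Lindel\"of in between, and Cauchy's formula on a circle of radius $(\log(\mathfrak{C}_{\pi\times\pi'}(it)+3))^{-1}$ for the derivatives) is correct, and the $\sigma<0$ and derivative steps are handled properly.

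There is, however, one genuine gap, and it sits exactly where you predicted trouble but you point to the wrong tool to fix it. Your estimate on $\Re(s)=1+\delta$ via $|\lambda_{\pi\times\pi'}(\kn)|\le\lambda_{\pi\times\tilde\pi}(\kn)^{1/2}\lambda_{\pi'\times\tilde\pi'}(\kn)^{1/2}$, Cauchy--Schwarz, and mean-value bounds for the nonnegative coefficients (e.g.\ \cref{lem:mertens}) yields $\ll_{\delta,\epsilon}(\mathfrak{C}_{\pi}\mathfrak{C}_{\pi'})^{\epsilon}$, because the mean values of $\lambda_{\pi\times\tilde\pi}$ are controlled by $\mathfrak{C}_{\pi}$, not by $\mathfrak{C}_{\pi\times\pi'}$. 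You then silently replace $(\mathfrak{C}_{\pi}\mathfrak{C}_{\pi'})^{\epsilon}$ by $\mathfrak{C}_{\pi\times\pi'}^{\epsilon}$. With an implied constant that must be independent of $\pi$ and $\pi'$, this step requires a \emph{lower} bound for the Rankin--Selberg conductor of the shape $\mathfrak{C}_{\pi}\mathfrak{C}_{\pi'}\ll\mathfrak{C}_{\pi\times\pi'}^{O_{n,n'}(1)}$ (valid thanks to the paper's normalization of central characters, which rules out degeneracies such as unramified twists). This does not follow from \eqref{eqn:BH}, the inequality you invoke for uniformity: \eqref{eqn:BH} bounds $\mathfrak{C}_{\pi\times\pi'}$ from \emph{above} by $\mathfrak{C}_{\pi}^{n'}\mathfrak{C}_{\pi'}^{n}$, i.e.\ it goes in the wrong direction, and because of conductor dropping the reverse inequality is a separate, nontrivial theorem (the Bushnell--Henniart lower bound recorded as [BTZ, Theorem~B.1], together with its archimedean analogue). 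Without this input, your argument only proves the lemma with $\mathfrak{C}_{\pi\times\pi'}(it)$ replaced by $\mathfrak{C}_{\pi}\mathfrak{C}_{\pi'}(3+|t|)^{O(1)}$, which is strictly weaker whenever the Rankin--Selberg conductor drops. Once that lower bound is supplied, the rest of your proof goes through.
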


\begin{lemma}[{\cite[Lemma 3.1]{humphries2024zeros}}]
\label{lem:mertens}
	If $\pi\in\mathfrak{F}_n$, $X\geq 3$, and $\epsilon>0$, then $\sum_{\N\kn\leq X}\frac{\lambda_{\pi\times\widetilde{\pi}}(\kn)}{\N\kn}\ll_{\epsilon} \mathfrak{C}_{\pi}^{\epsilon}\log X$.
\end{lemma}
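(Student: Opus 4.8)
The plan is to prove this Mertens-type bound by Rankin's trick, using the non-negativity of the Dirichlet coefficients of $L(s,\pi\times\widetilde{\pi})$ to replace the sharp cutoff $\N\kn\leq X$ by a smooth weight, and thereby reduce the whole statement to a bound for $L(s,\pi\times\widetilde{\pi})$ at a point just to the right of the line $\re(s)=1$ --- where \cref{lem:Li1} applies with \emph{no} convexity loss, keeping the conductor dependence down to $\mathfrak{C}_{\pi}^{\epsilon}$.

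First I would record the input that makes everything work: $\lambda_{\pi\times\widetilde{\pi}}(\kn)\geq 0$ for every ideal $\kn$. At prime ideals $\kp\nmid\kq_{\pi}$ this follows from \eqref{eqn:separate_dirichlet_coeffs}, since $\lambda_{\pi\times\widetilde{\pi}}(\kp^{k})$ is then the $k$-th complete homogeneous symmetric function of the numbers $\alpha_{j,\pi}(\kp)\overline{\alpha_{j',\pi}(\kp)}$, which is a non-negative rational combination of the power sums $\sum_{j,j'}(\alpha_{j,\pi}(\kp)\overline{\alpha_{j',\pi}(\kp)})^{r}=|\sum_{j}\alpha_{j,\pi}(\kp)^{r}|^{2}\geq 0$; at the finitely many remaining $\kp$ it follows from the explicit description of the ramified Rankin--Selberg Euler factors. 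This is the one genuinely non-formal ingredient, and it is the natural candidate for the "main obstacle": everything after it is routine.

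Next, set $\eta:=1/\log X$, so that $0<\eta\leq 1/\log 3<1$ and $X^{\eta}=e$. Since $(X/\N\kn)^{\eta}\geq 1$ whenever $\N\kn\leq X$, the non-negativity above gives
\[
\sum_{\N\kn\leq X}\frac{\lambda_{\pi\times\widetilde{\pi}}(\kn)}{\N\kn}
\;\leq\;
\sum_{\N\kn\leq X}\frac{\lambda_{\pi\times\widetilde{\pi}}(\kn)}{\N\kn}\Big(\frac{X}{\N\kn}\Big)^{\eta}
\;\leq\;
X^{\eta}\sum_{\kn}\frac{\lambda_{\pi\times\widetilde{\pi}}(\kn)}{\N\kn^{1+\eta}}
\;=\;
e\,L(1+\eta,\pi\times\widetilde{\pi}),
\]
the last step being valid because the Dirichlet series of $L(s,\pi\times\widetilde{\pi})$ converges absolutely for $\re(s)>1$. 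It then remains to bound $L(1+\eta,\pi\times\widetilde{\pi})$. Since $r_{\pi\times\widetilde{\pi}}=1$ we have $L(1+\eta,\pi\times\widetilde{\pi})=\tfrac{2+\eta}{\eta}\,\mathfrak{L}(1+\eta,\pi\times\widetilde{\pi})$, and \eqref{eqn:Lbound1} of \cref{lem:Li1} with $j=0$, $\sigma=1+\eta\geq 1$, $t=0$ (so $\max(1-\sigma,0)=0$) gives $\mathfrak{L}(1+\eta,\pi\times\widetilde{\pi})\ll_{\epsilon}\mathfrak{C}_{\pi\times\widetilde{\pi}}^{\epsilon}$. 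Combining this with the second bound in \eqref{eqn:BH} and $\mathfrak{C}_{\widetilde{\pi}}=\mathfrak{C}_{\pi}$ (whence $\mathfrak{C}_{\pi\times\widetilde{\pi}}\ll D_F^{-n^{2}}\mathfrak{C}_{\pi}^{2n}\leq\mathfrak{C}_{\pi}^{2n}$) and rescaling $\epsilon$, one gets $\mathfrak{L}(1+\eta,\pi\times\widetilde{\pi})\ll_{\epsilon}\mathfrak{C}_{\pi}^{\epsilon}$; since $\tfrac{2+\eta}{\eta}\ll\log X$, this yields $L(1+\eta,\pi\times\widetilde{\pi})\ll_{\epsilon}\mathfrak{C}_{\pi}^{\epsilon}\log X$, and the claim follows from the displayed inequality.

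In summary, beyond the non-negativity of $\lambda_{\pi\times\widetilde{\pi}}$ there is nothing hard here; the only point worth flagging is that it is essential to evaluate $L$ at a point with $\re(s)\geq 1$, where the absolutely convergent Dirichlet series can be summed directly and where \cref{lem:Li1} costs no power of the conductor. Pushing the evaluation into the critical strip would reintroduce exactly the sort of $\mathfrak{C}_{\pi}$-dependence this lemma is meant to avoid.
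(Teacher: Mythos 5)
Your proof is correct, and it is essentially the argument behind the result as cited: the paper does not reprove this lemma but imports it from \cite[Lemma 3.1]{humphries2024zeros}, whose proof is exactly this combination of the non-negativity of $\lambda_{\pi\times\widetilde{\pi}}(\kn)$, Rankin's trick at $\eta=1/\log X$, and the bound $L(1+\eta,\pi\times\widetilde{\pi})\ll_{\epsilon}\eta^{-1}\mathfrak{C}_{\pi\times\widetilde{\pi}}^{\epsilon}$ coming from \cref{lem:Li1} and \eqref{eqn:BH}. The only cosmetic quibble is that $h_k$ is a non-negative rational combination of \emph{products} of power sums (not of the power sums themselves), and the ramified-prime non-negativity you invoke is precisely what \eqref{eqn:log-non-def} in \cref{sec:non-def-covers} supplies, so you could cite that directly.
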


Define the numbers $\mu_{\pi\times\pi'}(\kn)$ and $\Lambda_{\pi\times\pi'}(\kn)$ by the Dirichlet series identities
\[
\sum_{\kn}\frac{\mu_{\pi\times\pi'}(\kn)}{\N\kn^s}=\frac{1}{L(s,\pi\times\pi')}=\prod_{\kp}\prod_{j=1}^n\prod_{j'=1}^{n'}(1-\alpha_{j,j',\pi\times\pi'}(\kp)\N\kp^{-s}),\qquad \re(s)>1
\]
and
\[
\sum_{\ka}\frac{\Lambda_{\pi\times\pi'}(\ka)}{\N\ka^s} = -\frac{L'}{L}(s,\pi\times\pi')  = \sum_{\kp}\sum_{\ell=1}^{\infty}\frac{\sum_{j=1}^n \sum_{j'=1}^{n'}\alpha_{j,j',\pi\times\pi'}(\kp)^{\ell}\log\N\kp}{\N\kp^{\ell s}},\qquad \re(s)>1.
\]
From the latter, we obtain the identity
\begin{align*}
\log L(s,\pi\times\pi') = \sum_{\kn\neq\mathcal{O}_F}\frac{\Lambda_{\pi\times\pi'}(\kn)}{\N\kn^s \log\N\kn},\qquad \re(s)>1.
\end{align*}

\section{Positive semi-definite covers} \label{sec:non-def-covers}

In this section, we significantly extend the ideas in \cite{lichtman2024density} on positive semi-definite families of $L$-functions. Our goal is the inequality \eqref{eqn:inequality-RS-covers} for Rankin--Selberg coefficients, reiterated in \cref{prop:inequality-RS-covers}---recall that such a result is implicit in the proof of \cite[Theorem 4.1]{humphries2024zeros} when $\n$ has no ramified prime factors.  Here, we treat ramified and unramified places on the same footing.  To this end, we use a fairly general linear-algebraic language, although our arguments are ultimately combinatorial.

\subsection{Covers for matrices}

We first introduce a generalization of the classical positive semi-definiteness property for matrices.

\begin{definition}
\label{def:covers-bilinear-forms}
    Let $\mS$ be a finite set and $a, a^+\colon \mS \times \mS \to \mathbb{C}$ be functions (which can be viewed as matrices in $\mathbb{C}^{\mS \times \mS}$). We say that $a^+(x, y)$ is a \emph{positive semi-definite cover} for $a(x, y)$ if and only if there exist countably-many functions $u_j\colon \mS \to \mathbb{C}$ and complex numbers $d_j$ with $|d_j| \le 1$ such that
    \begin{equation} \label{eqn:covers-bilinear-forms}
        a(x, y) = \sum_{j=1}^\infty d_j u_j(x) \bar u_j(y),
        \qquad\qquad 
        a^+(x, y) = \sum_{j=1}^\infty u_j(x) \bar u_j(y),
    \end{equation}
    for all $x, y \in \mS$, where the convergence is absolute.
\end{definition}

\begin{remark}
In particular, $a^+(x, y)$ is positive semi-definite if and only if it is a positive semi-definite cover for itself (or for the identically-zero matrix). Any Hermitian matrix has a positive semi-definite cover, obtained by replacing all eigenvalues in its spectral decomposition with their absolute values.
\end{remark}

To motivate this definition, consider the following inequality.

\begin{lemma} \label{lem:inequality-bilinear-forms}
Let $\mS$ be a finite set and $a, a^+\colon \mS \times \mS \to \mathbb{C}$ be functions such that $a^+(x, y)$ is a positive semi-definite cover for $a(x, y)$.  If $\mS_1, \mS_2 \subseteq \mS$ are subsets and $v\colon \mS_1 \to \mathbb{C}$, $w\colon \mS_2 \to \mathbb{C}$ are functions, then
\[
    \Bigg| \sum_{\substack{x \in \mS_1 \\ y \in \mS_2}} v(x)w(y) a(x, y) \Bigg|^2
    \le 
    \Bigg(\sum_{x, x' \in \mS_1} v(x)\bar v(x')a^+(x, x')\Bigg) \Bigg(\sum_{y,y' \in \mS_2} w(y) \bar w(y') a^+(y', y)\Bigg).
\]
\end{lemma}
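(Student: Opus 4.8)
The plan is to substitute the cover representation from \eqref{eqn:covers-bilinear-forms} directly into the bilinear form on the left-hand side and reduce everything to the Cauchy--Schwarz inequality in $\ell^2$. First I would expand $a(x,y) = \sum_{j} d_j u_j(x)\bar u_j(y)$ inside the double sum; since $\mS$ is finite and the $j$-series converges absolutely (by hypothesis), I may interchange the order of summation to obtain
\[
\sum_{x \in \mS_1,\ y \in \mS_2} v(x) w(y) a(x,y) = \sum_{j=1}^{\infty} d_j V_j W_j, \qquad V_j := \sum_{x \in \mS_1} v(x) u_j(x), \qquad W_j := \sum_{y \in \mS_2} w(y) \bar u_j(y).
\]

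Next, using $|d_j| \le 1$ followed by the Cauchy--Schwarz inequality for the sequences $(V_j)_j$ and $(W_j)_j$ in $\ell^2$, I would bound
\[
\Big| \sum_{j} d_j V_j W_j \Big| \le \sum_{j} |V_j| |W_j| \le \Big( \sum_{j} |V_j|^2 \Big)^{1/2} \Big( \sum_{j} |W_j|^2 \Big)^{1/2}.
\]
Then I identify each factor with the corresponding quadratic form in $a^+$: expanding $|V_j|^2 = V_j \bar V_j$ and summing over $j$ (again legitimate since $\sum_j |u_j(x)|^2 = a^+(x,x) < \infty$ for each fixed $x \in \mS$, and $\mS_1$ is finite), the $j$-sum collapses via the second identity in \eqref{eqn:covers-bilinear-forms} to $\sum_{x,x' \in \mS_1} v(x)\bar v(x') a^+(x,x')$; similarly $\sum_j |W_j|^2 = \sum_{y,y' \in \mS_2} w(y)\bar w(y') a^+(y',y)$, where the order of the arguments of $a^+$ reflects the complex conjugate built into the definition of $W_j$ (note $a^+(y',y) = \overline{a^+(y,y')} = \sum_j \bar u_j(y) u_j(y')$). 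Squaring the resulting inequality gives the claim.

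There is no serious obstacle here; the argument is a ``generalized Cauchy--Schwarz'' in the spirit of the spectral-decomposition heuristic mentioned after \cref{def:covers-bilinear-forms}. The only points requiring (routine) care are the justification of the two rearrangements of the infinite $j$-sums, which follow from the absolute-convergence clause in \cref{def:covers-bilinear-forms} together with the finiteness of $\mS$, and bookkeeping of conjugations so that the $a^+$-forms appear with precisely the stated argument order.
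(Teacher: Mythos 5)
Your proposal is correct and follows essentially the same route as the paper's proof: expand $a(x,y)$ via \eqref{eqn:covers-bilinear-forms}, swap the (absolutely convergent) sums, apply Cauchy--Schwarz in $j$ using $|d_j|\le 1$, and collapse the resulting $j$-sums back into the quadratic forms in $a^+$. Your extra remarks on justifying the rearrangements and on the argument order $a^+(y',y)$ arising from the conjugation in $W_j$ are accurate points of care that the paper leaves implicit.
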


\begin{proof}
Recall the notation in \cref{def:covers-bilinear-forms}.  By expanding $a(x, y)$ and $a^+(x, y)$ as in \eqref{eqn:covers-bilinear-forms}, swapping sums, and applying the Cauchy--Schwarz inequality in $j$, we obtain
\[
\begin{aligned}
    \Bigg| \sum_{\substack{x \in \mS_1 \\ y \in \mS_2}} v(x)w(y)a(x, y) \Bigg|^2
    &=
    \Bigg| \sum_{j = 1}^\infty d_j \sum_{x \in \mS_1} v(x) u_j(x) \sum_{y \in \mS_2} w(y) \bar u_j(y)\Bigg|^2
    \\
    &\le 
    \Bigg(\sum_{j=1}^\infty
    \Bigg| \sum_{x \in \mS_1} v(x) u_j(x) \Bigg|^2
    \Bigg)
    \Bigg(\sum_{j=1}^\infty
    \Bigg| \sum_{y \in \mS_2} w(y) \bar u_j(y)\Bigg|^2
    \Bigg)
    \\
    &= 
    \Bigg(
    \sum_{x, x' \in \mS_1} v(x) \bar v(x') \sum_{j=1}^\infty u_j(x) \bar u_j(x')
    \Bigg)
    \Bigg(
    \sum_{y, y' \in \mS_2} w(y) \bar w(y') \sum_{j=1}^\infty \bar u_j(y) u_j(y')
    \Bigg),
\end{aligned}
\]
which simplifies to the desired bound.
\end{proof}

\cref{def:covers-bilinear-forms} is also well-behaved, in the sense that positive semi-definite covers are stable under several operations which also preserve positive semi-definiteness.

\begin{lemma} \label{lem:covers-basic-prop}
    Let $\lambda \in \mathbb{C}$. If $a^+(x, y)$ (resp.\ $b^+(x, y)$) is a positive semi-definite cover of $a(x, y)$ (resp.\ $b(x, y)$), then $|\lambda| a^+(x, y)$ is a positive-definite cover of $\lambda a(x, y)$, $(a^+ + b^+)(x, y)$ is a positive-definite cover of $(a + b)(x, y)$, and  $(a^+ \cdot b^+)(x, y)$ is a positive-definite cover of $(a^+ \cdot b^+)(x, y)$.
\end{lemma}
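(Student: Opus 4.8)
The three assertions are all instances of the same principle: given the promised decompositions of $a, a^+$ and $b, b^+$, I will exhibit explicit decompositions of the new pairs that satisfy the requirements of \cref{def:covers-bilinear-forms}. Since $\mS$ is finite, there are only finitely many pairs $(x,y)\in\mS\times\mS$ to worry about, so ``absolute convergence'' is a finite list of conditions on countable series. To begin, fix
$a(x,y) = \sum_j d_j u_j(x)\bar u_j(y)$, $a^+(x,y) = \sum_j u_j(x)\bar u_j(y)$, and
$b(x,y) = \sum_k e_k v_k(x)\bar v_k(y)$, $b^+(x,y) = \sum_k v_k(x)\bar v_k(y)$,
with $|d_j|,|e_k|\le 1$ and all four series absolutely convergent.

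\textbf{Scalar multiplication.} If $\lambda = 0$ the claim is trivial (take the empty family). Otherwise write $\lambda = |\lambda| e^{i\phi}$ and use the family with functions $|\lambda|^{1/2} u_j$ and coefficients $e^{i\phi} d_j$. The coefficients still have modulus $|d_j|\le 1$; substituting into \eqref{eqn:covers-bilinear-forms} yields $\sum_j (e^{i\phi}d_j)(|\lambda|^{1/2}u_j(x))\overline{(|\lambda|^{1/2}u_j(y))} = \lambda a(x,y)$ and likewise $|\lambda| a^+(x,y)$ from the second series, with absolute convergence unaffected up to the constant factor $|\lambda|$.

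\textbf{Addition.} Concatenate the two families: use the countable family $\{u_j\}_j \cup \{v_k\}_k$ together with the coefficients $\{d_j\}_j \cup \{e_k\}_k$. Every coefficient has modulus $\le 1$; the first series in \eqref{eqn:covers-bilinear-forms} becomes $a(x,y) + b(x,y)$, the second becomes $a^+(x,y) + b^+(x,y)$, and absolute convergence holds since it is a sum of two absolutely convergent series. (Here I use the intended reading of the third bullet, namely that $a^+\cdot b^+$ is a positive semi-definite cover of the Hadamard product $(a\cdot b)(x,y) := a(x,y)b(x,y)$.)

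\textbf{Hadamard product, and the main obstacle.} Index a new family by pairs $(j,k)$ with functions $w_{j,k}(x) := u_j(x)v_k(x)$ and coefficients $c_{j,k} := d_j e_k$, so $|c_{j,k}| = |d_j||e_k| \le 1$. Formally multiplying the two pairs of expansions gives $\sum_{j,k} c_{j,k} w_{j,k}(x)\bar w_{j,k}(y) = a(x,y)b(x,y)$ and $\sum_{j,k} w_{j,k}(x)\bar w_{j,k}(y) = a^+(x,y)b^+(x,y)$, which is exactly what is needed once the double sum is known to converge absolutely (so that it may be reindexed as a single countable sum and the product of the two original series may be expanded term by term). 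This absolute convergence is the one point requiring care: for each $x,y\in\mS$ one needs $\sum_{j,k}|w_{j,k}(x)||w_{j,k}(y)| < \infty$, which factors as $\big(\sum_j |u_j(x)||u_j(y)|\big)\big(\sum_k |v_k(x)||v_k(y)|\big)$. Each factor is finite: by Cauchy--Schwarz, $\sum_j |u_j(x)||u_j(y)| \le \big(\sum_j |u_j(x)|^2\big)^{1/2}\big(\sum_j |u_j(y)|^2\big)^{1/2}$, and $\sum_j |u_j(z)|^2 = \sum_j |u_j(z)\bar u_j(z)|$ is finite for each $z\in\mS$ because the second series of \eqref{eqn:covers-bilinear-forms} for $a^+$ converges absolutely when evaluated at $(z,z)$; the same applies to the $v_k$. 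With this in hand, Tonelli for countable sums of nonnegative terms permits the reorganizations above, completing all three cases.
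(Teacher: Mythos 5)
Your proposal is correct and follows essentially the same route as the paper: decompose $a,a^+,b,b^+$ as in \cref{def:covers-bilinear-forms}, rescale/concatenate the families for the first two claims, and take the product family $(u_j\cdot v_k)$ with coefficients $d_je_k$ for the third. The only difference is that you spell out the absolute convergence of the double sum (via the finiteness of $\sum_j|u_j(x)||u_j(y)|$, which already follows from the assumed absolute convergence of the series for $a^+(x,y)$), a point the paper leaves implicit when it re-indexes to a single countable sum; you also correctly read the evident typo in the third claim as asserting that $a^+\cdot b^+$ covers $a\cdot b$.
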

\begin{proof}
Write $a(x, y) = \sum_{j=1}^\infty c_j u_j(x) \bar u_j(y)$ and $a^+(x, y) = \sum_{j=1}^\infty u_j(x) \bar u_j(y)$, according to \cref{def:covers-bilinear-forms}.  Similarly, write $b(x, y) = \sum_{k=1}^\infty d_k v_k(x) \bar v_k(y)$ and $b^+(x, y) = \sum_{k=1}^\infty v_k(x) \bar v_k(y)$.  Per \cref{def:covers-bilinear-forms}, the bounds $|c_j|, |d_k| \le 1$ hold. The claims for scaling and addition are immediate.  For multiplication, we have the expansion
\[
    a(x, y)\,b(x, y) = 
    \sum_{j=1}^\infty \sum_{k=1}^\infty c_j d_k\ (u_j \cdot v_k)(x)\ \bar{(u_j \cdot v_k)(y)},
\]
where $|c_jd_k| \le 1$,
and $a^+(x,y)\,b^+(x, y)$ similarly.  We can then re-index to a single countable sum.
\end{proof}

\subsection{Covers for families of Dirichlet series} 
We now pass from matrices $a(x, y)$ to families of Dirichlet series $A(s; x, y)$. The reader should keep in mind the case when $\mS$ is a family of automorphic representations, and the Dirichlet series $A(s; x, y)$ come from Rankin--Selberg convolutions.

\begin{definition} \label{def:covers-l-functions}
Let $F$ be a number field, $\mS$ be a finite set, and $(A(s; x, y))_{x, y \in \mS}$, $(A^+(s; x, y))_{x, y \in \mS}$ be families of formal Dirichlet series with expansions
    \[
        A(s; x, y) = \sum_\n \frac{a(\n; x, y)}{\N \n^s},
        \qquad\quad 
        A^+(s; x, y) = \sum_\n \frac{a^+(\n; x, y)}{\N \n^s}.
    \]
    We say that $A^+(s; x, y)$ is a {\bf positive semi-definite cover} for $A(s; x, y)$ if for each $\n$, $a^+(\n; x, y)$ is a positive semi-definite cover for $a(\n; x, y)$ (indexing over $x, y \in \mS$). In other words, there exist functions $u_j : \mS \to \mathbb{C}$, complex numbers $d_j$ with $|d_j| \le 1$, and integral ideals $\n_j$ such that
    \[
        A(s; x, y) = \sum_{j=1}^\infty d_j \frac{u_j(x) \bar u_j(y)}{\N \n_j^s},
        \qquad\quad 
        A^+(s; x, y) = \sum_{j=1}^\infty \frac{u_j(x) \bar u_j(y)}{\N \n_j^s},
    \]
    where the convergence of the Dirichlet coefficient of each $\N \n^{-s}$ is absolute, pointwise in $x, y$. We say that $A^+(s; x, y)$ is {\bf positive semi-definite} if it is a positive semi-definite cover for $0$.
\end{definition}

Fortunately, several natural operations on $L$-functions preserve positive semi-definite covers.  The following lemma is a generalization of \cite[Lemma 3]{lichtman2024density}.

\begin{lemma} \label{lem:nondef-l-functions}
    Let $z \in \mathbb{C}$. 
    If $A^+(s; x, y)$ (resp.\ $B^+(s; x, y)$) is a positive semi-definite cover of $A(s; x, y)$ (resp.\ $B^+(s; x, y)$), then the families
    \[
        |z| A^+(s; x, y),
        \quad 
        (A^+ + B^+)(s; x, y),
        \quad 
        (A^+ \cdot B^+)(s; x, y),
        \quad
        \exp A^+(s; x, y)
    \]
    are positive semi-definite covers, respectively, of
    \[
        |z| A(s; x, y),
        \quad 
        (A + B)(s; x, y),
        \quad 
        (A \cdot B)(s; x, y),
        \quad
        \exp A(s; x, y)
    \]
    If $C(s; x, y)$ is positive semi-definite, then $\exp C(s; x, y)$ is a positive semi-definite cover for $C(s; x, y)$.
\end{lemma}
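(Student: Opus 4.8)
The plan is to reduce every assertion to the matrix‑level statements of \cref{lem:covers-basic-prop}, applied one Dirichlet coefficient at a time. Write $A(s;x,y)=\sum_\n a(\n;x,y)\N\n^{-s}$, $A^+(s;x,y)=\sum_\n a^+(\n;x,y)\N\n^{-s}$, and analogously for $B,B^+$; by hypothesis and \cref{def:covers-l-functions}, for every ideal $\n$ the matrix $a^+(\n;\cdot,\cdot)\in\mathbb{C}^{\mS\times\mS}$ is a positive semi-definite cover of $a(\n;\cdot,\cdot)$, and likewise for $b^+,b$. For scaling, the $\N\n^{-s}$-coefficient of $zA$ is $z\,a(\n;\cdot,\cdot)$ while that of $|z|A^+$ is $|z|\,a^+(\n;\cdot,\cdot)$, so the scaling part of \cref{lem:covers-basic-prop} shows $|z|A^+$ is a positive semi-definite cover of $zA$ (in particular of $|z|A$). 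For addition, the $\N\n^{-s}$-coefficients of $A+B$ and $A^++B^+$ are $a(\n;\cdot,\cdot)+b(\n;\cdot,\cdot)$ and $a^+(\n;\cdot,\cdot)+b^+(\n;\cdot,\cdot)$, and one invokes the addition part of \cref{lem:covers-basic-prop}. In both cases the countable expansions of \cref{def:covers-l-functions} are merely rescaled or concatenated, so the absolute convergence of each coefficient is preserved.

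For multiplication, I would observe that the $\N\n^{-s}$-coefficient of $A\cdot B$ is the Dirichlet convolution $\sum_{\kd\mid\n} a(\kd;\cdot,\cdot)\odot b(\n/\kd;\cdot,\cdot)$, a \emph{finite} sum of Hadamard (entrywise) products of matrices, and that the $\N\n^{-s}$-coefficient of $A^+\cdot B^+$ is the corresponding $\sum_{\kd\mid\n} a^+(\kd;\cdot,\cdot)\odot b^+(\n/\kd;\cdot,\cdot)$. Applying the multiplication part of \cref{lem:covers-basic-prop} to each divisor term and then the addition part (iterated over the finite divisor sum) shows the latter covers the former. Concretely, if $A(s;x,y)=\sum_j d_j u_j(x)\bar u_j(y)\N\n_j^{-s}$, $A^+(s;x,y)=\sum_j u_j(x)\bar u_j(y)\N\n_j^{-s}$, and $B,B^+$ are expanded via $e_k,v_k,\m_k$, then $A\cdot B=\sum_{j,k} d_j e_k\,(u_jv_k)(x)\overline{(u_jv_k)(y)}\,\N(\n_j\m_k)^{-s}$ with $|d_je_k|\le1$, and $A^+\cdot B^+$ is the same series with the $d_je_k$ deleted; reindexing $(j,k)$ as a single countable index gives the required expansions, and absolute convergence of each $\N\n^{-s}$-coefficient follows by grouping this double sum according to the factorizations $\n_j\m_k=\n$ and using the hypothesized absolute convergence for $A^+$ and $B^+$.

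For the exponential, I would expand the power series $\exp A^+(s;x,y)=\sum_{m\ge0}\frac1{m!}(A^+(s;x,y))^m$ using $A^+(s;x,y)=\sum_j u_j(x)\bar u_j(y)\N\n_j^{-s}$; multiplying out yields $\exp A^+(s;x,y)=\sum_{m\ge0}\sum_{(j_1,\dots,j_m)}\frac1{m!}(u_{j_1}\cdots u_{j_m})(x)\,\overline{(u_{j_1}\cdots u_{j_m})(y)}\,\N(\n_{j_1}\cdots\n_{j_m})^{-s}$, and the identical computation for $\exp A$ (with $A(s;x,y)=\sum_j d_j u_j(x)\bar u_j(y)\N\n_j^{-s}$) produces the same expansion weighted by $d_{j_1}\cdots d_{j_m}$, of modulus $\le1$. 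Reindexing the pairs $(m,(j_1,\dots,j_m))$ as one countable index thus realizes $\exp A^+$ as a positive semi-definite cover of $\exp A$. For the final assertion, suppose $C$ is positive semi-definite with expansion $C(s;x,y)=\sum_j u_j(x)\bar u_j(y)\N\n_j^{-s}$; running the same expansion for $\exp C$ and assigning weight $1$ to the indices arising from $m=1$ — whose total contribution is exactly $\sum_j u_j(x)\bar u_j(y)\N\n_j^{-s}=C(s;x,y)$ — and weight $0$ to every other index exhibits $\exp C$ as a positive semi-definite cover of $C$.

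The step I expect to be the main obstacle is the bookkeeping around $\exp$: one must verify that the formal exponential is a well-defined family of Dirichlet series and that all the reindexed sums converge absolutely coefficient-by-coefficient. This is immediate when every series in sight has vanishing constant term (only the $m\le\Omega(\n)$ terms then contribute to the $\N\n^{-s}$-coefficient, where $\Omega(\n)$ counts prime ideal factors of $\n$ with multiplicity). When the cover $A^+$ (or $C$) is allowed a nonzero — necessarily positive semi-definite — constant term, the $\N\n^{-s}$-coefficient of $\exp A^+$ is an infinite sum over $m$; its absolute convergence, pointwise in $x,y$, follows from the absolute-convergence clause of \cref{def:covers-l-functions} together with a crude estimate of the shape $\sum_{m}\frac1{m!}m^{\Omega(\n)}\big(\sum_{j\colon\n_j=\mathcal{O}_F}|u_j(x)||u_j(y)|\big)^{m}<\infty$, and the constant term of $\exp A^+$ is the entrywise exponential of that of $A^+$, which is positive semi-definite by the Schur product theorem. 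Everything else is routine termwise manipulation.
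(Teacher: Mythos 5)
Your proof is correct and follows essentially the same route as the paper: reduce scaling, addition, and multiplication to the matrix-level \cref{lem:covers-basic-prop} one Dirichlet coefficient at a time, expand $\exp$ via its power series using that $(A^+)^m$ covers $A^m$, and for the last claim extract the $m=1$ term of $\exp C$ (weights $1$ there, $0$ elsewhere). Your extra discussion of absolute convergence when the constant term is nonzero is more careful than the paper, which treats these as formal Dirichlet series and in its application ($\log L$ of Rankin--Selberg convolutions) only ever meets series with vanishing constant term.
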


\begin{proof}
The claims about the covers of $z A(s; x, y)$, $(A+B)(s; x, y)$, and $(A \cdot B)(s; x, y)$ follow from \cref{lem:covers-basic-prop}. For the exponentiation property, we use the expansions
\[
    \exp A(s; x, y) = \sum_{k = 0}^\infty \frac{1}{k!} A(s; x, y)^k,
    \qquad\quad 
    \exp A^+(s; x, y) = \sum_{k = 0}^\infty \frac{1}{k!} A^+(s; x, y)^k
\]
with the previous properties (note that $A^+(s; x, y)^k$ is a positive semi-definite cover for $A(s; x, y)^k$ for each $k$). For the final claim, expand $\exp C(s; x, y)$ as above and consider the term $k = 1$.
\end{proof}

Finally, we apply these notions to Rankin--Selberg $L$-functions.

\begin{lemma}
If $\mathcal{S}\subseteq \bigcup_{n=1}^{\infty}\mathfrak{F}_n$ is finite, then $(\log L(s, \pi \times \tilde\pi'))_{\pi, \pi' \in \mS}$ is positive semi-definite.
\end{lemma}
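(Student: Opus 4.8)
The plan is to verify \cref{def:covers-l-functions} directly: I will show that for every ideal $\n$ the coefficient matrix of $\N\n^{-s}$ in $(\log L(s,\pi\times\tilde\pi'))_{\pi,\pi'\in\mS}$ is Hermitian and positive semi-definite --- which, by the remark following \cref{def:covers-bilinear-forms}, makes it a positive semi-definite cover for $0$ --- and hence that the whole family is positive semi-definite. Since $L(s,\pi\times\tilde\pi')=\prod_\kp L(s,\pi_\kp\times\tilde\pi'_\kp)$, we have $\log L(s,\pi\times\tilde\pi')=\sum_\kp\log L(s,\pi_\kp\times\tilde\pi'_\kp)$ as formal Dirichlet series, so the coefficient at $\n$ vanishes unless $\n=\kp^\ell$ is a prime power, in which case it equals $\tfrac1\ell\sum_{j,j'}\alpha_{j,j',\pi\times\tilde\pi'}(\kp)^\ell$ (equivalently $\Lambda_{\pi\times\tilde\pi'}(\kp^\ell)/(\ell\log\N\kp)$). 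The zero matrix is trivially admissible, so it remains to fix a prime $\kp$ and an integer $\ell\ge 1$ and prove that $M^{(\kp,\ell)}:=\big[\sum_{j,j'}\alpha_{j,j',\pi\times\tilde\pi'}(\kp)^\ell\big]_{\pi,\pi'\in\mS}$ is Hermitian positive semi-definite. For $\kp\nmid\kq_\pi\kq_{\pi'}$ this is immediate: by \eqref{eqn:separate_dirichlet_coeffs} and $\{\alpha_{j,\tilde\pi'}(\kp)\}=\{\overline{\alpha_{j,\pi'}(\kp)}\}$ one gets $\sum_{j,j'}\alpha_{j,j',\pi\times\tilde\pi'}(\kp)^\ell=\big(\sum_j\alpha_{j,\pi}(\kp)^\ell\big)\overline{\big(\sum_{j'}\alpha_{j',\pi'}(\kp)^\ell\big)}$, which is rank one and manifestly positive semi-definite. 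The work is to handle ramified $\kp$ on the same footing.

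For this I would use the explicit description of the ramified local Rankin--Selberg factors from \cite[(A.8)]{soundararajan2019weak} (equivalently, the shape of the Weil--Deligne parameter of $\pi_\kp$ as a sum of twisted special representations). This yields, for each $\pi\in\mS$, finitely many pairs $(\beta_{i,\pi}(\kp),m_{i,\pi}(\kp))\in\mathbb{C}\times\mathbb{Z}_{\ge1}$ with $m_{i,\pi}(\kp)\le n$, organized so that each index $i$ carries an unramified-twist ``orbit class'' $O(i)$, such that for all $\pi,\pi'\in\mS$
\[
    L(s,\pi_\kp\times\tilde\pi'_\kp)=\prod_{\substack{(i,i')\\ O(i)=O(i')}}\ \prod_{k=0}^{\min(m_{i,\pi}(\kp),\,m_{i',\pi'}(\kp))-1}\Big(1-\beta_{i,\pi}(\kp)\,\overline{\beta_{i',\pi'}(\kp)}\,\N\kp^{k-s}\Big)^{-1};
\]
when every $\kp\nmid\kq_\pi$ this recovers the unramified identity above (all pairs are then matched and each $m=1$). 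Taking $-\log$ and reading off the coefficient of $\N\kp^{-\ell s}$ gives
\[
    M^{(\kp,\ell)}_{\pi,\pi'}=\sum_{\substack{(i,i')\\ O(i)=O(i')}}\beta_{i,\pi}(\kp)^\ell\,\overline{\beta_{i',\pi'}(\kp)^\ell}\sum_{k=0}^{\min(m_{i,\pi}(\kp),\,m_{i',\pi'}(\kp))-1}\N\kp^{k\ell}.
\]
The inner sum is $g\big(\min(m_{i,\pi}(\kp),m_{i',\pi'}(\kp))\big)$ for the nondecreasing function $g(r):=\sum_{0\le k<r}\N\kp^{k\ell}$, and the elementary identity $g(\min(m,m'))=\sum_{t\ge1}\big(g(t)-g(t-1)\big)\mathbf{1}[m\ge t]\,\mathbf{1}[m'\ge t]$ (valid for any nondecreasing $g$ with $g(0)=0$) lets me resum. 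Since the condition $O(i)=O(i')$ factors through $O(\cdot)$, grouping by the common orbit $O$ produces
\[
    M^{(\kp,\ell)}_{\pi,\pi'}=\sum_{t\ge1}\sum_{O}\N\kp^{(t-1)\ell}\,w_{t,O}(\pi)\,\overline{w_{t,O}(\pi')},\qquad w_{t,O}(\pi):=\sum_{\substack{i:\ O(i)=O\\ m_{i,\pi}(\kp)\ge t}}\beta_{i,\pi}(\kp)^\ell.
\]
Each $\big[w_{t,O}(\pi)\overline{w_{t,O}(\pi')}\big]_{\pi,\pi'}$ is rank-one Hermitian positive semi-definite, the weights $\N\kp^{(t-1)\ell}$ are positive, and the double sum is finite (there are finitely many active orbits and $t\le n$); hence $M^{(\kp,\ell)}$ is Hermitian positive semi-definite, which completes the argument.

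I expect the only real obstacle to be the ramified case: the whole argument hinges on the ramified local Rankin--Selberg factor still having the ``separable, $\min$-of-lengths'' shape displayed above, which is precisely what the explicit description of ramified Euler factors in \cite{soundararajan2019weak} supplies, and what makes the uniform treatment of ramified and unramified places possible. Given that shape, positivity is a short combinatorial consequence of the monotonicity of $g$ --- that is, of the standard fact that $\min(m,m')$-type data always assemble into positive semi-definite matrices.
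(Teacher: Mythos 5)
Your proposal is correct and follows essentially the same route as the paper: both proofs reduce to the explicit description of the (possibly ramified) local Rankin--Selberg factors in \cite[(A.8)]{soundararajan2019weak} and then exhibit each prime-power Dirichlet coefficient as a positive linear combination of rank-one matrices $u(\pi)\bar u(\pi')$. The only difference is that your explicit ``$\min$-of-multiplicities'' resummation via $g(\min(m,m'))=\sum_{t\ge1}(g(t)-g(t-1))\mathbf{1}[m\ge t]\,\mathbf{1}[m'\ge t]$ is already encoded in the cited formula (A.8) through the inner conditions $n_j(\pi_p)\ge\nu$ and the weights $p^{e_\ell f\nu}/f$, so the paper simply quotes that formula where you rederive its structure.
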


\begin{proof}
This is a purely formal generalization of \cite[Proposition 4]{lichtman2024density} to variable ranks $n$ and general number fields. 
When $F = \Q$, the computations of Brumley \cite[Appendix A.2, (A.8)]{soundararajan2019weak} express each factor in the Euler product $L(s, \pi \times \tilde\pi') = \prod_{p\text{ prime}} L(s, \pi_p \times \tilde\pi'_p)$, with minor changes of notation, as
\[
    \log L(s, \pi_p \times \tilde \pi'_p) =
    \sum_{[\varrho_\ell] \text{ for } \Q_p}\ 
    \sum_{f, \nu \ge 1} 
    \frac{1}{f p^{e_\ell f(s-\nu)}}
    \sum_{\substack{\rho_j(\pi_p) \in [\varrho_\ell] \\ n_j(\pi_p) \ge \nu}}\ 
    z_j(\pi_p, \ell)^{e_\ell f}
    \bar{ \sum_{\substack{\rho_k(\pi'_p) \in [\varrho_\ell] \\ n_k(\pi'_p) \ge \nu}} 
    z_k(\pi'_p, \ell)^{e_\ell f}
    },
\]
where $[\varrho_\ell]$ ranges over all the twist-equivalence classes of unitary supercuspidal representations of a general linear group over $\Q_p$, $e_\ell$ is the torsion number of $\varrho_\ell$, $\rho_j(\pi_p)$ (resp., $\rho_k(\pi'_p)$) ranges over the supercuspidals in a deconstruction of the $p$-adic components $\pi_p$ (resp., $\pi'_p$) that are twist-equivalent to $\varrho_\ell$, $n_j(\pi_p)$ are positive integers depending only on $\pi_p$ and $j$, and $z_j(\pi_p, \ell)$ are complex numbers depending only on $\pi_p, \ell, j$. Summing over primes $p$ and collecting terms, one can ultimately write
\[
    \log L(s, \pi \times \tilde\pi') = \sum_j \frac{S_j(\pi) \bar S_j(\pi')}{n_j^s},
\]
where $j = (p, \ell, f, \nu)$ is a tuple of parameters with countably many values, $n_j = p^{e_\ell f}$, $S_j(\pi)$ are complex numbers depending only on $\pi$ and $j$, and the Dirichlet coefficient of each $n_j^{-s}$ is eventually constant in the series above. For a general number field $F$, an analogous argument yields
\begin{equation} \label{eqn:log-non-def}
    \log L(s, \pi \times \tilde \pi') = \sum_j \frac{S_j(\pi) \bar S_j(\pi')}{\N \n_j^s},
\end{equation}
with all parameters depending implicitly on $F$. This is the form required by \cref{def:covers-l-functions}.
\end{proof}

Combining our ingredients in this section, we obtain the following result.
\begin{proposition}
\label{prop:inequality-RS-covers}
Let $\mS_1, \mS_2$ be finite subsets of $\bigcup_{n=1}^{\infty}\mathfrak{F}_n$. With the understanding that families are indexed over $\mS_1 \cup \{\tilde\pi : \pi \in \mS_2\}$, the following results hold:
\begin{itemize}
    \item[$(i)$.] $L(s, \pi \times \tilde\pi')$ is a positive semi-definite cover for itself, $L(s, \pi \times \tilde\pi')^{-1}$, and $\log L(s, \pi \times \tilde\pi')$.
    \item[$(ii)$.] Let $L^+(s, \pi \times \tilde\pi') = \sum_\n \lambda^+_{\pi \times \tilde\pi'}(\n) \N\n^{-s}$ be a positive semi-definite cover for $L^\circ(s, \pi \times \tilde\pi') = \sum_\n \lambda^\circ_{\pi \times \tilde\pi'}(\n) \N\n^{-s}$.  If $v\colon \mS_1 \to \mathbb{C}$ and $w\colon \mS_2 \to \mathbb{C}$ are functions, then
\[
    \Bigg| \sum_{\substack{\pi_1 \in \mS_1 \\ \pi_2 \in \mS_2}} v(\pi_1) w(\pi_2) \lambda^\circ_{\pi_1 
    \times \pi_2}(\n) \Bigg|^2 \le \Bigg( \sum_{\pi_1, \pi_1' \in \mS_1} v(\pi_1) \bar v(\pi_1') \lambda^+_{\pi_1 \times \tilde\pi_1'}(\n) \Bigg) \Bigg( \sum_{\pi_2, \pi_2' \in \mS_2} w(\pi_2) \bar w(\pi_2') \lambda^+_{\tilde\pi_2' \times \pi_2}(\n) \Bigg).
\]
In particular, if $\mS_1 = \{\pi_1\}$ consists of only one element, then
\begin{equation}
\label{eqn:average-RS-bound}
    \Bigg| \sum_{\pi_2 \in \mS_2} w(\pi_2) \lambda^\circ_{\pi_1 
    \times \pi_2}(\n) \Bigg|^2 \le \lambda^+_{\pi_1 \times \tilde\pi_1}(\n) \sum_{\pi_2, \pi_2' \in \mS_2} w(\pi_2) \bar w(\pi_2') \lambda^+_{\tilde\pi_2' \times \pi_2}(\n).
\end{equation}
If $\mS_2 = \{\pi_2\}$, then \eqref{eqn:average-RS-bound} further simplifies to 
\begin{equation}
\label{eqn:pointwise-RS-bound}
    |\lambda^\circ_{\pi_1 \times \pi_2}(\n)|^2 \le \lambda^+_{\pi_1 \times \tilde\pi_1}(\n) \lambda^+_{\tilde\pi_2 \times \pi_2}(\n).
\end{equation}
\end{itemize}    
\end{proposition}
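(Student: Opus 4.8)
The plan is to assemble \cref{prop:inequality-RS-covers} from the machinery developed earlier in the section, treating part $(i)$ and part $(ii)$ in turn. For part $(i)$, the starting point is the preceding lemma, which states that $(\log L(s, \pi \times \tilde\pi'))_{\pi, \pi'}$ is positive semi-definite when indexed over any finite subfamily of $\bigcup_n \mathfrak{F}_n$; I apply it to the index set $\mS_1 \cup \{\tilde\pi : \pi \in \mS_2\}$ so that, for $\pi_1 \in \mS_1$ and $\pi_2 \in \mS_2$, the convolution $\pi_1 \times \tilde{\tilde\pi}_2 = \pi_1 \times \pi_2$ appears correctly. By the final sentence of \cref{lem:nondef-l-functions}, $\exp \log L(s, \pi \times \tilde\pi') = L(s, \pi \times \tilde\pi')$ is a positive semi-definite cover for $\log L(s, \pi \times \tilde\pi')$; in particular (taking the $d_j$ all equal to $1$, or equivalently covering $0$) it is a positive semi-definite cover for itself. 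To handle $L(s, \pi \times \tilde\pi')^{-1}$, I write $L(s, \pi \times \tilde\pi')^{-1} = \exp(-\log L(s, \pi \times \tilde\pi'))$; since $(-\log L(s, \pi \times \tilde\pi'))$ is $(-1)$ times a positive semi-definite family, it is covered by $\log L(s, \pi \times \tilde\pi')$ (using $|{-1}| \le 1$ in the definition), hence by \cref{lem:nondef-l-functions} its exponential $L(s, \pi \times \tilde\pi')^{-1}$ is covered by $\exp \log L(s, \pi \times \tilde\pi') = L(s, \pi \times \tilde\pi')$. This gives all three assertions of $(i)$.

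For part $(ii)$, I invoke \cref{lem:inequality-bilinear-forms} at each fixed ideal $\n$: the matrix $a^+(\n; \cdot, \cdot) = \lambda^+_{\pi \times \tilde\pi'}(\n)$ is, by hypothesis, a positive semi-definite cover for $a(\n; \cdot, \cdot) = \lambda^\circ_{\pi \times \tilde\pi'}(\n)$, with index set $\mS := \mS_1 \cup \{\tilde\pi : \pi \in \mS_2\}$. Taking the subsets $\mS_1$ and $\{\tilde\pi_2 : \pi_2 \in \mS_2\}$ of $\mS$, and the test functions $v$ on $\mS_1$ and $w'$ on $\{\tilde\pi_2\}$ defined by $w'(\tilde\pi_2) := w(\pi_2)$, the double sum on the left of \cref{lem:inequality-bilinear-forms} becomes $\sum_{\pi_1, \pi_2} v(\pi_1) w(\pi_2) \lambda^\circ_{\pi_1 \times \tilde{\tilde\pi}_2}(\n) = \sum_{\pi_1, \pi_2} v(\pi_1) w(\pi_2) \lambda^\circ_{\pi_1 \times \pi_2}(\n)$, and the two factors on the right become $\sum_{\pi_1, \pi_1'} v(\pi_1) \bar v(\pi_1') \lambda^+_{\pi_1 \times \tilde\pi_1'}(\n)$ and $\sum_{\pi_2, \pi_2'} w(\pi_2) \bar w(\pi_2') \lambda^+_{\tilde\pi_2' \times \tilde{\tilde\pi}_2}(\n) = \sum_{\pi_2, \pi_2'} w(\pi_2) \bar w(\pi_2') \lambda^+_{\tilde\pi_2' \times \pi_2}(\n)$. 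This is exactly the claimed inequality. The specializations are then immediate: when $\mS_1 = \{\pi_1\}$, the first factor collapses to $|v(\pi_1)|^2 \lambda^+_{\pi_1 \times \tilde\pi_1}(\n)$, and rescaling $v(\pi_1) = 1$ yields \cref{eqn:average-RS-bound}; taking moreover $\mS_2 = \{\pi_2\}$ and $w(\pi_2) = 1$ yields \cref{eqn:pointwise-RS-bound}.

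The only genuinely delicate point is bookkeeping with contragredients and the index set: one must be consistent that families are indexed over $\mS_1 \cup \{\tilde\pi : \pi \in \mS_2\}$, so that a term $\lambda^+_{x \times \tilde y}(\n)$ with $x = \pi_1 \in \mS_1$ and $y = \tilde\pi_2$ reads $\lambda^+_{\pi_1 \times \tilde{\tilde\pi}_2}(\n) = \lambda^+_{\pi_1 \times \pi_2}(\n)$, using $\tilde{\tilde\pi}_2 = \pi_2$; and likewise that the ``$\mS_2$-block'' of the right-hand side, which \cref{lem:inequality-bilinear-forms} outputs with arguments swapped as $a^+(y', y)$, becomes $\lambda^+_{\tilde\pi_2' \times \pi_2}(\n)$ after unwinding $y = \tilde\pi_2$, $y' = \tilde\pi_2'$. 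Beyond this, everything is a direct citation of \cref{lem:nondef-l-functions,lem:inequality-bilinear-forms} and the positive semi-definiteness of $\log L(s, \pi \times \tilde\pi')$, so no substantial obstacle arises.
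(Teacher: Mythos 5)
Your proposal is correct and follows essentially the same route as the paper: part $(i)$ via the positive semi-definiteness of $\log L(s,\pi\times\tilde\pi')$ together with the exponentiation property in \cref{lem:nondef-l-functions}, and part $(ii)$ via \cref{lem:inequality-bilinear-forms} applied to $\mS_1$ and $\{\tilde\pi:\pi\in\mS_2\}$. The contragredient bookkeeping you carry out explicitly is exactly what the paper leaves implicit.
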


\begin{proof}
    For the first two claims in (i), note that $\log L(s, \pi \times \tilde\pi')$ is a positive semi-definite cover for $\pm \log L(s, \pi \times \tilde\pi')$, and exponentiate both sides. The last claim in (i) follows directly from the last part of \cref{lem:nondef-l-functions}. Part (ii) is simply an application of \cref{lem:inequality-bilinear-forms} for $\mS_1$ and $\{\tilde\pi : \pi \in \mS_2\}$.
\end{proof}

\begin{remark}
\cref{prop:inequality-RS-covers} generalizes and refines several results found in literature.
\begin{itemize}
\item[(1).]
Take $L^\circ = L^+ = L$, i.e., $\lambda^\circ_{\pi \times \pi'}(\n) = \lambda^+_{\pi \times \tilde\pi'}(\n) = \lambda_{\pi \times \tilde\pi'}(\n)$. Then \eqref{eqn:average-RS-bound} matches our claim from \eqref{eqn:inequality-RS}, which implies \eqref{eqn:key-inequality} and \eqref{eqn:posdef-claim} by taking $\pi_1$ to be the trivial representation.  Recall that \eqref{eqn:key-inequality} extends the bound \eqref{eqn:approximation2} used by Thorner--Zaman \cite{TZ_GLn} in the unramified case. Meanwhile, \eqref{eqn:pointwise-RS-bound} recovers the bound $|\lambda_{\pi_1\times\pi_2}(\kn)|^2\leq \lambda_{\pi_1\times\tilde{\pi}_1}(\kn)\lambda_{\pi_2\times\tilde{\pi}_2}(\kn)$, which extends \cite[Lemma 3.1]{JLW} to number fields.
\item[(2).] Take $\lambda^\circ_{\pi \times \tilde\pi'}(\n) = \lambda^+_{\pi \times \tilde\pi'}(\n) = \lambda_{\pi \times \tilde\pi'}(\n) - \lambda_\pi(\n) \bar\lambda_{\pi'}(\n)$, which is positive semi-definite in $\pi, \pi'$ by \eqref{eqn:posdef-claim}. Then \eqref{eqn:pointwise-RS-bound} becomes the bound
\[
    |\lambda_{\pi_1 \times \pi_2}(\n) - \lambda_{\pi_1}(\n) \lambda_{\pi_2}(\n)|^2 \le (\lambda_{\pi_1 \times \tilde\pi_1}(\n) - |\lambda_{\pi_1}(\n)|^2) (\lambda_{\pi_2 \times \tilde\pi_2}(\n) - |\lambda_{\pi_2}(\n)|^2),
\]
which removes the real part and the coprimality constraints from \cite[Proposition 3.1]{TZ_GLn}.
\item[(3).] Take $L^\circ = L^{-1}$ and $L^+ = L$, i.e., $\lambda^\circ_{\pi \times \tilde\pi'}(\n)$ are the coefficients $\mu_{\pi\times\pi'}(\kn)$ of $L(s, \pi \times \tilde\pi')^{-1}$, and $\lambda^+_{\pi \times \tilde\pi'}(\n) = \lambda_{\pi \times \tilde\pi'}(\n)$.  Then, we recover from \eqref{eqn:average-RS-bound} and \eqref{eqn:pointwise-RS-bound} the bounds
\begin{equation}
\label{eqn:mu_bound}
\begin{aligned}
 \Bigg| \sum_{\pi_2 \in \mS_2} w(\pi_2) \mu_{\pi_1 
    \times \pi_2}(\n) \Bigg|^2 &\le \lambda_{\pi_1 \times \tilde\pi_1}(\n) \sum_{\pi_2, \pi_2' \in \mS_2} w(\pi_2) \bar w(\pi_2') \lambda_{\pi_2 \times \tilde\pi_2'}(\n),\\
    |\mu_{\pi\times\pi'}(\kn)|^2&\leq \lambda_{\pi\times\tilde{\pi}}(\kn)\lambda_{\pi'\times\tilde{\pi}'}(\kn),
\end{aligned}
\end{equation}
respectively.  These were proved in \cite[Section 4]{humphries2024zeros} when $\gcd(\kn,\kq_{\pi}\kq_{\pi'})=\mathcal{O}_F$.  The bound \eqref{eqn:mu_bound} was independently proved by Jiang using other means in \cite[Lemma~3.1]{Jiang}.
\item[(4).] When $L^+ = L^\circ = \log L$, \eqref{eqn:pointwise-RS-bound} recovers the inequality $|\Lambda_{\pi\times\pi'}(\kn)|^2\leq \Lambda_{\pi\times\tilde{\pi}}(\kn)\Lambda_{\pi'\times\tilde{\pi}'}(\kn)$ from \cite[Proposition A.1]{soundararajan2019weak}. When $L^\circ = \log L$, $L^+ = L$, and $\pi_1 = \pi_2$, \eqref{eqn:pointwise-RS-bound} recovers the bound $\Lambda_{\pi\times\tilde{\pi}}(\kp^k)\leq k\lambda_{\pi\times\tilde{\pi}}(\kp^k) \log \N\kp$, which is \cite[(6.10)]{soundararajan2019weak}. These were the key inequalities for Rankin--Selberg coefficients used in the work of Soundararajan and Thorner \cite{soundararajan2019weak}.
\end{itemize}
\end{remark}

\section{A large sieve for covered coefficients} \label{sec:large-sieve}

We now state a general large sieve inequality for the Dirichlet coefficients $\lambda_{\pi \times \tilde\pi'}$ of Rankin--Selberg $L$-functions, and in fact for any coefficients $\lambda^\circ_{\pi \times \tilde\pi'}$ which have $\lambda_{\pi \times \tilde\pi'}$ as a positive semi-definite cover. This improves on \cite[Theorem 4.2]{TZ_GLn} and \cite[Theorem 4.1]{humphries2024zeros} by factors of $Q^{4\theta_n n^2}$ in the off-diagonal terms, while also weakening the required lower bound on the sifting parameter $z$.

\begin{theorem}
\label{thm:large-sieve-general}
Let $\eps > 0$ and $x, T \ge 1$.  If $a(\n)$ is a complex-valued function, $\pi_0 \in \F_{n_0}$, $\mS \subseteq \F_n$ is finite, $\max_{\pi \in \mS} \mathfrak{C}_{\pi} = Q$, and $L(s, \pi \times \tilde\pi')$ is a positive semi-definite cover for $L^\circ(s, \pi \times \tilde\pi') = \sum_\n \lambda^\circ_{\pi \times \tilde\pi'}(\n) \N \n^{-s}$ (indexing by $\pi, \pi' \in \mS \cup \{\tilde\pi_0\}$), then
{\small\begin{equation}
\label{eqn:large-sieve-general}
    \sum_{\pi \in \mS}
    \Bigg| \sum_{\N \n \in (x, e^{1/T}x]} a(\n) \lambda^\circ_{\pi \times \pi_0}(\n)   \Bigg|^2
    \ll_{\epsilon}
    Q^\eps
  \Big(\frac{x}{T} + D_F^{-\frac{n^2}{2}}Q^n T^{\frac{[F:\Q]n^2}{2}+\epsilon} |\mS|\Big) \sum_{\N \n \in (x, e^{1/T}x]} |a(\n)|^2 \lambda_{\pi_0 \times \tilde\pi_0}(\n).
\end{equation}}%
Also, if $x\geq D_F^{-n^2}Q^{2n}|\mathcal{S}|^{\epsilon}$, then
\begin{equation}
\label{eqn:large-sieve-general_2}
    \sum_{\pi \in \mS}
    \Bigg| \sum_{\N \n \in (x, ex]} a(\n) \lambda^\circ_{\pi \times \pi_0}(\n)   \Bigg|^2
    \ll_{\epsilon}
  Q^{\epsilon} x
    \sum_{\N \n \in (x, ex]} |a(\n)|^2 \lambda_{\pi_0 \times \tilde\pi_0}(\n).
\end{equation}
Finally, there exists an effectively computable constant $\Cl[abcon]{large_sieve_z}=\Cr{large_sieve_z}(n,[F:\Q],\epsilon)\geq 1$ such that if $z \geq \Cr{large_sieve_z} D_F^{-n^2/2}Q^{n+\eps}$, then 
\begin{equation} \label{eqn:large-sieve-general-sifted}
\begin{aligned}
    &\sum_{\pi \in \mS}
    \Bigg| \sum_{\substack{\N \n \in (x, e^{1/T} x] \\ \p \mid \n\, \Rightarrow\, \N \p > z}} a(\n) \lambda^\circ_{\pi \times \pi_0}(\n) \Bigg|^2
    \\
    &\ll_{\epsilon}
    \frac{1}{\log z}\Big(\frac{x}{T} + D_F^{-\frac{n^2}{2}}Q^{n+\eps} z^{2n^2+2+\eps} T^{\frac{[F:\Q]n^2}{2}+\epsilon} 
     |\mS|\Big)
    \sum_{\substack{\N \n \in (x, e^{1/T} x] \\ \p \mid \n\, \Rightarrow\, \N \p > z}} |a(\n)|^2 \lambda_{\pi_0 \times \tilde\pi_0}(\n).
\end{aligned}
\end{equation}
\end{theorem}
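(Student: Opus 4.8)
I would prove all three inequalities by a common scheme. First, by the duality principle for bilinear forms \cite[p.~170]{IK}, each asserted bound is equivalent to showing that, for every $w\colon\mS\to\mathbb C$,
\[
\sum_{\substack{\N\n\in I,\ \n\in\mathcal A\\\lambda_{\pi_0\times\tilde\pi_0}(\n)\ne 0}}\frac{1}{\lambda_{\pi_0\times\tilde\pi_0}(\n)}\Big|\sum_{\pi\in\mS}w(\pi)\lambda^\circ_{\pi\times\pi_0}(\n)\Big|^2\ \ll\ C\,\|w\|_2^2,
\]
where $I$ is the relevant interval, $\mathcal A$ is either all ideals or the set of $z$-rough ideals, $C$ is the corresponding bracketed factor (e.g.\ $Q^\epsilon(x/T+D_F^{-n^2/2}Q^nT^{[F:\Q]n^2/2+\epsilon}|\mS|)$ for \eqref{eqn:large-sieve-general}), and the $\n$ with $\lambda_{\pi_0\times\tilde\pi_0}(\n)=0$ may be dropped since then $\lambda^\circ_{\pi\times\pi_0}(\n)=0$ by \eqref{eqn:pointwise-RS-bound}. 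By \eqref{eqn:average-RS-bound} of \cref{prop:inequality-RS-covers} (with $L^+=L$), the inner square is $\le\lambda_{\pi_0\times\tilde\pi_0}(\n)\,c(\n)$ where $c(\n):=\sum_{\pi,\pi'\in\mS}w(\pi)\bar w(\pi')\lambda_{\pi\times\tilde\pi'}(\n)$, so I am reduced to bounding $\sum_{\N\n\in I,\ \n\in\mathcal A}c(\n)$. The key structural observation is that \cref{prop:inequality-RS-covers} also tells us the matrix $(\lambda_{\pi\times\tilde\pi'}(\n))_{\pi,\pi'\in\mS}$ is Hermitian positive semi-definite for each fixed $\n$, so $c(\n)\ge0$. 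This lets me bound $\sum_{\N\n\in I,\ \n\in\mathcal A}c(\n)$ by $\sum_\n\Psi^+(\N\n)\Theta^+(\n)c(\n)$ for \emph{any} majorants $\Psi^+\ge\mathbf 1_I$ and $\Theta^+\ge\mathbf 1_{\mathcal A}$ with no error -- a crucial gain over the sharp-cutoff Perron formula. I would take $\Psi^+$ smooth, supported on an interval of length $\asymp x/T$ about $x$, with $0\le\Psi^+\le1$ and $\Psi^{+(j)}\ll_j(T/x)^j$; and $\Theta^+\equiv 1$ in the unsifted case, $\Theta^+(\n)=\big(\sum_{\kd\mid\n,\ \kd\mid P(z)}\rho_\kd\big)^2$ (Selberg weights, $P(z)=\prod_{\N\p\le z}\p$, $\rho_{\mathcal O_F}=1$, $\rho$ supported on $\N\kd\le z$, $|\rho_\kd|\le1$) in the sifted case.

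Next I would expand $\Theta^+$, swap sums, and reduce everything to estimating $M^{(\kd)}_{\pi,\pi'}:=\sum_{\kd\mid\n}\Psi^+(\N\n)\lambda_{\pi\times\tilde\pi'}(\n)$ (with $\kd=\mathcal O_F$ in the unsifted case and $\kd=[\kd_1,\kd_2]$ in the sifted case). Mellin inversion gives $M^{(\kd)}_{\pi,\pi'}=\frac1{2\pi i}\int_{(2)}\N\kd^{-s}L(s,\pi\times\tilde\pi')\big(\prod_{\p\mid\kd}h_\p(s)\big)\widehat{\Psi^+}(s)\,ds$, where $h_\p$ are explicit ratios of truncated Euler factors ($\equiv1$ if $\kd=\mathcal O_F$), and $\widehat{\Psi^+}$ is entire with $\widehat{\Psi^+}(1)\asymp x/T$ and $\widehat{\Psi^+}(\sigma+it)\ll_j\frac{x^\sigma}T(1+|t|/T)^{-j}$. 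On the diagonal $\pi=\pi'$, $L(s,\pi\times\tilde\pi)$ has a simple pole at $s=1$ with residue $\ll_\epsilon\mathfrak C_{\pi\times\tilde\pi}^\epsilon\ll Q^\epsilon$ (by \cref{lem:Li1}) and $\mathfrak L(s,\pi\times\tilde\pi)$ is entire; shifting to a line $\Re s=-\delta$ with $\delta>0$ small, the residue contributes a main term $\ll Q^\epsilon x/T$ -- improved to $\ll Q^\epsilon x/(T\log z)$ in the sifted case by Selberg's optimization of $\rho$, using \cref{lem:mertens} to see the sieve has dimension $1$ up to $Q^{o(1)}$ and a prime-number-theorem lower bound valid once $z$ exceeds a fixed power of the conductor (this being the role of the lower bound on $z$) -- while the shifted integral is bounded by \cref{lem:Li1} and the conductor growth bound \eqref{eqn:BH}. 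On the off-diagonal $\pi\ne\pi'$, $L(s,\pi\times\tilde\pi')$ is entire, so the whole integrand is entire and may be shifted to $\Re s=-\delta$ \emph{without} crossing the pole at $s=0$ that a sharp truncation would produce; then \eqref{eqn:Lbound2}, \eqref{eqn:BH}, and a trivial bound $\prod_{\p\mid\kd}|h_\p(-\delta+it)|\ll\N\kd^{O_n(1)}$ give $|M^{(\kd)}_{\pi,\pi'}|\ll x^{-\delta}\N\kd^{O_n(1)}\mathfrak C_{\pi\times\tilde\pi'}^{1/2+\delta+\epsilon}T^{[F:\Q]n^2/2+\epsilon}\ll D_F^{-n^2/2}Q^{n+\epsilon}\N\kd^{O_n(1)}T^{[F:\Q]n^2/2+\epsilon}$ (the factor $T$ lost in the vertical integral being exactly cancelled by $1/T$ in $\widehat{\Psi^+}$, and using $\mathfrak C_{\pi\times\tilde\pi'}\ll D_F^{-n^2}Q^{2n}$ from \eqref{eqn:BH}).

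I would then assemble these: summing the diagonal terms against $|w(\pi)|^2$ and the off-diagonal ones against $|w(\pi)||w(\pi')|$ (using $\sum_{\pi\ne\pi'}|w(\pi)||w(\pi')|\le|\mS|\,\|w\|_2^2$, and in the sifted case summing over $\N\kd=\N[\kd_1,\kd_2]\le z^2$, which accounts for the $z^{2n^2+2+\epsilon}$ factor) yields \eqref{eqn:large-sieve-general} and \eqref{eqn:large-sieve-general-sifted}. For \eqref{eqn:large-sieve-general_2} I would set $T=1$ and exploit that the hypothesis $x\ge D_F^{-n^2}Q^{2n}|\mS|^\epsilon$ forces $\mathfrak C_{\pi\times\tilde\pi'}/x\ll_n|\mS|^{-\epsilon}$ for every pair (by \eqref{eqn:BH}); so I would push the off-diagonal contour all the way to $\Re s=-M$ for a fixed large $M=M(\epsilon)$, gaining $(\mathfrak C_{\pi\times\tilde\pi'}/x)^M\ll|\mS|^{-1}$, whence the off-diagonal contribution becomes $\ll D_F^{-n^2/2}Q^{n+\epsilon}\|w\|_2^2\ll Q^\epsilon x\,\|w\|_2^2$ and is absorbed into the diagonal's $Q^\epsilon x\,\|w\|_2^2$.

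The hard part will be the off-diagonal estimate in the sifted case: to the left of $\Re s=0$ the truncated Euler factors $h_\p(s)$ have rapidly growing Dirichlet coefficients, and bounding $\prod_{\p\mid\kd}|h_\p(-\delta+it)|$ by a harmless power $\N\kd^{O_n(1)}$ (which, through $\N[\kd_1,\kd_2]\le z^2$, is what produces the $z^{2n^2+2+\epsilon}$ in the statement) requires the explicit description of the ramified Rankin--Selberg Euler factors from \cite[(A.8)]{soundararajan2019weak} together with the pointwise progress towards GRC in \eqref{eqn:LRS_2}. A secondary difficulty is verifying, uniformly over $\pi\in\mS$, that $\lambda_{\pi\times\tilde\pi}(\n)$ behaves like a dimension-one sieve weight up to $Q^{o(1)}$, so that a single choice of $\rho$ yields the $\frac1{\log z}$ savings for every $\pi\in\mS$ at once; this is exactly what \cref{lem:mertens} provides.
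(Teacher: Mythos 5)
Your proposal is correct and follows essentially the same route as the paper's proof: duality, the cover inequality \eqref{eqn:average-RS-bound}, positivity of the resulting quadratic form $c(\n)$ to justify inserting smooth and sieve majorants, contour-shift evaluation of the divisor-constrained Rankin--Selberg sums (the paper's \cref{lem:rs-sums}), Selberg's sieve with a lower bound on $\sum_{\N\n\le z}\lambda_{\pi\times\tilde\pi}(\n)/\N\n$ for the $1/\log z$ saving, and a deep contour shift under $x\ge D_F^{-n^2}Q^{2n}|\mS|^{\epsilon}$ for \eqref{eqn:large-sieve-general_2}. The only implementation differences are that the paper uses $\pi$-dependent Selberg weights $\rho_{\pi,z}(\d)$ inserted before the cover inequality, which dissolves your flagged uniformity issue at no cost (the off-diagonal analysis only uses $|\rho_{\pi,z}(\d)|\le 1$), and it evaluates the divisor-constrained sums on the line $\Re(s)=\epsilon>0$ rather than pushing left of $0$, so your ``hard part'' of bounding the truncated Euler factors at $\Re(s)=-\delta$ never arises.
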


\begin{proof}[Proof of \cref{thm:large_sieve2} assuming \cref{thm:large-sieve-general}]
Take $T=1$.  If $\lambda^\circ = \lambda$, then the second bound in \cref{thm:large_sieve2} follows from \eqref{eqn:large-sieve-general} and a standard dyadic decomposition argument.  The first bound in \cref{thm:large_sieve2} follows from the second, with $\pi_0=\mathbbm{1}$.
\end{proof}

To prove \cref{thm:large-sieve-general}, we require a version of \cite[Lemma 4.1]{TZ_GLn} without a coprimality constraint.  Given a compactly-supported smooth function $\phi\colon \R \to \mathbb{C}$, we define the entire function $\hat\phi(s) := \int_\R \phi(y) e^{sy} dy$.  For each integer $k\geq 0$, the bound $\hat\phi(s) \ll_{\phi,k,\Re(s)} e^{2|\mathrm{Re}(s)|} (1 + |s|)^{-k}$ holds.

\begin{lemma}
\label{lem:rs-sums}
    Fix a smooth test function $\phi$ with compact support in $[-2, 2]$.  Let $T, x \ge 1$; $\pi, \pi' \in \F_n$; $\mathfrak{C}_{\pi}, \mathfrak{C}_{\pi'} \le Q$; and $\epsilon>0$. For $\d$ squarefree, define
    \begin{equation} \label{eqn:g-pi-notation}
        g_\pi(\d)
        := 
        \prod_{\p \mid \d} (1 - L(1, \pi_\p \times \tilde \pi_\p)^{-1}).
    \end{equation}
   There holds
    \[
        \Bigg|\sum_{\substack{\n \\ \d \mid \n}} \phi\Big(T \log \frac{\N \n}{x}\Big)
        \lambda_{\pi \times \tilde\pi'}(\n)
        -
        g_\pi(\d) x
        \frac{\hat\phi(T^{-1})}{T}
        \mathop{\Res}_{s = 1} L(s, \pi \times \tilde \pi')\Bigg|\ll_{\epsilon}D_F^{-\frac{n^2}{2}}Q^{n+\epsilon} \N\d^{n^2+\epsilon} T^{\frac{[F:\Q]n^2}{2}+\epsilon}.
    \]
    Also, if $\sigma<0$, then
    \[
        \Bigg|\sum_{\substack{\n}} \phi\Big(T \log \frac{\N \n}{x}\Big)
        \lambda_{\pi \times \tilde\pi'}(\n)
        -
        x
        \frac{\hat\phi(T^{-1})}{T}
        \mathop{\Res}_{s = 1} L(s, \pi \times \tilde \pi')\Bigg|\ll_{\sigma,\epsilon}
        x^{\sigma}(D_F^{-n^2}Q^{2n} T^{[F:\Q]n^2})^{\frac{1}{2}-\sigma+\frac{\epsilon}{2n^2}}.
    \]
\end{lemma}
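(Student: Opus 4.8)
The plan is a standard Mellin-inversion and contour-shift argument, along the lines of the proof of \cite[Lemma 4.1]{TZ_GLn}, with its coprimality constraint replaced by the divisibility condition $\d \mid \n$. Since $\phi$ is smooth with compact support, $\hat\phi$ is entire and decays rapidly on vertical lines, so Laplace (Mellin) inversion gives $\phi\big(T\log\tfrac{\N\n}{x}\big) = \tfrac{1}{2\pi i}\int_{(c)} \tfrac{\hat\phi(w/T)}{T}(x/\N\n)^{w}\,dw$ for any real $c$. Taking $c=2$ and substituting this into the left-hand side — the interchange of $\sum_\n$ with $\int$ being legitimate since $\Re(w)=2T>1$ lies in the region of absolute convergence — the sum in question becomes $\tfrac{1}{2\pi i}\int_{(2T)} \tfrac{\hat\phi(w/T)}{T}x^{w}D_\d(w)\,dw$, where $D_\d(w):=\sum_{\d\mid\n}\lambda_{\pi\times\tilde\pi'}(\n)\N\n^{-w}$. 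Using the multiplicativity of $\lambda_{\pi\times\tilde\pi'}$ and the local Euler factors \eqref{eqn:RS_Dirichlet_series}, I would factor $D_\d(w) = L(w,\pi\times\tilde\pi')\,G_\d(w)$ with $G_\d(w) = \prod_{\p\mid\d}\big(1 - L(w,\pi_\p\times\tilde\pi'_\p)^{-1}\big)$; since each local inverse $L(w,\pi_\p\times\tilde\pi'_\p)^{-1} = \prod_{j,j'}\big(1 - \alpha_{j,j',\pi\times\tilde\pi'}(\p)\N\p^{-w}\big)$ is a polynomial in $\N\p^{-w}$, the function $G_\d$ is entire.

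For the first estimate, I shift the contour from $\Re(w)=2T$ to $\Re(w)=-\delta$ for a fixed small $\delta>0$, to be chosen in terms of $n$, $[F:\Q]$, and $\epsilon$. The only singularity crossed is the at-most-simple pole of $L(w,\pi\times\tilde\pi')$ at $w=1$, whose residue is $G_\d(1)\,x\,\tfrac{\hat\phi(T^{-1})}{T}\Res_{s=1}L(s,\pi\times\tilde\pi')$; as $r_{\pi\times\tilde\pi'}=1$ forces $\pi=\pi'$ (in which case $G_\d(1)=g_\pi(\d)$) and otherwise $\Res_{s=1}L(s,\pi\times\tilde\pi')=0$, this residue equals the claimed main term in every case. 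The horizontal segments vanish in the limit by the super-polynomial decay $\hat\phi(s)\ll_{\phi,k}e^{2|\Re(s)|}(1+|s|)^{-k}$ together with the polynomial growth of $L$ in vertical strips from \cref{lem:Li1}. On the line $\Re(w)=-\delta$, I estimate the remaining integral using $|x^{w}|=x^{-\delta}\le1$; the bound $|L(-\delta+it,\pi\times\tilde\pi')|\le|\mathfrak{L}(-\delta+it,\pi\times\tilde\pi')|\ll_{\delta,\epsilon}\mathfrak{C}_{\pi\times\tilde\pi'}(it)^{1/2+\delta+\epsilon}$ from \eqref{eqn:Lbound2} (valid since $|(w+1)/(w-1)|\le1$ for $\Re(w)\le0$); the conductor bounds $\mathfrak{C}_{\pi\times\tilde\pi'}(it)\ll D_F^{-n^2}Q^{2n}(3+|t|)^{[F:\Q]n^2}$ from \eqref{eqn:BH}; and $|G_\d(-\delta+it)|\ll_{n,\epsilon}\N\d^{n^2+\epsilon}$, obtained by bounding $|L(w,\pi_\p\times\tilde\pi'_\p)^{-1}|\le(1+\N\p^{2\theta_n+\delta})^{n^2}$ via $|\alpha_{j,j',\pi\times\tilde\pi'}(\p)|\le\N\p^{2\theta_n}$ from \eqref{eqn:LRS_2}, the inequality $(2\theta_n+\delta)n^2\le n^2$ (which holds once $\delta\le1-2\theta_n$, possible as $\theta_n<\tfrac12$), and a divisor bound controlling $\omega(\d)$. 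Carrying out the $t$-integral after the substitution $t=Tu$ and the elementary estimate $(3+T|u|)^{A}\ll_{A}(3T)^{A}(1+|u|)^{A}$ produces a factor $T^{[F:\Q]n^2(1/2+\delta+\epsilon)}$, and then choosing $\delta$ and the auxiliary $\epsilon$ small enough (and using $D_F\ge1$ to absorb $D_F^{-n^2(\delta+\epsilon)}\le1$) collapses the whole bound into the asserted $D_F^{-n^2/2}Q^{n+\epsilon}\N\d^{n^2+\epsilon}T^{[F:\Q]n^2/2+\epsilon}$.

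The second estimate is the special case $\d=\mathcal{O}_F$ (so $G_\d\equiv1$ and $D_\d(w)=L(w,\pi\times\tilde\pi')$), except that I now shift the contour all the way to $\Re(w)=\sigma<0$, keeping the same residue but gaining the decay $|x^{w}|=x^{\sigma}$. Estimating the integral on $\Re(w)=\sigma$ exactly as before — applying \eqref{eqn:Lbound2} with $\epsilon$ replaced by $\epsilon/(2n^2)$, and invoking the functional equation for $\Lambda(s,\pi\times\tilde\pi')$ from \eqref{eqn:Lambdaspixpi'} and Stirling's formula should it be necessary to push beyond $\Re(w)=-1$ — yields an error $\ll_{\sigma,\epsilon}x^{\sigma}(D_F^{-n^2}Q^{2n})^{1/2-\sigma+\epsilon/(2n^2)}T^{[F:\Q]n^2(1/2-\sigma+\epsilon/(2n^2))}$, which is precisely $x^{\sigma}\big(D_F^{-n^2}Q^{2n}T^{[F:\Q]n^2}\big)^{1/2-\sigma+\epsilon/(2n^2)}$, as claimed.

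The only step demanding genuine care is the bound for $G_\d(w)$ just to the left of the line $\Re(w)=0$: one has to exploit the Luo--Rudnick--Sarnak-type inequality $|\alpha_{j,j',\pi\times\tilde\pi'}(\p)|\le\N\p^{2\theta_n}$ with $\theta_n<\tfrac12$ to keep the local exponent strictly below $n^2$, so that removing the coprimality restriction costs only a factor $\N\d^{n^2+\epsilon}$ and — crucially — leaves the final bound independent of $\theta_n$, up to the harmless $Q^{\epsilon}$. Everything else is routine contour-integral bookkeeping.
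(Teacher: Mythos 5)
Your proposal is correct and follows essentially the same route as the paper, which simply defers to the contour-integration argument of \cite[Lemma 4.1]{TZ_GLn}: Mellin inversion, the factorization $D_{\kd}(w)=L(w,\pi\times\tilde\pi')G_{\kd}(w)$, a contour shift picking up only the possible pole at $w=1$, and the bounds of \cref{lem:Li1} together with \eqref{eqn:BH}; your estimate $|G_{\kd}|\ll_\epsilon\N\kd^{n^2+\epsilon}$ via \eqref{eqn:LRS_2} and $2\theta_n<1$ is exactly the point that removes the separate treatment of ramified primes. The only (immaterial) deviation is that for the first estimate you shift to $\Re(w)=-\delta$ and invoke \eqref{eqn:Lbound2}, whereas the paper shifts to just right of $\Re(w)=0$ and uses \eqref{eqn:Lbound1}; both yield the stated bound.
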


\begin{proof}
The first result is identical to the proof of \cite[Lemma 4.1]{TZ_GLn} except that we do not need to bound the contribution of the prime ideals $\p \mid \q_\pi \q_{\pi'}$ separately.  This removes a factor of $Q^{4\theta_n n^2}$ from our final bound compared to \cite{TZ_GLn}. Note that the main term vanishes unless $\pi = \pi'$.

The basis for the proof in \cite{TZ_GLn} is a contour integration argument, where the contour is pushed slightly to the right of the line $\re(s)=0$ and \eqref{eqn:Lbound1} is applied.  The second result deviates from the first only in that we push the contour even further to the left, in which case we apply \eqref{eqn:Lbound2}.
\end{proof}

\begin{proof}[Proof of \cref{thm:large-sieve-general}]
We take $\eps < 1/10$ without loss of generality. Like \cite[Theorem 4.2]{TZ_GLn}, our proof will use duality and Selberg's sieve; our key new input is \cref{prop:inequality-RS-covers}, which allows us to handle the ramified and unramified primes on the same footing.  Let $g_{\pi}(\kd)$ be as in \eqref{eqn:g-pi-notation}.  Let $z\geq 1$, and let $\rho_{\pi,z}(\kd)$ be a real-valued function such that
\begin{equation} \label{eqn:sieve-weights}
    \begin{cases} 
    \rho_{\pi,z}(\mO_F) = 1, \\ 
    \rho_{\pi,z}(\d) = 0\
    \text{ unless }\ 
    \kd \in \mD_{\pi,z} := \left\{\d : \N \d \le z,\ \d \mid P_{\pi,z}\right\},\text{ where }\ 
    P_{\pi,z} := \prod_{\substack{\N \p \le z \\ g_\pi(\p) \neq 0}} \p,\\
    |\rho_{\pi,z}(\kd)|\leq 1.
    \end{cases}
\end{equation}
We consider the general sifted sum
\begin{equation}\label{eqn:general-large-sieve-sum}
    \mL(z) := \max_{\|b\|_2 = 1} \sum_{\pi \in \mS}
    \Bigg| \sum_{\substack{\N \n \in (x, e^{1/T}x] \\ \lambda_{\pi_0 \times \tilde\pi_0}(\n) \neq 0}} b(\n) \frac{\lambda^\circ_{\pi \times \pi_0}(\n)}{\sqrt{\lambda_{\pi_0 \times \tilde\pi_0}(\n)}} \sum_{\d \mid \n} \rho_{\pi,z}(\d) \Bigg|^2,
\end{equation}
observing from \eqref{eqn:sieve-weights} that
\begin{equation} \label{eqn:key-sieve-property}
    \sum_{\d \mid \n} \rho_{\pi,z}(\d) = 1
    \qquad 
    \text{ if all prime factors } \p \mid \n \text{ satisfy } \N \p > z.
\end{equation}
We deduce \eqref{eqn:large-sieve-general}--\eqref{eqn:large-sieve-general-sifted} by taking different values of $z$.

By duality, there exists a function $w : \mS \to \mathbb{C}$ with $\sum_{\pi \in \mS} |w(\pi)|^2 = 1$ such that
\[
    \mL(z) = \sum_{\substack{\N \n \in (x, e^{1/T}x] \\ \lambda_{\pi_0 \times \tilde\pi_0}(\n) \neq 0}} 
    \Bigg| \sum_{\pi \in \mS} w(\pi) \frac{\lambda^\circ_{\pi \times \pi_0}(\n)}{\sqrt{\lambda_{\pi_0 \times \tilde\pi_0}(\n)}} \sum_{\d \mid \n} \rho_{\pi,z}(\d) \Bigg|^2.
\]
We apply \eqref{eqn:average-RS-bound} with $\pi_1 = \pi_0$, $\mS_2 = \mS$, and the weights $w(\pi) \sum_{\d \mid \n} \rho_{\pi,z}(\d)$, thus obtaining the bound
\[
    \mL(z) \le \sum_{\substack{\N \n \in (x, e^{1/T}x] \\ \lambda_{\pi_0 \times \tilde\pi_0}(\n) \neq 0}} 
    \sum_{\pi, \pi' \in \mS} \Bigg(w(\pi) \sum_{\d \mid \n} \rho_{\pi,z}(\d)\Bigg) \bar{\Bigg(w(\pi') \sum_{\d' \mid \n} \rho_{\pi',z}(\d')\Bigg)} \lambda_{\pi \times \tilde\pi'}(\n).
\]
Each summand in the $\kn$-sum is nonnegative.  Therefore, we can drop the condition $\lambda_{\pi_0 \times \tilde\pi_0}(\n) \neq 0$ and insert a smooth majorant for the indicator function of $(x,e^{1/T}x]$.  Let $\phi : \R\to[0,1]$ be an infinitely differentiable function,  supported in $[-2, 2]$ and bounding from above the indicator function of $[0,1]$. Upon swapping sums, we find that
\[
\begin{aligned}
    \mL(z) 
    &\le 
    \sum_{\N\n \in (x, e^{1/T} x]} \phi\Big(T \log \frac{\N \n}{x}\Big)
    \sum_{\pi, \pi' \in \mS} \Bigg(w(\pi) \sum_{\d \mid \n} \rho_{\pi,z}(\d)\Bigg) \bar{\Bigg(w(\pi') \sum_{\d' \mid \n} \rho_{\pi',z}(\d')\Bigg)} \lambda_{\pi \times \tilde\pi'}(\n)
    \\
    &= 
    \sum_{\pi, \pi' \in \mS} w(\pi) \bar w(\pi') \sum_{\substack{\d \in \mD_{\pi,z} \\ \d' \in \mD_{\pi',z}}} \rho_{\pi,z}(\d) \rho_{\pi',z}(\d')
    \sum_{\substack{\n \\ [\d, \d'] \mid \n}} 
    \phi\Big(T \log \frac{\N \n}{x}\Big) \lambda_{\pi \times \tilde\pi'}(\n).
\end{aligned}
\]
Using \cref{lem:rs-sums} and \eqref{eqn:sieve-weights} to evaluate the inner sums, we obtain
\[
\begin{aligned}
    \mL(z) 
    &\le 
    x \frac{\hat\phi(T^{-1})}{T} \sum_{\pi \in \mS} |w(\pi)|^2 \mathop{\Res}_{s = 1} L(s, \pi \times \tilde\pi)
    \sum_{\d, \d' \in \mD_{\pi,z}} \rho_{\pi,z}(\d) \rho_{\pi,z}(\d') g_\pi([\d,\d'])
    \\
    &+
    O_\eps\Bigg(D_F^{-\frac{n^2}{2}}Q^{n+\eps} T^{\frac{[F:\Q]n^2}{2}+\epsilon} \sum_{\pi, \pi' \in \mS} |w(\pi) w(\pi')| \sum_{\substack{\d \in \mD_{\pi,z} \\ \d' \in \mD_{\pi',z}}} \N [\d, \d']^{n^2+\eps} \Bigg),
\end{aligned}
\]
where the main term comes from the diagonal terms $\pi = \pi'$.  Since $\sum_{\N \d \le z} 1 \ll_\eps z^{1+\eps}$ and $\sum_\pi |w(\pi)|^2 = 1$, the error term is $O_\eps(D_F^{-n^2/2}Q^{n+\eps} z^{2n^2+2+\eps} T^{(1/2)[F:\Q]n^2+\epsilon} |\mS| )$, as in \cite[(4.16)]{TZ_GLn}.

To evaluate the main term, we note (as in \cite[p.\,15]{TZ_GLn}) that \cite[Theorem 7.1]{friedlander2010opera} implies the existence of Selberg sieve weights $\rho_\pi(\d)$ satisfying both \eqref{eqn:sieve-weights} and
\[
\begin{aligned}
    \sum_{\d, \d' \in \mD_{\pi,z}} \rho_{\pi,z}(\d) \rho_{\pi,z}(\d') g_\pi([\d,\d'])=
    \Bigg(\sum_{\substack{\N \d \le z \\ \d \text{ square-free}}} \prod_{\p \mid \d} \sum_{j = 1}^\infty \frac{\lambda_{\pi \times \tilde\pi}(\p^j)}{\N \p^j}\Bigg)^{-1}
    \le 
    \Bigg(\sum_{\N \n \le z} \frac{\lambda_{\pi \times \tilde\pi}(\n)}{\N \n}\Bigg)^{-1}.
\end{aligned}
\]
Using $\hat\phi(T^{-1}) \ll 1$ and $\sum_{\pi \in \mS} |w(\pi)|^2 = 1$, we conclude that
\begin{equation}
\label{eqn:general-large-sieve-bound}
    \mL(z) \ll_{\epsilon} \frac{x}{T} \max_{\pi \in \mS} \frac{\mathop{\Res}_{s = 1} L(s, \pi \times \tilde\pi)}{\sum_{\N \n \le z} \frac{\lambda_{\pi \times \tilde\pi}(\n)}{\N \n}}
    +
    D_F^{-\frac{n^2}{2}}Q^{n+\eps} z^{2n^2+2+\eps} T^{\frac{[F:\Q]n^2}{2}+\epsilon} |\mS|.
\end{equation}

To deduce \eqref{eqn:large-sieve-general}, we first note that if $\sum_{\N \n \in (x, e^{1/T}x]} |a(\n)|^2 \lambda_{\pi_0 \times \tilde\pi_0}(\n)=0$, then the result is trivial.  When this sum is nonzero, we take $z=1$. Combining \cref{lem:Li1}, \eqref{eqn:general-large-sieve-sum}, \eqref{eqn:key-sieve-property}, and \eqref{eqn:general-large-sieve-bound}, we obtain
\begin{equation}
\label{eqn:pre_b_choice}
    \max_{\|b\|_2 = 1} \sum_{\pi \in \mS}
    \Bigg| \sum_{\substack{\N \n \in (x, e^{1/T}x] \\ \lambda_{\pi_0 \times \tilde\pi_0}(\n) \neq 0}} b(\n) \frac{\lambda^\circ_{\pi \times \pi_0}(\n)}{\sqrt{\lambda_{\pi_0 \times \tilde\pi_0}(\n)}} \Bigg|^2
    \ll_{\epsilon} Q^\eps (x + D_F^{-\frac{n^2}{2}}Q^n T^{\frac{[F:\Q]n^2}{2}+\epsilon}|\mS|).
\end{equation}
If we choose
\[
    b(\n) := \frac{a(\n) \sqrt{\lambda_{\pi_0 \times \tilde\pi_0}(\n)}}{\sum_{\N \ka \in (x, e^{1/T}x]} |a(\ka)|^2 \lambda_{\pi_0 \times \tilde\pi_0}(\ka)},
\]
then \eqref{eqn:pre_b_choice} reduces to
\[
   \sum_{\pi \in \mS}
    \Bigg| \sum_{\substack{\N \n \in (x, e^{1/T}x] \\ \lambda_{\pi_0 \times \tilde\pi_0}(\n) \neq 0}} a(\n) \lambda^\circ_{\pi \times \pi_0}(\n) \Bigg|^2
    \ll_{\epsilon} Q^\eps (x + D_F^{-\frac{n^2}{2}}Q^n T^{\frac{[F:\Q]n^2}{2}+\epsilon}|\mS|)\sum_{\N \n \in (x, e^{1/T}x]} |a(\n)|^2 \lambda_{\pi_0 \times \tilde\pi_0}(\n).
\]
To finish, note that if $\lambda_{\pi_0 \times \tilde\pi_0}(\kn) = 0$, then $\lambda^\circ_{\pi \times \pi_0}(\n) = 0$ (per \eqref{eqn:pointwise-RS-bound} with $\lambda^+ = \lambda$).  Therefore, the condition that $\lambda_{\pi_0\times\tilde{\pi}_0}(\kn)\neq 0$ is unnecessary.  Removing this condition, we arrive at \eqref{eqn:large-sieve-general}.

To deduce \eqref{eqn:large-sieve-general_2}, we take $T=z=1$.  We proceed as in the proof of \eqref{eqn:large-sieve-general}, except that we apply the second estimate in \cref{lem:rs-sums} with $\sigma = 1-2\epsilon^{-1}$. This choice of $\sigma$ and the assumption that $x \ge D_F^{-n^2} Q^{2n} |\mS|^\eps$ ensure that $
    |\mS| x^{\sigma}(D_F^{-n^2}Q^{2n})^{\frac{1}{2}-\sigma+\frac{\epsilon}{2n^2}}
    \ll_{\epsilon} Q^{\epsilon}x$.  This allows us to absorb the contribution of the off-diagonal terms with $\pi \neq \pi'$ into that of the diagonal terms.

To deduce \eqref{eqn:large-sieve-general-sifted}, we choose
\[
    b(\n) := 
    \frac{a(\n) \sqrt{\lambda_{\pi_0 \times \tilde\pi_0}(\n)}}{\sum_{\substack{\N \ka \in (x, e^{1/T}x] \\ \p \mid \ka \implies \N\p > z}} |a(\ka)|^2 \lambda_{\pi_0 \times \tilde\pi_0}(\ka)}
\]
when $\N\n \in (x, e^{1/T}x]$ and all prime factors $\p \mid \n$ have $\N \p > z$, and $b(\kn)=0$ otherwise.  If there exists an effectively computable constant $\Cr{large_sieve_z}=\Cr{large_sieve_z}(n,[F:\Q],\eps)\geq 1$ such that
\begin{equation}
\label{eqn:diagonal-rs-lower-bound}
    \sum_{\N \n \le z} \frac{\lambda_{\pi \times \tilde\pi}(\n)}{\N \n}
    \gg_{\eps}
    (\log z) \mathop{\Res}_{s = 1} L(s, \pi \times \tilde\pi),\qquad z \geq \Cr{large_sieve_z} D_F^{-n^2/2}Q^{n+\eps},
\end{equation}
then \eqref{eqn:large-sieve-general-sifted} would now follow from \eqref{eqn:general-large-sieve-sum}, \eqref{eqn:key-sieve-property}.  It remains to show \eqref{eqn:diagonal-rs-lower-bound}.  By inserting a smooth minorant and applying \cref{lem:rs-sums} with $T = 1$ and $\d = \mO_F$, we infer that if $y\geq 1$, then
\[
    \sum_{\N \n \in (y, ey]} \frac{\lambda_{\pi \times \tilde\pi}(\n)}{\N \n}
    \gg
    \mathop{\Res}_{s=1} L(s, \pi \times \tilde\pi) + O_\eps\Bigg(\frac{D_F^{-\frac{n^2}{2}}Q^{n+\frac{\eps}{3}}}{y}\Bigg).
\]
Decomposing the interval $(z^{1-\frac{\eps}{3n}}, z]$ into $\asymp_\eps \log z$ dyadic subintervals, we deduce that
\[
    \sum_{\N \n \le z} \frac{\lambda_{\pi \times \tilde\pi}(\n)}{\N \n}
    \gg_\eps
    1 + (\log z) \mathop{\Res}_{s=1} L(s, \pi \times \tilde\pi) + O_\eps\Bigg(\frac{D_F^{-\frac{n^2}{2}}Q^{n+\frac{\eps}{3}}}{z^{1-\frac{\eps}{3n}}} \log z\Bigg).
\]
For $z$ as in \eqref{eqn:diagonal-rs-lower-bound}, the error term has absolute value at most $\frac{1}{2}$.  This finishes our proof of \eqref{eqn:diagonal-rs-lower-bound}.
\end{proof}

\begin{corollary}
\label{cor:sifted-bound-single-rep}
Let $\eps > 0$, $n_0 \in \mathbb{N}$, $x\ge 1$, and $\pi_0\in\mathfrak{F}_{n_0}$.  If $T \ge 1$ and $z \geq \Cr{large_sieve_z} D_F^{-n_0^2/2}\mathfrak{C}_{\pi_0}^{n_0+\eps}$, then
\begin{equation}
\label{eqn:sifted-bound-single-rep}
    \sum_{\substack{\N \n \in (x, e^{1/T} x] \\ \p \mid \n \Rightarrow \N \p > z}}  \lambda_{\pi_0 \times \tilde{\pi}_0}(\n)
    \ll_{\epsilon}
    \frac{x}{T \log z} + D_F^{-\frac{n_0^2}{2}}\mathfrak{C}_{\pi_0}^{n_0+\eps} z^{2n_0^2+2+\eps} T^{\frac{[F:\Q]n_0^2}{2}+\epsilon}.
\end{equation}
\end{corollary}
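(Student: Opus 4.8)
The plan is to obtain \cref{cor:sifted-bound-single-rep} as a direct specialization of the sifted large sieve inequality \eqref{eqn:large-sieve-general-sifted} to a singleton family. Concretely, I would apply \cref{thm:large-sieve-general} with $n := n_0$, $\mS := \{\pi_0\}$ (so that $Q = \max_{\pi \in \mS} \mathfrak{C}_\pi = \mathfrak{C}_{\pi_0}$), the distinguished representation taken to be $\tilde\pi_0$, the coefficients $\lambda^\circ := \lambda$ (legitimate, since $L(s, \pi \times \tilde\pi')$ is a positive semi-definite cover for itself by \cref{prop:inequality-RS-covers}$(i)$), and the weights $a(\n) \equiv 1$. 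With these choices the hypothesis $z \ge \Cr{large_sieve_z} D_F^{-n_0^2/2}\mathfrak{C}_{\pi_0}^{n_0+\eps}$ in the corollary is exactly the hypothesis that \eqref{eqn:large-sieve-general-sifted} demands (with $n = n_0$ and $Q = \mathfrak{C}_{\pi_0}$), so nothing further needs to be arranged on the input side.

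The next step is to unwind both sides of \eqref{eqn:large-sieve-general-sifted}. Write $S$ for the sum appearing in \eqref{eqn:sifted-bound-single-rep}. Since $\mS = \{\pi_0\}$, since $\lambda^\circ_{\pi_0 \times \tilde\pi_0}(\n) = \lambda_{\pi_0 \times \tilde\pi_0}(\n)$, and since $a \equiv 1$, the left-hand side of \eqref{eqn:large-sieve-general-sifted} collapses to $|S|^2 = S^2$. On the right-hand side we have $|\mS| = 1$ and $|a(\n)|^2 = 1$, while the weight is $\lambda_{\tilde\pi_0 \times \widetilde{\tilde\pi}_0}(\n) = \lambda_{\tilde\pi_0 \times \pi_0}(\n) = \lambda_{\pi_0 \times \tilde\pi_0}(\n)$, using the symmetry $L(s, \pi \times \pi') = L(s, \pi' \times \pi)$ of Rankin--Selberg $L$-functions. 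Hence the right-hand side equals
\[
    \frac{1}{\log z}\Big(\frac{x}{T} + D_F^{-\frac{n_0^2}{2}}\mathfrak{C}_{\pi_0}^{n_0+\eps} z^{2n_0^2+2+\eps} T^{\frac{[F:\Q]n_0^2}{2}+\eps}\Big) S,
\]
so that $S^2 \ll_\eps \frac{1}{\log z}\big(\frac{x}{T} + D_F^{-n_0^2/2}\mathfrak{C}_{\pi_0}^{n_0+\eps} z^{2n_0^2+2+\eps} T^{[F:\Q]n_0^2/2+\eps}\big) S$.

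To conclude, I would observe that $\lambda_{\pi_0 \times \tilde\pi_0}(\n) \ge 0$ for every $\n$, since this is the diagonal of the positive semi-definite matrix $(\lambda_{\pi \times \tilde\pi'}(\n))_{\pi,\pi'}$ furnished by \cref{prop:inequality-RS-covers}$(i)$; in particular $S \ge 0$. If $S = 0$ the corollary is trivial, and otherwise dividing the previous inequality by $S$ yields $S \ll_\eps \frac{1}{\log z}\big(\frac{x}{T} + D_F^{-n_0^2/2}\mathfrak{C}_{\pi_0}^{n_0+\eps} z^{2n_0^2+2+\eps} T^{[F:\Q]n_0^2/2+\eps}\big)$. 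Finally, the hypothesis on $z$ together with $\mathfrak{C}_{\pi_0} \ge D_F^{n_0} 3^{n_0}$ (from \eqref{eqn:analytic_conductor_def}) forces $z \ge 3$, so $\log z > 1$; thus the error term may be freed of its $\frac{1}{\log z}$ factor while the main term becomes $\frac{x}{T\log z}$, which is precisely \eqref{eqn:sifted-bound-single-rep}.

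There is no real obstacle here: the corollary is a one-representation instance of \eqref{eqn:large-sieve-general-sifted}, and the entire Selberg-sieve-plus-duality apparatus behind the theorem is reused as a black box. The only points requiring a little care are the bookkeeping with contragredients---one must arrange the single coefficient on the left and the weight on the right to both be $\lambda_{\pi_0 \times \tilde\pi_0}$, which is why the distinguished representation is chosen to be $\tilde\pi_0$---and the nonnegativity of $\lambda_{\pi_0 \times \tilde\pi_0}$, which is exactly what allows one to pass from the quadratic bound $S^2 \ll (\cdots)\,S$ to the linear bound on $S$.
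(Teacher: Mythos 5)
Your proposal is correct and is essentially the paper's own proof, which simply applies \eqref{eqn:large-sieve-general-sifted} with $\mS=\{\pi_0\}$, $\lambda^\circ=\lambda$, and $a(\n)=1$; you have merely spelled out the bookkeeping (the contragredient choice, the nonnegativity of $\lambda_{\pi_0\times\tilde\pi_0}$ allowing division by $S$, and $\log z\gg 1$) that the paper leaves implicit. No issues.
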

\begin{proof}
We apply \eqref{eqn:large-sieve-general-sifted} with $\mS = \{\pi_0\}$, $\lambda^\circ = \lambda$, and $a(\n) = 1$.
\end{proof}

\section{Proofs of \cref{thm:density,cor:family}}
\label{sec:family_proofs}

Let $\kp$ be a prime ideal of $\mathcal{O}_F$, $n\geq 2$, $\mathcal{S}\subseteq \mathfrak{F}_n$ a finite subset, and $Q=\max_{\pi\in\mathcal{S}}\mathfrak{C}_{\pi}$.  For $\theta\geq 0$, we define $\mathcal{S}(\kp,\theta) = \{ \pi \in \mathcal{S} : \max_{1\leq j\leq n} |\alpha_{j,\pi}(\kp)| \ge \N \kp^\theta \}$.  Let
\begin{equation}
\label{eqn:family_defs}
\begin{gathered}
\mathcal{M}=\begin{cases}\lceil D_F^{-n^2}Q^{2n}|\mathcal{S}(\kp,\theta)|^{\epsilon}\rceil&\mbox{if $\theta\leq 1/4$,}\\
\lceil D_F^{-n^2/2}Q^{n}|\mathcal{S}(\kp,\theta)|^{1+\epsilon}\rceil&\mbox{if $\theta>1/4$,}
\end{cases}\qquad \N\kp\leq \mathcal{M}^{\frac{1}{n+1}},\qquad M = \Big\lfloor\frac{\log \mathcal{M}}{\log\N\kp}-n\Big\rfloor\geq 1.
\end{gathered}
\end{equation}
We use a lower bound for power sums, originally due to S{\'o}s and Tur{\'a}n \cite{SosTuran}.

\begin{proposition}[{\cite{KolesnikStraus}}]
\label{prop:Turan}
Let $M,N\in \mathbb{N}$.  For any $N$ complex numbers $z_1,\ldots,z_N$, there exists an integer $k\in[M+1,M+N]$ such that $|z_1^k+\cdots+z_N^k|\geq 1.007|z_1|^k (4e(1+M/N))^{-N}$. 
\end{proposition}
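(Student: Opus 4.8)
\cref{prop:Turan} is a classical inequality in the theory of power sums, and in the body of the paper it suffices to quote \cite{KolesnikStraus}; here I sketch how one proves it via Tur{\'a}n's power-sum method (his ``second main theorem''), in the sharpened form of Kolesnik and Straus. First, one relabels the $z_j$ so that $|z_1| = \max_{1 \le j \le N} |z_j|$ and rescales all of them by $1/|z_1|$, which we may assume is nonzero (otherwise every $s_k := z_1^k + \cdots + z_N^k$ vanishes and the claim is trivial). This multiplies $s_k$ by $|z_1|^{-k}$ and leaves the inequality invariant, so we may assume $|z_1| = 1 \ge |z_j|$ for all $j$, and the goal becomes to produce some $k \in [M+1, M+N]$ with $|s_k| \ge 1.007\,(4e(1 + M/N))^{-N}$.

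The key step is a duality identity. For a polynomial $P(z) = \sum_{m=0}^{N-1} p_m z^m$ of degree less than $N$, consider the generating function
\[
    G(z) := \sum_{\nu \ge 1} s_\nu z^\nu = \sum_{j=1}^N \frac{z_j z}{1 - z_j z},
\]
which is holomorphic for $|z| < 1$ and whose only poles lie at the points $1/z_j$, all satisfying $|1/z_j| \ge 1$. Integrating $G(z) P(z) z^{-M-N-1}$ around a small circle about the origin and letting the radius tend to infinity — the integrand decays like $|z|^{-M-2}$ there, since $G(z)$ is bounded near $\infty$ — a residue computation (picking up only the residue at $0$) yields
\[
    \sum_{m=0}^{N-1} p_m\, s_{M+N-m} \;=\; \sum_{j=1}^N z_j^{M+N}\, P(1/z_j).
\]
In particular, by the pigeonhole principle,
\[
    \max_{M+1 \le k \le M+N} |s_k| \;\ge\; \frac{\bigl| \sum_{j=1}^N z_j^{M+N}\, P(1/z_j) \bigr|}{\sum_{m=0}^{N-1} |p_m|}.
\]

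It then remains to exhibit a polynomial $P$ for which the numerator on the right is at least $1$ while the denominator is at most $(4e(1+M/N))^N / 1.007$. The model choice is the Lagrange-type kernel $P(z) = \prod_{j=2}^N \frac{1 - z_j z}{1 - z_j / z_1}$, which forces $P(1/z_1) = 1$ and $P(1/z_j) = 0$ for $j \ge 2$; its coefficients are normalized elementary symmetric functions of $z_2, \dots, z_N$, estimated using $|z_j| \le 1$ together with binomial bounds of the shape $\binom{M+N}{N} \le (e(1+M/N))^N$. It is through the latter that the factors $e$ and $1 + M/N$ enter: over the window $[M+1, M+N]$ the powers $z_j^\nu$ can already carry substantial cancellation, and this is what the $(1+M/N)^{-N}$ loss quantifies.

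The main obstacle is robustness against clustering of the $z_j$: when several $z_j$ approach $z_1$, the denominators $1 - z_j/z_1$ in this kernel degenerate and the naive bound on $\sum_m |p_m|$ breaks down. This is circumvented by a continuity/perturbation argument reducing to the case of well-separated $z_j$ — or, equivalently, by an Abel-summation reorganization of $\sum_j z_j^{M+N} P(1/z_j)$ with the $z_j$ ordered by decreasing modulus, exploiting that all partial sums $\sum_{j \le k} 1 = k$ are nonzero (this is the feature that, in the general weighted version of Tur{\'a}n's theorem, becomes $\min_k |\sum_{j \le k} b_j|$; here all weights equal $1$). Optimizing the extremal polynomial and tracking the constants carefully through exactly this step is what upgrades the S{\'o}s--Tur{\'a}n bound to the stated $4e$ and $1.007$; the remaining estimates are routine.
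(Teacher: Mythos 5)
The paper offers no proof of \cref{prop:Turan}: the statement is quoted directly from \cite{KolesnikStraus} (with \cite{SosTuran} as the weaker antecedent and Makai cited for the near-optimality of $4e$), so the only question is whether your sketch would stand on its own. Its framework is the correct one. The duality identity $\sum_{m=0}^{N-1}p_m\,s_{M+N-m}=\sum_{j}z_j^{M+N}P(1/z_j)$ is valid --- indeed it follows by simply expanding $P(1/z_j)$ and swapping the sums, no contour needed; as written, your contour argument actually picks up the residues at the points $1/z_j$ as well as at $0$, and those residues are precisely where the right-hand side comes from. The pigeonhole reduction to bounding $\sum_m|p_m|$ for a polynomial normalized by $P(1/z_1)=1$ is the standard strategy, and the normalization $|z_1|=\max_j|z_j|=1$ is legitimate since it only strengthens the target inequality.

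The genuine gap is that the required polynomial is never produced. The Lagrange kernel $\prod_{j\ge 2}\frac{1-z_jz}{1-z_j/z_1}$ admits no uniform bound on $\sum_m|p_m|$ because its denominators degenerate as the $z_j$ cluster near $z_1$, and a ``continuity/perturbation argument'' cannot repair this: the estimate you would perturb from itself blows up as the separation shrinks, so there is nothing to pass to the limit with. The Abel-summation alternative you allude to is the right direction for unit weights (the partial sums $\sum_{i\le j}1=j$ are bounded below by $1$), but actually constructing the extremal $P$ and proving $\sum_m|p_m|\le(4e(1+M/N))^{N}/1.007$ is exactly the content of \cite{KolesnikStraus}; the constants $4e$ and $1.007$ are the entire reason that reference is cited over \cite{SosTuran}, and in your sketch they are asserted rather than derived. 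Since these constants feed directly into the numerical exponent in \cref{thm:LFZDE}, deferring ``optimizing the extremal polynomial and tracking the constants'' defers the whole theorem: your proposal is a sound outline of the method, but not a proof of the stated inequality.
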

\begin{remark}
Makai \cite{Makai} showed that the constant $4e$ is essentially optimal.
\end{remark}

By \cref{prop:Turan} and our definition of $\mathcal{S}(\kp,\theta)$, we find that for all $\pi\in\mathcal{S}(\kp,\theta)$, there exists an integer $k_{\pi}\in[M+1,M+n]$ such that
\[
\Bigg|\sum_{j=1}^n \frac{\alpha_{j,\pi}(\kp)^{k_{\pi}}}{k_{\pi}}\Bigg|^2=\frac{1}{k_{\pi}^2}\Bigg|\sum_{j=1}^n \alpha_{j,\pi}(\kp)^{k_{\pi}}\Bigg|^2\geq \frac{1.007^2}{k_{\pi}^2}\Bigg|\frac{\displaystyle\max_{1\leq j\leq n}|\alpha_{j,\pi}(\kp)|^{k_{\pi}}}{(4e(1+\frac{M+n}{n}))^n}\Bigg|^2\gg \frac{\N\kp^{2k_{\pi} \theta}}{M^{2n+2}}\gg\frac{(\mathcal{M}\N\kp^{-n})^{2\theta}}{\mathcal{M}^{\epsilon}}.
\]
We now sum over $\pi\in\mathcal{S}(\kp,\theta)$, thus obtaining
\begin{equation}
\label{eqn:power_sum_alpha}
\frac{(\mathcal{M}\N\kp^{-n})^{2\theta}}{\mathcal{M}^{\epsilon}}|\mathcal{S}(\kp,\theta)|\ll\sum_{k=M+1}^{M+n}\,\sum_{\pi\in\mathcal{S}(\kp,\theta)}\Bigg|\sum_{j=1}^n \frac{\alpha_{j,\pi}(\kp)^{k}}{k}\Bigg|^2\ll \max_{k\in[M+1,M+n]}\sum_{\pi\in\mathcal{S}(\kp,\theta)}\Bigg|\sum_{j=1}^n \frac{\alpha_{j,\pi}(\kp)^{k}}{k}\Bigg|^2.
\hspace{-2.3mm}
\end{equation}

We estimate the sum on the far right-hand side using the large sieve inequalities in \cref{thm:large-sieve-general} with $\pi_0=\mathbbm{1}$, $L^\circ(s, \pi \times \tilde\pi') = \log L(s, \pi \times \tilde\pi')$ (which is permissible by \cref{prop:inequality-RS-covers}(i)), $x=\frac{1}{2}\N\kp^k$, and $a(\kn)$ the indicator function for the condition $\kn=\kp^k$.  Specifically, we apply \eqref{eqn:large-sieve-general} if $\theta>\frac{1}{4}$ and \eqref{eqn:large-sieve-general_2} if $\theta\leq \frac{1}{4}$.  Our choices of $\kp$, $k$, $x$, and $\mathcal{M}$ ensure that $x\asymp \mathcal{M}$ and
\begin{equation}
\label{eqn:power_sum_alpha_2}
\max_{k\in[M+1,M+n]}\sum_{\pi\in\mathcal{S}(\kp,\theta)}\Bigg|\sum_{j=1}^n \frac{\alpha_{j,\pi}(\kp)^{k}}{k}\Bigg|^2\ll_{\epsilon}Q^{\epsilon}\mathcal{M}.
\end{equation}
We infer from \eqref{eqn:power_sum_alpha} and \eqref{eqn:power_sum_alpha_2} that
\begin{equation}
\label{eqn:nice_S_bound_pre}
|\mathcal{S}(\kp,\theta)|\ll_{\epsilon}\N\kp^{2n\theta}\mathcal{M}^{1-2\theta+\epsilon}Q^{\epsilon},\qquad \text{ for } \N\kp\leq \mathcal{M}^{\frac{1}{n+1}}.
\end{equation}

\begin{proof}[Proof of \cref{cor:family}]
We apply \eqref{eqn:nice_S_bound_pre} to the case where $\mathcal{S}=\mathfrak{F}_n(Q)$ and $\theta=0$.  We thus obtain, for $0<\epsilon<1$ and $\kp$ a prime ideal of $\mathcal{O}_F$ such that $\N\kp\leq (D_F^{-n^2}Q^{2n})^{1/(n+1)}$, the bound
\[
|\mathcal{S}(\kp,0)|\ll_{\epsilon}(D_F^{-n^2}Q^{2n}|\mathcal{S}(\kp,0)|^{\epsilon})^{1+\epsilon}Q^{\epsilon}\leq (D_F^{-n^2}Q^{2n}|\mathfrak{F}_n(Q)|^{\epsilon})^{1+\epsilon}Q^{\epsilon}.
\]

Suppose to the contrary that $\pi\in\mathfrak{F}_n(Q)$ has satisfies $\#\{\kp\mid\kq_{\pi}\}=\#\{\kp\colon \N\kp\leq Q^{\epsilon}\}$.  Since $\N\kq_{\pi}\leq Q$, it follows from \cite[Lemma 7.3]{ThornerZhuo} and the bound $[F:\Q]\ll \log Q$ that $\#\{\kp\mid\kq_{\pi}\}\ll (\log Q)^2$.  However, it follows from work of Zaman (see \cite[Theorem 1.3.1]{Zaman_Thesis}) that there exists an effectively computable constant $\Cl[abcon]{family_prime}=\Cr{family_prime}(\epsilon)>0$ such that if $Q\geq D_F^{\Cr{family_prime}}$, then $\#\{\kp\colon \N\kp\leq Q^{\epsilon}\}\gg_{\epsilon} Q^{\epsilon/3}$.  This is a contradiction, so we must conclude that if $\epsilon>0$, $Q\geq  D_F^{\Cr{family_prime}}$, and $\pi\in\mathfrak{F}_n(Q)$, then there exists a prime ideal $\kp$ with $\N\kp\leq Q^{\epsilon}$ such that $\kp\nmid \kq_{\pi}$.

Our unitary normalization of $\pi\in\mathfrak{F}_n(Q)$ ensures that if $\kp\nmid\kq_{\pi}$, then $\max_{1\leq j\leq n}|\alpha_{j,\pi}(\kp)|\geq 1$.  Therefore, it follows from the preceding paragraph that if $Q\geq D_F^{\Cr{family_prime}}$, then for all $\pi\in\mathfrak{F}_n(Q)$, there exists a prime ideal $\kp$ with $\N\kp\leq Q^{\epsilon}$ such that $\max_{1\leq j\leq n}|\alpha_{j,\pi}(\kp)|\geq 1$.  Consequently, we have that
\begin{align*}
|\mathfrak{F}_n(Q)|&\leq \sum_{\N\kp\leq Q^{\epsilon}}|\mathcal{S}(\kp,0)|\ll_{\epsilon}(D_F^{-n^2}Q^{2n}|\mathfrak{F}_n(Q)|^{\epsilon})^{1+\epsilon}Q^{2\epsilon}.
\end{align*}
Replacing $\epsilon$ with $\epsilon/(7n)$ and solving for $|\mathfrak{F}_n(Q)|$, we prove the theorem.
\end{proof}

\begin{proof}[Proof of \cref{thm:density}]

Let $Q\geq D_F^{\Cr{family}}$.  Since $|\mathcal{S}(\kp,\theta)|\leq |\mathfrak{F}_n(Q)|$, it follows from \cref{cor:family}, \eqref{eqn:family_defs}, \eqref{eqn:nice_S_bound_pre}, and a rescaling of $\epsilon$ that if
\[
\mathscr{M}=\begin{cases}
	D_F^{-n^2}Q^{2n}&\mbox{if $\theta\leq 1/4$,}\\
	D_F^{-n^2/2}Q^n |\mathcal{S}(\kp,\theta)|&\mbox{if $\theta>1/4$,}
\end{cases}
\]
then $|\mathcal{S}(\kp,\theta)|\ll_{\epsilon}\N\kp^{2n\theta}\mathscr{M}^{1-2\theta}Q^{\epsilon}$ when $\N\kp\leq \mathscr{M}^{\frac{1}{n+1}}$, establishing \cref{thm:density} in this range of $\N\kp$.  (One must solve for $|\mathcal{S}(\kp,\theta)|$ when $\theta>\frac{1}{4}$.)  If $\N\kp>\mathscr{M}^{1/(n+1)}$, then \cref{thm:density} is trivial compared to \cref{cor:family}.
\end{proof}

\section{Proof of \cref{thm:ZDE}}
\label{sec:zero-density}

When the bounds in \cref{thm:large-sieve-general} are combined with Plancherel's theorem, we can deduce several ``hybrid'' large sieve inequalities.  Here are a few examples.

\begin{corollary}
\label{cor:MVT_mu}
	Let $\pi_0\in\mathfrak{F}_{n_0}$, $Q,T\geq 1$, and $\epsilon>0$.  If $X\geq D_F^{-\frac{n^2}{2}}Q^{n+\epsilon}T^{\frac{n^2 [F:\Q]}{2}+1+\epsilon}|\mathcal{S}|$, $Y\geq e$, and $\log Y\asymp_{\epsilon} \log X$, then
	{\small\begin{align*}
    \begin{gathered}
    \sum_{\pi\in\mathcal{S}}\int_{-T}^{T}\Bigg|\sum_{\substack{\N\kn > X}}\frac{\mu_{\pi}(\kn)}{\N\kn^{1+\frac{1}{\log Y}+iv}}\Bigg|^2 dv \ll_{\epsilon} Q^{\epsilon}T^{\epsilon}\log X,\quad \sum_{\pi\in\mathcal{S}}\int_{-T}^{T}\Bigg|\sum_{\substack{\N\kn > X}}\frac{\mu_{\pi\times\pi_0}(\kn)}{\N\kn^{1+\frac{1}{\log Y}+iv}}\Bigg|^2 dv \ll_{\epsilon} \mathfrak{C}_{\pi_0}^{\epsilon}Q^{\epsilon}T^{\epsilon}\log X,\\
	\sum_{\pi\in\mathcal{S}}\int_{-T}^{T}\Bigg|\sum_{\substack{\N\kn\leq X}}\frac{\mu_{\pi}(\kn)}{\N\kn^{\frac{1}{2}+iv}}\Bigg|^2 dv\ll_{\epsilon} Q^{\epsilon}T^{\epsilon}X\log X,\quad 
	\sum_{\pi\in\mathcal{S}}\int_{-T}^{T}\Bigg|\sum_{\substack{\N\kn\leq X}}\frac{\mu_{\pi\times\pi_0}(\kn)}{\N\kn^{\frac{1}{2}+iv}}\Bigg|^2 dv\ll_{\epsilon} \mathfrak{C}_{\pi_0}^{\epsilon}Q^{\epsilon}T^{\epsilon}X\log X.
    \end{gathered}
	\end{align*}}%
\end{corollary}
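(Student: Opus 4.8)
The four bounds have the same structure, and the two for $\mu_\pi$ are the $\pi_0=\mathbbm{1}$ cases of the two for $\mu_{\pi\times\pi_0}$ (using $\mathfrak{C}_{\mathbbm{1}}\asymp D_F\le Q$ to absorb the resulting $D_F^\epsilon$ into $Q^\epsilon$), so I would only treat $\mu_{\pi\times\pi_0}$. By \cref{prop:inequality-RS-covers}(i), $L(s,\pi\times\tilde\pi')$ is a positive semi-definite cover for $L(s,\pi\times\tilde\pi')^{-1}$, so \cref{thm:large-sieve-general} applies with $\lambda^\circ_{\pi\times\pi_0}(\kn)=\mu_{\pi\times\pi_0}(\kn)$. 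The plan is threefold: use Plancherel to pass from the $v$-average over $[-T,T]$ to an average over $x$ of $|\sum_{\N\kn\in(x,e^{1/T}x]}c(\kn)|^2$, where $c(\kn):=\mu_{\pi\times\pi_0}(\kn)\N\kn^{-\sigma}$ with $\sigma=\tfrac12$ supported on $\N\kn\le X$ for the second pair of bounds and $\sigma=1+\tfrac1{\log Y}$ supported on $\N\kn>X$ for the first pair; then apply \eqref{eqn:large-sieve-general} for each fixed $x$; then integrate back. For the first step, multiply the Dirichlet sum by $\hat\psi(v)$, where $\psi:=T\cdot\mathbbm{1}_{(0,1/T]}$, so that $\hat\psi$ is a sinc-type function with $|\hat\psi(v)|\gg 1$ on $[-T,T]$; using $\N\kn^{-iv}\hat\psi(v)=\widehat{\psi(\,\cdot-\log\N\kn)}(v)$, the product becomes $\widehat{G}(v)$ with $G(z)=T\sum_{\N\kn\in(x,e^{1/T}x]}c(\kn)$, $x=e^{z-1/T}$, and Plancherel gives
\[
\int_{-T}^T\Big|\sum_\kn c(\kn)\N\kn^{-iv}\Big|^2 dv \ll T^2\int_0^\infty\Big|\sum_{\N\kn\in(x,e^{1/T}x]}c(\kn)\Big|^2\frac{dx}{x}.
\]
The window $(x,e^{1/T}x]$ produced here is exactly the one appearing in \cref{thm:large-sieve-general}. (For the tail $\N\kn>X$, the Dirichlet series converges on $\Re(s)=\sigma$ by the standard zero-free region; alternatively one works with finite truncations $\sum_{X<\N\kn\le Z}$, all bounds being uniform in $Z$.)

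Summing over $\pi\in\mathcal{S}$ and invoking \eqref{eqn:large-sieve-general} for each $x$ (with $a(\kn)$ the appropriate restriction of $\N\kn^{-\sigma}$) then bounds the left-hand side by $T^2 Q^\epsilon$ times $\int_0^\infty(\tfrac xT+D_F^{-n^2/2}Q^nT^{[F:\Q]n^2/2+\epsilon}|\mathcal{S}|)\big(\sum_{\N\kn\in(x,e^{1/T}x]}\N\kn^{-2\sigma}\lambda_{\pi_0\times\tilde\pi_0}(\kn)\big)\tfrac{dx}{x}$. The crucial manoeuvre is to swap the order of integration \emph{before} estimating the inner sum: for each $\kn$, the set $\{x:\N\kn\in(x,e^{1/T}x]\}$ is an interval on which $x\asymp\N\kn$ and which has $\tfrac{dx}{x}$-measure exactly $\tfrac1T$, so the above collapses to $Q^\epsilon\big(\sum_\kn\N\kn^{1-2\sigma}\lambda_{\pi_0\times\tilde\pi_0}(\kn)+T\cdot D_F^{-n^2/2}Q^nT^{[F:\Q]n^2/2+\epsilon}|\mathcal{S}|\sum_\kn\N\kn^{-2\sigma}\lambda_{\pi_0\times\tilde\pi_0}(\kn)\big)$, the sums being over $\N\kn\le X$ (resp.\ $\N\kn>X$). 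This swap is what makes the surplus powers of $T$ cancel and sidesteps lattice-point error terms.

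It then remains to estimate two Rankin--Selberg sums. When $\sigma=\tfrac12$: $\sum_{\N\kn\le X}\N\kn^{-1}\lambda_{\pi_0\times\tilde\pi_0}(\kn)\ll_\epsilon\mathfrak{C}_{\pi_0}^\epsilon\log X$ is \cref{lem:mertens}, and $\sum_{\N\kn\le X}\lambda_{\pi_0\times\tilde\pi_0}(\kn)\le X\sum_{\N\kn\le X}\N\kn^{-1}\lambda_{\pi_0\times\tilde\pi_0}(\kn)$ since the coefficients are nonnegative. When $\sigma=1+\tfrac1{\log Y}$: partial summation from \cref{lem:mertens}, together with the hypothesis $\log Y\asymp_\epsilon\log X$ (so that $X^{\pm2/\log Y}\asymp_\epsilon 1$ and $\tfrac1{\log Y}\asymp_\epsilon\tfrac1{\log X}$), gives $\sum_{\N\kn>X}\N\kn^{-1-2/\log Y}\lambda_{\pi_0\times\tilde\pi_0}(\kn)\ll_\epsilon\mathfrak{C}_{\pi_0}^\epsilon\log X$ and $\sum_{\N\kn>X}\N\kn^{-2-2/\log Y}\lambda_{\pi_0\times\tilde\pi_0}(\kn)\ll_\epsilon\mathfrak{C}_{\pi_0}^\epsilon X^{-1}\log X$. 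Substituting these and using the hypothesis on $X$ — which forces $T\cdot D_F^{-n^2/2}Q^nT^{[F:\Q]n^2/2+\epsilon}|\mathcal{S}|\le X$, so the $|\mathcal{S}|$-term is dominated by the first term — yields the four claimed bounds up to the harmless factors $\mathfrak{C}_{\pi_0}^\epsilon Q^\epsilon T^\epsilon$. I expect the only genuinely delicate point to be the behaviour of the tails $\sum_{\N\kn>X}$ at the near-central line $\Re(s)=1+\tfrac1{\log Y}$ — both their convergence and their size — which is precisely where the balancing hypothesis $\log Y\asymp\log X$ must be used; the rest is bookkeeping once the Plancherel step is aligned with the window in \cref{thm:large-sieve-general}.
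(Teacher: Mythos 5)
Your proposal is correct and follows essentially the same route as the paper, which simply invokes the argument of \cite[Corollary 4.4]{humphries2024zeros} (Gallagher's lemma/Plancherel to pass to windows $(x,e^{1/T}x]$, then \cref{thm:large-sieve-general} with $\lambda^{\circ}=\mu$ via \cref{prop:inequality-RS-covers}(i), then \cref{lem:mertens}); you have merely written out the details the paper leaves to the citation. The only quibble is your appeal to a ``zero-free region'' for convergence of the tail at $\Re(s)=1+\tfrac{1}{\log Y}>1$: absolute convergence there follows directly from \eqref{eqn:mu_bound} and Cauchy--Schwarz, and your fallback of finite truncations is also fine.
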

\begin{proof}
The proof is the same as that of \cite[Corollary 4.4]{humphries2024zeros}, with \cref{thm:large-sieve-general} in the role of \cite[Theorem 4.1]{humphries2024zeros}.  For the first two estimates, we apply \cref{thm:large-sieve-general} with $\pi_0=\mathbbm{1}$ and $\lambda_{\pi}^{\circ}(\kn)=\mu_{\pi}(\kn)$.  For the second two estimates, we apply \cref{thm:large-sieve-general} with $\pi_0$ arbitrary and $\lambda_{\pi\times\pi_0}^{\circ}(\kn)=\mu_{\pi\times\pi_0}(\kn)$.
\end{proof}

\begin{proof}[Proof of \cref{thm:ZDE}]
We prove the second zero density estimate in \cref{thm:ZDE}, since the first is a special case ($\pi_0=\mathbbm{1}$).  We summarize as follows.  Using the ideas of Montgomery \cite{Monty}, one proceeds as in the proof of \cite[Theorem 1.1]{humphries2024zeros} (without the coprimality conditions), concluding that if
\[
X=D_F^{-\frac{n^2}{2}}Q^n T^{\frac{n^2[F:\Q]}{2}+1}|\mathcal{S}|(\mathfrak{C}_{\pi_0}QT)^{\epsilon},\quad Y=\Big(D_F^{-\frac{n_0 n}{2}}\mathfrak{C}_{\pi_0}^{\frac{n}{2}}Q^{\frac{n_0}{2}}T^{\frac{n_0 n [F:\Q]}{2}+1}|\mathcal{S}|X(\mathfrak{C}_{\pi_0}QT)^{\epsilon}\Big)^{\frac{1}{3-2\sigma}},
\]
then 
\begin{multline*}
\sum_{\pi\in\mathcal{S}}N_{\pi\times\pi_0}(\sigma,T)\ll_{\epsilon} X^{\epsilon} \Big[Y^{2(1-\sigma)}\int_{-T}^{T}\sum_{\pi\in\mathcal{S}}\Bigg|\sum_{\N\kn>X}\frac{\mu_{\pi\times\pi_0}(\kn)}{\N\kn^{1+\frac{1}{\log Y}+iv}}\Bigg|^2 dv\\
+ Y^{\frac{1}{2}-\sigma}\Bigg(\int_{-T}^{T}\sum_{\pi\in\mathcal{S}}|L(\tfrac{1}{2}+iv,\pi\times\pi_0)|^2 dv\Bigg)^{\frac{1}{2}}\Bigg(\int_{-T}^{T}\sum_{\pi\in\mathcal{S}}\Bigg|\sum_{\N\kn\leq X}\frac{\mu_{\pi\times\pi_0}(\kn)}{\N\kn^{\frac{1}{2}+iv}}\Bigg|^2 dv\Bigg)^{\frac{1}{2}}+ Y^{1-\sigma}\Big].
\end{multline*}
It should be noted that one of the intermediate steps in the proof of \cite[Theorem 1.1]{humphries2024zeros} uses the bound in \cite[Lemma 3.3]{humphries2024zeros}:  If $\gcd(\kn,\kq_{\pi}\kq_{\pi_0})=\mathcal{O}_F$, then $2|\mu_{\pi\times\pi_0}(\kn)|\leq \lambda_{\pi\times\tilde{\pi}}(\kn)+\lambda_{\pi_0\times\tilde{\pi}_0}(\kn)$.  Fortunately, per \eqref{eqn:mu_bound}, this bound holds independently of the coprimality condition.

We estimate the two $v$-integrals involving $\mu_{\pi\times\pi_0}(\kn)$ using \cref{cor:MVT_mu}.  The remaining integral is estimated trivially using \cref{lem:Li1}.  These estimates readily yield \cref{thm:ZDE}.
\end{proof}

\section{Proofs of Corollaries \ref{cor:large_sieve2} and \ref{thm:2ndMoment}}
\label{sec:holder}

We denote by $\mJ_F$ the set of nonzero integral ideals of $F$. To avoid confusion with $k$-tuples $(\kn_1, \ldots, \kn_k) \in \mJ_F^k$, we write $\gcd(\kn_1, \ldots, \kn_k)$ and $\lcm(\kn_1, \ldots, \kn_k)$ for the greatest common divisor and lowest common multiple of these ideals, respectively. We begin with the following lemma.

\begin{lemma}
\label{lem:decompose-ideals}
    For each positive integer $k$, there exists a map $ \mathscr{M}\colon \mJ_F^k \to \mJ_F^k$ such that for all $(\kn_1, \ldots, \kn_k) \in \mJ_F^k$, the tuple $(\km_1, \ldots, \km_k) := \mathscr{M}(\kn_1, \ldots, \kn_k)$ satisfies the following:
    \begin{itemize}
        \item[$(i)$] For all $1 \le j \le k$, one has $\km_j \mid \kn_j$ and $\gcd(\km_j, \km_j^{-1} \kn_j) = \mO_F$;
        \item[$(ii)$] $\km_1 \cdots \km_k = \lcm(\kn_1, \ldots, \kn_k)$, and the ideals $(\km_j)_{1 \le j \le k}$ are pairwise relatively prime.
    \end{itemize}
\end{lemma}

\begin{proof}
Let $(\kn_1, \ldots, \kn_k) \in \mJ_F^k$.  There exists an integer $L\geq 1$ and positive integers $a_1, \ldots, a_L$ such that $\kp_1^{a_1} \cdots \kp_L^{a_L}$ is the factorization of $\lcm(\kn_1, \ldots, \kn_k)$.  We construct the tuple $(\km_1, \ldots, \km_k) = \mathscr{M}(\kn_1, \ldots, \kn_k)$ by a greedy process: 
\begin{itemize} 
\item[1.] Initially, set $\km_1 = \cdots = \km_k := \mO_F$. 
\item[2.] At step $\ell \in \{1, \ldots, L\}$, append $\kp_\ell^{a_\ell}$ to some arbitrarily-chosen $\km_j$ (i.e., set $\km_j \mapsto \km_j \kp_\ell^{a_\ell}$) such that $\kp_\ell^{a_\ell} \mid \kn_j$. Such a $j$ must exist since $\kp_\ell^{a_\ell} \mid \lcm(\kn_1, \ldots, \kn_k)$.
\end{itemize}
Throughout the process, for all $1 \le j \le k$, we have $\km_j \mid \kn_j$. In fact, $\km_j$ is a product of maximal prime powers dividing $\kn_j$ (i.e., $\kp^a \| \kn_j$), so $(\km_j, \km_j^{-1}\kn_j) = \mO_F$.  At the end of the process, we also have $\km_1 \cdots \km_k = \kp_1^{a_1}\cdots \kp_L^{a_L} = \lcm(\kn_1, \ldots, \kn_k)$. Moreover, each prime power $\kp_\ell^{a_\ell}$ appears as a factor of exactly one of $\km_1, \ldots, \km_k$, so the ideals $\km_1,\ldots,\km_k$ are pairwise relatively prime.
\end{proof}


\begin{proof}[Proof of \cref{cor:large_sieve2}]
Let $\epsilon>0$.  We prove the result for $C^\infty(N, \mS, \pi_0)$; the bound for $C^\infty(N, \mS)$ then follows by taking $\pi_0 = \one$. We need to show that if $a(\kn)$ is a function satisfying $N^{1/2} \|a\|_{\infty,\pi_0} = 1$, then
\[
    \sum_{\pi\in\mathcal{S}}\Bigg|\sum_{\N\kn\leq N}\lambda_{\pi\times\pi_0}(\kn)a(\kn)\Bigg|^2
    \ll_{k,\epsilon} (NQ)^{\epsilon} (N|\mS|^{\frac{k-1}{k}} + Q^{\frac{n}{k}}|\mS|).
\]
If $\lambda_{\pi_0\times\tilde{\pi}_0}(\kn)=0$, then by \eqref{eqn:pointwise-RS-bound}, we have that $\lambda_{\pi\times\pi_0}(\kn)=0$.  Therefore, we can assume without loss of generality that $a(\kn)$ is supported on ideals $\kn$ with $\lambda_{\pi_0 \times \tilde\pi_0}(\kn) \neq 0$ and $\N \kn \le N$. For $\pi \in \mS$, we denote
\begin{equation} \label{eqn:twist-pi0}
    \lambda^{\pi_0}_\pi(\kn) := 
    \begin{cases}
        0, &\lambda_{\pi_0 \times \tilde\pi_0}(\kn) = 0, \\ 
        \lambda_{\pi_0 \times \tilde\pi_0}(\kn)^{-1/2} \lambda_{\pi \times \pi_0}(\kn), & 
        \text{otherwise.}
    \end{cases}
\end{equation}
Note that this is a multiplicative function of $\kn$.  By changing variables $\lambda_{\pi_0 \times \tilde\pi_0}(\kn)^{1/2} a(\kn) \mapsto a(\kn)$, it remains to show that
\begin{equation} \label{eqn:holder-cor-to-show}
    \sum_{\pi\in\mathcal{S}}\Bigg|\sum_\kn \lambda^{\pi_0}_\pi(\kn)a(\kn)\Bigg|^2 \ll_{k,\epsilon} (NQ)^{\epsilon} (N|\mS|^{\frac{k-1}{k}} + Q^{\frac{n}{k}}|\mS|),
\end{equation}
whenever $a(\kn)$ is supported on $\N \kn \le N$ and $\max_\kn |a(\kn)| = N^{-1/2}$. By H\"older's inequality, we have
\begin{equation} \label{eqn:holder-step}
\begin{aligned}
    \sum_{\pi\in\mathcal{S}} 1 \cdot \Bigg|\sum_{\kn}\lambda^{\pi_0}_{\pi}(\kn)a(\kn)\Bigg|^2
    &\le 
    |\mS|^{\frac{k-1}{k}}
    \left( \sum_{\pi\in\mathcal{S}} \Bigg|\sum_{\kn}\lambda^{\pi_0}_{\pi}(\kn)a(\kn)\Bigg|^{2k} \right)^{\frac{1}{k}}
    \\
    &= 
    |\mS|^{\frac{k-1}{k}}
    \left( \sum_{\pi\in\mathcal{S}} \Bigg|\sum_{\kn_1, \ldots, \kn_k}\lambda^{\pi_0}_{\pi}(\kn_1) \cdots \lambda^{\pi_0}_{\pi}(\kn_k) a(\kn_1) \cdots a(\kn_k)\Bigg|^2 \right)^{\frac{1}{k}}.
\end{aligned}
\end{equation}

Let $\mathscr{M}$ be as in \cref{lem:decompose-ideals}.  Given $(\kn_1,\ldots,\kn_k)\in \mJ_F^k$, write $(\km_1,\ldots,\km_k)=\mathscr{M}(\kn_1,\ldots,\kn_k)$ and $(\kd_1,\ldots,\kd_k)=(\km_1^{-1}\kn_1,\ldots,\km_k^{-1}\kn_k)$.  We now have
\begin{equation}
\label{eqn:expanded_after_M}
\begin{aligned}
   & \sum_{\kn_1, \ldots, \kn_k} \lambda^{\pi_0}_{\pi}(\kn_1) \cdots \lambda^{\pi_0}_{\pi}(\kn_k) a(\kn_1) \cdots a(\kn_k)\\
&= \sum_{\substack{\kd_1, \ldots, \kd_k \\ \km_1, \ldots, \km_k \\  \mathscr{M}(\kd_1\km_1, \ldots, \kd_k\km_k) = (\km_1, \ldots, \km_k)}}
    \lambda^{\pi_0}_{\pi}(\kd_1\km_1) \cdots \lambda^{\pi_0}_{\pi}(\kd_k\km_k) a(\kd_1\km_1) \cdots a(\kd_k\km_k).
\end{aligned}
\end{equation}
Indeed, each tuple $(\kn_1, \ldots, \kn_k) \in \mJ_F^k$ appears exactly once as $(\kd_1\km_1, \ldots, \kd_k\km_k)$ in the final sum, since $\km_1, \ldots, \km_k$ are uniquely determined by $\mathscr{M}$. By \cref{lem:decompose-ideals} and the multiplicativity of $\lambda^{\pi_0}_\pi$, we have that
\[
\lambda^{\pi_0}_{\pi}(\kd_1\km_1) \cdots \lambda^{\pi_0}_{\pi}(\kd_k\km_k) = \lambda^{\pi_0}_\pi(\kd_1) \cdots \lambda^{\pi_0}_\pi(\kd_k) \lambda^{\pi_0}_\pi(\km_1\cdots\km_k).
\]
If we define
\begin{equation} \label{eqn:ak-def}
    a_{\kd_1,\ldots,\kd_k}(\kc) := \sum_{\substack{\km_1 \cdots \km_k = \kc \\ \mathscr{M}(\kd_1\km_1, \ldots, \kd_k\km_k) = (\km_1, \ldots, \km_k)}} a(\kd_1\km_1) \cdots a(\kd_k\km_k),
\end{equation}
then \eqref{eqn:expanded_after_M} equals
\[
\sum_{\kd_1, \ldots, \kd_k}
    \lambda^{\pi_0}_\pi(\kd_1) \cdots \lambda^{\pi_0}_\pi(\kd_k)
    \sum_{\N\kc \le N^k}
    \lambda^{\pi_0}_\pi(\kc) 
    a_{\kd_1,\ldots,\kd_k}(\kc).
\]
Taking a square and applying Cauchy--Schwarz in the sum over $\kd_1, \ldots, \kd_k$, we deduce that
\begin{equation} \label{eqn:squared-holder-sum}
\begin{aligned}
    &\Big\vert \sum_{\kn_1, \ldots, \kn_k} \lambda^{\pi_0}_{\pi}(\kn_1) \cdots \lambda^{\pi_0}_{\pi}(\kn_k) a(\kn_1) \cdots a(\kn_k) \Big\vert^2
    \\
    =\
    &\Big\vert \sum_{\kd_1, \ldots, \kd_k}
    \lambda^{\pi_0}_\pi(\kd_1) \cdots \lambda^{\pi_0}_\pi(\kd_k)
    \sum_{\N\kc \le N^k}
    \lambda^{\pi_0}_\pi(\kc) 
    a_{\kd_1,\ldots,\kd_k}(\kc) \Big\vert^2 
    \\
    \le\ 
    &\left(\sum_{\N \kd_1, \ldots, \N \kd_k \le N}
    \frac{|\lambda^{\pi_0}_\pi(\kd_1) \cdots \lambda^{\pi_0}_\pi(\kd_k)|^2}{\N\kd_1 \cdots \N\kd_k}
    \right)
    \left( 
    \sum_{\kd_1, \ldots, \kd_k} \N\kd_1 \cdots \N\kd_k
    \Big\vert 
    \sum_{\N\kc \le N^k}
    \lambda^{\pi_0}_\pi(\kc) 
    a_{\kd_1,\ldots,\kd_k}(\kc) \Big\vert^2 \right).
\end{aligned}
\end{equation}

Let us analyze the first sum.
By \eqref{eqn:pointwise-RS-bound} (with $\lambda^{\circ}=\lambda^+=\lambda$, $\pi_1=\pi$, and $\pi_2=\pi_0$), we have $|\lambda_\pi^{\pi_0}(\kd)|^2 \le \lambda_{\pi \times \tilde\pi}(\kd)$. Using this and \cref{lem:mertens}, we obtain 
\[
    \sum_{\N \kd_1, \ldots, \N \kd_k \le N}
    \frac{|\lambda^{\pi_0}_\pi(\kd_1) \cdots \lambda^{\pi_0}_\pi(\kd_k)|^2}{\N\kd_1 \cdots \N\kd_k}
    =
    \Bigg( \sum_{\N\kd \le N} \frac{|\lambda_\pi^{\pi_0}(\kd)|^2}{\N\kd} \Bigg)^k
    \le
    \Bigg( \sum_{\N\kd \le N} \frac{\lambda_{\pi \times \tilde\pi}(\kd)}{\N\kd} \Bigg)^k
    \ll_{k,\epsilon} (NQ)^{\epsilon}.
\]
Combining this with \eqref{eqn:holder-step} and \eqref{eqn:squared-holder-sum}, we reach
\[
\sum_{\pi \in \mS} \Big\vert \sum_\kn \lambda_\pi^{\pi_0}(\kn) a(\kn) \Big\vert^2
    \ll_{k,\epsilon} (NQ)^{\epsilon}
    |\mS|^{\frac{k-1}{k}}
    \Bigg(
    \sum_{\pi \in \mS}
    \sum_{\kd_1, \ldots, \kd_k} \N\kd_1 \cdots \N\kd_k
    \Big\vert 
    \sum_{\N\kc \le N^k}
    \lambda^{\pi_0}_\pi(\kc) 
    a_{\kd_1,\ldots,\kd_k}(\kc) \Big\vert^2
    \Bigg)^{\frac{1}{k}}.
\]
Recalling \eqref{eqn:twist-pi0} and then applying \cref{thm:large_sieve2} for the sequence $a_{\kd_1,\ldots,\kd_k}(\kc)$, we conclude that
\[
\begin{aligned}
    &\sum_{\pi \in \mS} \Big\vert \sum_\kn \lambda_\pi^{\pi_0}(\kn) a(\kn) \Big\vert^2\\
    &\ll_{k,\epsilon} (NQ)^{\epsilon} |\mS|^{\frac{k-1}{k}} \Bigg(\sum_{\kd_1,\ldots,\kd_k} \N\kd_1 \cdots \N\kd_k \sum_{\pi \in \mS} \Big\vert \sum_{\substack{\N\kc \le N^k \\ \lambda_{\pi_0 \times \tilde\pi_0}(\kc) \neq 0}} \lambda_{\pi \times \pi_0}(\kc) \frac{a_{\kd_1,\ldots,\kd_k}(\kc)}{\sqrt{\lambda_{\pi_0 \times \tilde\pi_0}(\kc)}} \Big\vert^2
    \Bigg)^{\frac{1}{k}}
    \\
    &\ll_{k,\epsilon} (NQ)^{\epsilon} |\mS|^{\frac{k-1}{k}} \Bigg(\sum_{\kd_1, \ldots, \kd_k} \N\kd_1 \cdots \N\kd_k (NQ)^{\epsilon} (N^k + Q^n|\mS|) \sum_{\substack{\N\kc\leq N^k \\ \lambda_{\pi_0\times\tilde{\pi}_0}(\kc)\neq 0}}\lambda_{\pi_0\times\tilde{\pi}_0}(\kc)\Bigg|\frac{a_{\kd_1,\ldots,\kd_k}(\kc)}{\sqrt{\lambda_{\pi_0 \times \tilde\pi_0}(\kc)}}\Bigg|^2 \Bigg)^{\frac{1}{k}}
    \\
    &\ll_{k,\epsilon} (NQ)^{\epsilon}
    (N|\mS|^{\frac{k-1}{k}} + Q^{\frac{n}{k}}|\mS|)
    \Bigg(
    \sum_{\kd_1, \ldots, \kd_k} \N\kd_1 \cdots \N\kd_k
    \|a_{\kd_1,\ldots,\kd_k}\|_2^2
    \Bigg)^{\frac{1}{k}}.
\end{aligned}
\]
To establish \eqref{eqn:holder-cor-to-show}, it remains to bound the sum over $\kd_1, \ldots, \kd_k$ by $N^{\epsilon}$. From \eqref{eqn:ak-def}, Cauchy--Schwarz, the divisor bound $\sum_{\km_1\cdots \km_k=\kc} 1 \ll_{k,\epsilon} \N\kc^{\epsilon}$ (see \cite{Weiss}), we have
\[
\begin{aligned}
    \|a_{\kd_1,\ldots,\kd_k}\|_2^2
    &=
    \sum_{\N \kc \le N^k} \Bigg\vert \sum_{\substack{\km_1 \cdots \km_k = \kc \\ \mathscr{M}(\kd_1\km_1, \ldots, \kd_k\km_k) = (\km_1, \ldots, \km_k)}} a(\kd_1\km_1) \cdots a(\kd_k\km_k) \Bigg\vert^2
    \\
    &\ll_{k,\epsilon} (NQ)^{\epsilon}
    \sum_{\N \kc \le N^k} \sum_{\substack{\km_1 \cdots \km_k = \kc \\ \mathscr{M}(\kd_1\km_1, \ldots, \kd_k\km_k) = (\km_1, \ldots, \km_k)}} |a(\kd_1\km_1) \cdots a(\kd_k\km_k)|^2.
\end{aligned}
\]
Summing over $\kd_1, \ldots, \kd_k$ in a choice of dyadic ranges of sizes $D_1, \ldots, D_k \le N$, and using that $a(\kn)$ is supported on $\N\kn \le N$ and uniformly bounded by $N^{-1/2}$, we deduce that
\[
\begin{aligned}
    \sum_{\substack{D_1 < \N\kd_1 \le 2D_1 \\ \vdots \\ D_k < \N \kd_k \le 2D_k}} \N\kd_1 \cdots \N\kd_k \|a_{\kd_1,\ldots,\kd_k}\|_2^2
    &\ll_{k,\epsilon} 
    \frac{D_1\cdots D_k}{N^{k-\epsilon}}
    \sum_{\N\m_1 \le \frac{N}{D_1}, \ldots, \N\m_k \le \frac{N}{D_k}} \sum_{\substack{D_1 < \N\kd_1 \le 2D_1 \\ \vdots \\ D_k < \N \kd_k \le 2D_k \\ \mathscr{M}(\kd_1\km_1, \ldots, \kd_k\km_k) = (\km_1, \ldots, \km_k)}} 1.
\end{aligned}
\]
We recall from \cref{lem:decompose-ideals} that if $\mathscr{M}(\kd_1\km_1, \ldots, \kd_k\km_k) = (\km_1, \ldots, \km_k)$, then $\lcm(\kd_1\km_1, \ldots, \kd_k\km_k) = \km_1 \cdots \km_k$. This implies that $\kd_j \mid \km_1 \cdots \km_k$ for all $1 \le j \le k$. Using the divisor bound once again, as well as the bound $\sum_{\N\km \le M} 1 \ll_{\epsilon} M^{1+\epsilon}$ (see \cite{Weiss}), we reach
\[
    \sum_{\substack{D_1 < \N\kd_1 \le 2D_1 \\ \vdots \\ D_k < \N \kd_k \le 2D_k}} \N\kd_1 \cdots \N\kd_k \|a_{\kd_1,\ldots,\kd_k}\|_2^2
    \ll_{k,\epsilon} \frac{D_1\cdots D_k}{N^{k-\epsilon}}
    \sum_{\N\m_1 \le \frac{N}{D_1}, \ldots, \N\m_k \le \frac{N}{D_k}} 1
    \ll_{k,\epsilon} N^{\epsilon}.
\]
Summing over dyadic ranges, this establishes the desired bound \eqref{eqn:holder-cor-to-show}.
\end{proof}

\begin{proof}[Sketch of proof of \cref{thm:2ndMoment}]
Let $\mathcal{S}\subseteq\mathfrak{F}_n$ be a finite set, $Q=\max_{\pi\in\mathcal{S}}\mathfrak{C}_{\pi}$, and $t\in\R$.  Write $\mathcal{Q}=Q(|t|+3)^{n[F:\Q]}$.  It follows from the approximate functional equation \cite{HarcosAFE1,HarcosAFE2} that for all $\epsilon>0$ and $\pi \in \F_n$, there is an effectively computable constant $\Cl[abcon]{AFE}=\Cr{AFE}(n,[F:\Q],\epsilon)>0$ such that
\[
|L(\tfrac{1}{2}+it,\pi)|^2 \ll_{\epsilon}\mathcal{Q}^{\epsilon}\int_{-\Cr{AFE}\mathcal{Q}^{\epsilon}}^{\Cr{AFE}\mathcal{Q}^{\epsilon}}\Bigg|\sum_{\N\kn\leq \Cr{AFE} \mathcal{Q}^{1/2+\epsilon}}\frac{\lambda_{\pi}(\kn)}{\N\kn^{1/2+\epsilon+it+i\tau}}\Bigg|^2 d\tau+\mathcal{Q}^{-1}.
\]
See \cite[Lemma 3.4]{jana2020applications} for details when $t=0$; a very similar approach also handles $t\neq 0$. Thus
\[
\sum_{\pi\in\mathcal{S}}|L(\tfrac{1}{2}+it,\pi)|^2\ll_{\epsilon} \mathcal{Q}^{3\epsilon}|\mathcal{S}|\sup_{\substack{R\leq \Cr{AFE}\mathcal{Q}^{1/2+\epsilon} \\ \tau\in[-\Cr{AFE}\mathcal{Q}^{\epsilon},\Cr{AFE}\mathcal{Q}^{\epsilon}]}}\frac{1}{|\mathcal{S}|}\sum_{\pi\in\mathcal{S}}\Bigg|\sum_{R<\N\kn\leq 2R}\frac{\lambda_{\pi}(\kn)}{\N\kn^{1/2+\epsilon+it+i\tau}}\Bigg|^2+\mathcal{Q}^{-1}.
\]
Let $\delta=\delta_{\mathcal{S}}\geq 0$ be such that $|\mathcal{S}|\gg_{F}Q^{\delta}$.  We apply \cref{cor:large_sieve2} in the form 
\[
    \sum_{\pi \in \mS} \left\vert \sum_{N < \N\kn \le 2N} \lambda_\pi(\kn) a(\kn) \right\vert^2 \ll_{k,\epsilon} (NQ)^{\epsilon} (N|\mS|^{\frac{k-1}{k}} + Q^{\frac{n}{k}} |\mS|)\, N\max_{N<\N\kn \le 2N} |a(\kn)|^2,
\]
with $N=R\leq \mathcal{Q}^{1/2+\epsilon}$, $a(\kn)=\N\kn^{-1/2-\epsilon-it-i\tau}$, and $k=\lceil 2(n+\delta)\rceil$, thus obtaining
\[
\sum_{\pi\in\mathcal{S}}|L(\tfrac{1}{2}+it,\pi)|^2 \ll_{k,\epsilon} 
\mathcal{Q}^{6\epsilon}|\mathcal{S}|(\mathcal{Q}^{\frac{1}{2}}|\mathcal{S}|^{-\frac{1}{k}}+Q^{\frac{n}{k}})\ll_F |\mathcal{S}| Q^{\frac{1}{2}-\frac{\delta}{k}+6\epsilon}(|t|+3)^{\frac{n[F:\Q]}{2}},
\]
as desired.
\end{proof}

\section{Proof of \cref{thm:LFZDE}}
\label{sec:LFZDE}

\subsection{Bounds for the logarithmic derivative}
The key new ingredient in our proof of \cref{thm:LFZDE} is the use of \eqref{eqn:large-sieve-general-sifted} to deduce a large-sieve-type bound for the Dirichlet coefficients $\Lambda_{\pi\times\pi_0}(\kn)$ as $\kn$ ranges over all ideals of $\mathcal{O}_F$ (instead of restricting to prime ideals).  This crucially uses \cref{prop:inequality-RS-covers}, which enables us to prove a large sieve inequality for the Dirichlet coefficients of $\log L(s,\pi\times\pi_0)$ as $\pi\in\mathcal{S}$ varies.  Our use of Selberg's sieve in \eqref{eqn:large-sieve-general-sifted} saves us a crucial logarithm.

\begin{corollary}
\label{cor:ls-log-derivative}
Recall the notation and hypotheses of  \cref{thm:large-sieve-general}.  Define $\tilde n = \max(n, n_0)$, $\tilde Q = \max(Q, \mathfrak{C}_{\pi_0})$, and $\|a\|_\infty = \max_{\N \n \in (x, e^{1/T} x]} |a(\n)|$.  There exists an effectively computable constant $\Cl[abcon]{BT_average}=\Cr{BT_average}(n,n_0,[F:\Q],\epsilon)\geq 1$ such that if $Q\geq D_F^{\Cr{family}}$ and $x \geq \Cr{BT_average} (\tilde{Q}T^{[F:\Q]})^{2\tilde{n}^3 + 6\tilde{n}^2}$, then
\[
\begin{aligned}
    \sum_{\pi \in \mS}
    \Bigg| \sum_{\N \n \in (x, e^{1/T} x]} a(\n) \Lambda_{\pi \times \pi_0}(\n) \Bigg|^2
    \ll 
    \frac{x^2}{T^2} \|a\|_\infty^2.
\end{aligned}
\]
\end{corollary}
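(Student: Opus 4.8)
The plan is to write $\Lambda_{\pi\times\pi_0}(\n)=(\log\N\n)\,\lambda^\circ_{\pi\times\pi_0}(\n)$, where $\lambda^\circ_{\pi\times\pi_0}(\n)$ is the $\n$-th Dirichlet coefficient of $\log L(s,\pi\times\pi_0)$, and then to apply \eqref{eqn:large-sieve-general-sifted} to the coefficients $\lambda^\circ_{\pi\times\pi_0}$ (permissible by \cref{prop:inequality-RS-covers}(i), since $L(s,\pi\times\tilde\pi')$ is a positive semi-definite cover for $\log L(s,\pi\times\tilde\pi')$), with the sieve level $z=x^{c}$ for a constant $c=c(\tilde n)>0$ slightly below $\tfrac{1}{2\tilde n^2}$. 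Since $\Lambda_{\pi\times\pi_0}$ is supported on prime powers, splitting the $\n$-sum by whether all prime factors of $\n$ exceed $z$ is the same as splitting off high prime powers: an $\n=\p^k\in(x,e^{1/T}x]$ with $\N\p\le z$ has $k>1/c$, and there are $\ll x^{c+o(1)}$ such prime powers in the window. For these I would use the trivial bound $|\Lambda_{\pi\times\pi_0}(\p^k)|\ll(\N\p^k)^{\theta_n+\theta_{n_0}}\log\N\p\ll x^{\theta_n+\theta_{n_0}}\log x$ from \eqref{eqn:LRS_2} together with $|\mS|\le|\mathfrak{F}_n(Q)|\ll D_F^{-n^2}Q^{2n+o(1)}$ from \cref{thm:family} (valid because $Q\ge D_F^{\Cr{family}}$): this ``unsifted'' part contributes $\ll Q^{2n+o(1)}\|a\|_\infty^2\,x^{2c+2(\theta_n+\theta_{n_0})+o(1)}$ to the left-hand side, and since $\theta_n+\theta_{n_0}<1-\tfrac{2}{\tilde n^2+1}$ while $c<\tfrac{2}{\tilde n^2+1}$, the exponent of $x$ is $2-\delta$ for some $\delta=\delta(\tilde n)>0$, so this is $\ll x^2T^{-2}\|a\|_\infty^2$ under the hypothesized size of $x$.

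For the ``sifted'' part I would invoke \eqref{eqn:large-sieve-general-sifted} with the above $\lambda^\circ$ and with the weight $a(\n)\log\N\n$ in place of $a(\n)$; its left-hand side then equals $\sum_{\pi\in\mS}\bigl|\sum_{\n\text{ sifted}}a(\n)\Lambda_{\pi\times\pi_0}(\n)\bigr|^2$, and its right-hand side is $\ll\tfrac{1}{\log z}\bigl(\tfrac xT+E\bigr)\sum_{\n\text{ sifted}}|a(\n)|^2(\log\N\n)^2\lambda_{\pi_0\times\tilde\pi_0}(\n)$, where $E=D_F^{-n^2/2}Q^{n+\eps}z^{2n^2+2+\eps}T^{[F:\Q]n^2/2+\eps}|\mS|$. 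On the window $\log\N\n\le\log x+1$, so the inner sum is $\le(\log x+1)^2\|a\|_\infty^2\sum_{\n\text{ sifted}}\lambda_{\pi_0\times\tilde\pi_0}(\n)$, and \cref{cor:sifted-bound-single-rep} bounds this last sum by $\tfrac{x}{T\log z}+E_0$ with $E_0=D_F^{-n_0^2/2}\mathfrak{C}_{\pi_0}^{n_0+\eps}z^{2n_0^2+2+\eps}T^{[F:\Q]n_0^2/2+\eps}$. The essential feature of the choice $z=x^{c}$ is that $\log x/\log z=1/c=O_{\tilde n}(1)$, so the Selberg-sieve factor $1/\log z$ cancels, up to a constant depending only on $\tilde n$, the two powers of $\log\N\n\asymp\log x$ incurred in passing from $\log L$-coefficients to $\Lambda$-coefficients. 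This is precisely the logarithm that Selberg's sieve has to save: with $z$ only a fixed power of $\tilde Q$, an unrecoverable power of $\log x$ would survive.

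It remains to choose $c$ and dispatch the error terms. The lower bound $z=x^{c}\ge\Cr{large_sieve_z}\max\{D_F^{-n^2/2}Q^{n+\eps},D_F^{-n_0^2/2}\mathfrak{C}_{\pi_0}^{n_0+\eps}\}$ required by \cref{thm:large-sieve-general,cor:sifted-bound-single-rep} amounts to $x\gg(\tilde QT^{[F:\Q]})^{\tilde n/c+o(1)}$, and $c$ should be taken so that $\tilde n/c\le 2\tilde n^3+6\tilde n^2$—e.g.\ a value of $c$ slightly larger than $\tfrac{1}{2\tilde n^2+6\tilde n}$, together with a sufficiently small $\eps$ in \cref{thm:large-sieve-general,cor:sifted-bound-single-rep}, works; this is why the hypothesis is stated as $x\ge\Cr{BT_average}(\tilde QT^{[F:\Q]})^{2\tilde n^3+6\tilde n^2}$. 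For such $c$ one has $c(2n^2+2)<1$ with $1-c(2n^2+2)\gg\tilde n^{-1}$, so $z^{2n^2+2}=x^{1-\delta'}$ with $\delta'=\delta'(\tilde n)\gg\tilde n^{-1}$; combining this with $|\mS|\ll Q^{2n+o(1)}$ and $\mathfrak{C}_{\pi_0}\le\tilde Q$ shows that $E\ll x/T$ and $E_0\ll x/(T\log z)$ both follow from the hypothesis (the constraints that arise involve powers of $\tilde QT^{[F:\Q]}$ of degree $O(\tilde n^3)$, well within $2\tilde n^3+6\tilde n^2$). The sifted part is therefore $\ll(\log z)\cdot\tfrac xT\cdot\|a\|_\infty^2\cdot\tfrac{x}{T\log z}\ll x^2T^{-2}\|a\|_\infty^2$, and adding the unsifted part finishes the proof.

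I expect the main obstacle to be exactly this three-way balance: $z$ must be a genuine power $x^{c(\tilde n)}$ of $x$ (to conserve the logarithm), small enough that the $z^{2n^2+2}$-type errors and the count of high prime powers stay negligible, and large enough to clear the thresholds of \cref{thm:large-sieve-general,cor:sifted-bound-single-rep}; the hypothesis $x\gtrsim(\tilde QT^{[F:\Q]})^{2\tilde n^3+6\tilde n^2}$ is what makes one value of $c$ do all three. It is worth noting that this whole argument is independent of progress toward GRC: the pointwise inequality underlying \eqref{eqn:large-sieve-general-sifted} treats ramified and unramified primes on the same footing (this is the point of the cover framework of \cref{sec:non-def-covers}), so GRC enters only through the crude bound \eqref{eqn:LRS_2}, and only for the harmless high prime powers.
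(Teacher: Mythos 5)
Your proposal is correct and follows essentially the same route as the paper's proof: the same factorization $\Lambda_{\pi\times\pi_0}(\n)=(\log\N\n)\lambda^\circ_{\pi\times\pi_0}(\n)$, the same split at prime powers $\p^k$ with $k\le k_0\approx 2\tilde n^2+O(\tilde n)$ (equivalently, sieve level $z=x^{1/k_0}$), the same application of \eqref{eqn:large-sieve-general-sifted} with the weight $a(\n)\log\N\n$ together with \cref{cor:sifted-bound-single-rep}, and the same trivial treatment of high prime powers via \eqref{eqn:LRS_2} and \cref{thm:family}. Your accounting of the three-way constraint on $c$ matches the paper's choice $k_0=2\tilde n^2+5.5\tilde n$, so nothing further is needed.
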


\begin{proof} 
In this proof, we denote by $\lambda^\circ_{\pi \times \tilde\pi'}(\n)$ the Dirichlet coefficients of $\log L(s, \pi \times \tilde\pi')$, which have $\lambda_{\pi \times \tilde\pi'}(\n)$ as a positive semi-definite cover by \cref{prop:inequality-RS-covers}(i). Both $\lambda^\circ_{\pi \times \pi_0}$ and $\Lambda_{\pi \times \pi_0}$ are supported on prime powers $\n = \p^k$, and they are related by $\Lambda_{\pi \times \pi_0}(\n) = \lambda^\circ_{\pi \times \pi_0}(\n) \log \N \n$.  Let $k_0 > 0$ and $\eps > 0$ be parameters to be chosen shortly (where $k_0$ depends only on $n$ and $n_0$, and $\eps$ is absolute).  By the Cauchy--Schwarz inequality, it suffices to separately estimate the two sums
{\small\[
    \mathfrak{L}_{\text{small}} := \sum_{\pi \in \mS}
    \Bigg| \sum_{\substack{\N \p^k \in (x, e^{1/T} x] \\ k \le k_0}} a(\p^k) \lambda^\circ_{\pi \times \pi_0}(\p^k) k \log \N \p \Bigg|^2,
    \ \ \ 
    \mathfrak{L}_{\text{large}} := \sum_{\pi \in \mS}
    \Bigg| \sum_{\substack{\N \p^k \in (x, e^{1/T} x] \\ k > k_0}} a(\p^k) \lambda^\circ_{\pi \times \pi_0}(\p^k) k \log \N \p \Bigg|^2.
\]}%

We bound $\mathfrak{L}_{\text{small}}$ using \cref{thm:large-sieve-general}, with $z := x^{1/k_0}$.  Instead of using \cref{thm:large-sieve-general} with $a(\kn)$, we will use \cref{thm:large-sieve-general} with $b(\kn)=
    a(\kn)\log\N\kn$ when $\kn=\kp^k$ with $k\leq k_0$ and $b(\kn)=0$ otherwise.  Note that $\N \p^k \in (x, e^{1/T} x]$ and $k \le k_0$ force $\N \p > z$. Therefore, if
\begin{equation}
\label{eqn:required-lower-bound-1}
    x \geq \max\Big\{\Cr{large_sieve_z}(n,[F:\Q],\epsilon)D_F^{-\frac{n^2}{2}}Q^{n+\eps},\Cr{large_sieve_z}(n_0,[F:\Q],\epsilon)D_F^{-\frac{n^2}{2}}\mathfrak{C}_{\pi_0}^{n_0+\eps}\Big\}^{k_0},
\end{equation}
then \eqref{eqn:large-sieve-general-sifted} and \cref{cor:sifted-bound-single-rep} imply that $\mathfrak{L}_{\text{small}}$ equals

\begin{equation}
\label{eqn:pre-sifted}
\begin{aligned}
&\sum_{\pi \in \mS}
    \Bigg| \sum_{\substack{\N \n \in (x, e^{1/T} x] \\ \p \mid \n \Rightarrow \N \p > z}} b(\n) \lambda^\circ_{\pi \times \pi_0}(\n) \Bigg|^2
    \\
    &\ll_{\epsilon}
 \Bigg(\frac{x}{T \log x} + D_F^{-\frac{n^2}{2}}Q^{n+\eps} x^{\frac{2n^2+2+\eps}{k_0}} T^{\frac{[F:\Q]n^2}{2}+\epsilon} 
     |\mS|\Bigg) \sum_{\substack{\N \n \in (x, e^{1/T} x] \\ \p \mid \n \Rightarrow \N \p > z}} |b(\n)|^2 \lambda_{\pi_0 \times \tilde\pi_0}(\n)
    \\
    &\ll_{\epsilon} 
  \Big(\frac{x}{T} + D_F^{-\frac{n^2}{2}}Q^{n+\eps} x^{\frac{2n^2+2+2\eps}{k_0}} T^{\frac{[F:\Q]n^2}{2}+\epsilon} 
     |\mS|\Big) \Big(\frac{x}{T} + D_F^{-\frac{n^2}{2}}\mathfrak{C}_{\pi_0}^{n_0+\eps} x^{\frac{2n_0^2+2+2\eps}{k_0}} T^{\frac{[F:\Q]n_0^2}{2}+\epsilon}\Big) \|a\|_\infty^2,
\end{aligned}
\end{equation}
where we used $\|b\|_\infty \ll \|a\|_\infty \log x$. Since $|\mS| \le |\F_n(Q)| \ll_{\epsilon} D_F^{-n^2}Q^{2n+\epsilon}$ per \cref{cor:family}, the final line of \eqref{eqn:pre-sifted} is $\ll_{\epsilon} x^2 T^{-2} \|a\|_\infty^2$, provided that
\begin{equation}
\label{eqn:required-lower-bound-2}
    \begin{cases} 
    x^{1-\frac{2n^2+2+2\eps}{k_0}} \gg_{\eps} Q^{3n+2\eps} T^{\frac{[F:\Q]n^2}{2} + 1+\epsilon},
    \\
    x^{1-\frac{2n_0^2+2+2\eps}{k_0}} \gg_{\eps} \mathfrak{C}_{\pi_0}^{n_0+\eps} T^{\frac{[F:\Q]n_0^2}{2} + 1+\epsilon}.
    \end{cases}
\end{equation}
For $\mathfrak{L}_{\text{large}}$, if $\N\p^k \in (x, e^{1/T}x]$, then by \eqref{eqn:LRS_2}, the bound $\lambda^\circ_{\pi \times \pi_0}(\p^k) \ll x^{\theta_n+\theta_{n_0}}$ holds.  Thus, if
   \begin{equation} \label{eqn:required-lower-bound-3}
    x^{2(1-\theta_n-\theta_{n_0} - \frac{1+\eps}{k_0})} \gg_{\eps} |\mathcal{S}|,
\end{equation}
then
\[
\begin{aligned}
    \mathfrak{L}_{\text{large}} 
    &\ll (\log x)^2 \sum_{\pi \in \mS}
    \Bigg( \sum_{\substack{\N \p^k \in (x, e^{1/T} x] \\ k > k_0}} \lambda^\circ_{\pi \times \pi_0}(\p^k) \Bigg)^2 \|a\|_\infty^2 
    \\
    &\ll  x^{2(\theta_n+\theta_{n_0})} (\log x)^2 |\mS|
    \Bigg(\sum_{k_0 < k \le \frac{\log(ex)}{\log 2}}~\sum_{\N \p \in (x^{1/k}, e^{1/T} x^{1/k}]} 1 \Bigg)^2 \|a\|_\infty^2 
    \\
    &\ll  x^{2(\theta_n+\theta_{n_0} + \frac{1}{k_0})} (\log x)^4 T^{-2} \|a\|_\infty^2 |\mS|\ll x^2 T^{-2} \|a\|_\infty^2.
\end{aligned}
\]
A small calculation using \eqref{eqn:ramanujan_progress} shows that if $k_0 = 2\tilde{n}^2 + 5.5\tilde{n}$ and $\eps \leq 10^{-3}$, then the constraints in \eqref{eqn:required-lower-bound-1}, \eqref{eqn:required-lower-bound-2}, and \eqref{eqn:required-lower-bound-3} hold simultaneously when $x$ satisfies the hypotheses of the corollary.
\end{proof}

\subsection{Notation and conventions}
\label{subsec:notation_LFZDE}

In keeping with the convention in \cref{cor:ls-log-derivative}, we let
\[
\mathcal{S}\subseteq\mathfrak{F}_n,\quad\pi\in\mathcal{S},\quad Q=\max_{\pi\in\mathcal{S}}\mathfrak{C}_{\pi},\quad\pi_0\in\mathfrak{F}_{n_0},\quad\tilde{n}=\max\{n,n_0\},\quad\tilde{Q}=\max\{Q,\mathfrak{C}_{\pi_0}\},\quad T\geq 2.
\]
For the rest of the paper, we use the following additional notation:
\begin{enumerate}[label=(\roman*)]
	\item $\mathcal{L}=\log(\tilde{Q}^{8\tilde{n}^3}T^{4[F:\Q]\tilde{n}^3})$,
	\item $\tau\in\R$ with $|\tau|\leq T$,
	\item $\alpha = 7.257\,570\,591\ldots$ and $A=3.893\,444\,953\ldots$,
	\item $R=\sqrt{A^2+1}$ and $\frac{1}{R\mathcal{L}}\leq \eta\leq \frac{1}{n_0 n R}$,
	\item $s_0=1+\eta+i\tau$,
	\item $\delta_{\pi\times\pi_0}$ the indicator function of the trivial primitive character,
	\item $\mathbf{1}_{I}(t)$ the indicator function of a subinterval $I\subseteq\R$,
	\item $\mathcal{N}_{\eta}=8A\eta\mathcal{L}+\Cl[abcon]{Linnik_err}$, where $\Cr{Linnik_err}=\Cr{Linnik_err}(n,n_0,[F:\Q])>0$ is sufficiently large.
	\item $M_{\eta}=(\alpha-1)\mathcal{N}_{\eta}$, which is at least $146$
	\item $k\in[M_{\eta},M_{\eta}+\mathcal{N}_{\eta}-1]\cap\Z\subseteq [M_{\eta},\frac{\alpha}{\alpha-1}M_{\eta}]\cap\Z$, which implies that $k\geq 146$,
	\item $V=2(4e\alpha)^{1/(\alpha-1)}+0.38=4.399\,815\,114\ldots$,
	\item $A_0 = 1/(eV)=0.083\,612\,477\ldots$,
	\item $A_{1} >2$ satisfies $V^{-1}=A_{1}  e^{1-\frac{A_{1} (\alpha-1)}{2\alpha}}$, so $A_{1}  = 11.401\,638\,518\,0\ldots$,
	\item $N_{\eta} = \exp(A_{0} M_{\eta}/\eta)$	and $N_{\eta}^{*}  = \exp(A_{1}  M_{\eta}/\eta)$,
	\item $\xi=1+10^{-7}$.
\end{enumerate}
The numerical values of $\alpha$ and $A$ are chosen to
\begin{equation}
\label{eqn:minimization_problem}
\textup{minimize $(4e\alpha 2^{\alpha-1})^A$ subject to $\alpha > 1$, $A>1$, and $4e\alpha(2/\sqrt{A^2+1})^{\alpha-1}=1-10^{-8}$,}
\end{equation}
resulting in the bound $2(4e\alpha)^{1/(\alpha-1)}<4.019\,815\,114\ldots$.

\subsection{Preliminaries}

We record some results that are already in the literature (or minor modifications thereof).  Throughout, we denote by $\rho=\beta+i\gamma$ a nontrivial zero of $L(s,\pi\times\pi_0)$.

\begin{lemma}
\label{lem:log_deriv}
If $k\geq 1$ is an integer and $s\in\mathbb{C}$ is not a zero of $L(s,\pi\times\pi_0)$, then
\begin{equation}
\label{eqn:Hadamard_0}
\sum_{\rho}\re\Big(\frac{1}{s-\rho}\Big)=\re\Big(\frac{L'}{L}(s,\pi\times\pi_0)+\frac{L_{\infty}'}{L_{\infty}}(s,\pi\times\pi_0)+\frac{\delta_{\pi\times\pi_0}}{s-1}+\frac{\delta_{\pi\times\pi_0}}{s}\Big)
\end{equation}
and
{\small\begin{equation}
\label{eqn:Hadamard_k}
\frac{(-1)^k}{k!}\Big(\frac{L'}{L}(s,\pi\times\pi_0)\Big)^{(k)}=\sum_{\rho}\frac{1}{(s-\rho)^{k+1}}-\frac{\delta_{\pi\times\pi_0}}{(s-1)^{k+1}}-\frac{\delta_{\pi\times\pi_0}}{s^{k+1}}-\sum_{j=1}^{n_0 n [F:\Q]}\sum_{m=0}^{\infty}\frac{1}{(2m+s+\mu_{\pi\times\pi_0}(j))^{k+1}}.\hspace{-1mm}
\end{equation}
}%
\end{lemma}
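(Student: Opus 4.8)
The plan is to deduce both identities from the Hadamard factorization \eqref{eqn:hadamard} of $\Lambda(s,\pi\times\pi_0)$, from the shape \eqref{eqn:Lambdaspixpi'} of the completed $L$-function, and from the functional equation, exactly as one proves the usual ``explicit formula'' identities for a single $L$-function. Throughout, $\frac{L_\infty'}{L_\infty}(s,\pi\times\pi_0)$ denotes the logarithmic derivative of the completed archimedean factor $(D_F^{n_0 n}\N\kq_{\pi\times\pi_0})^{s/2}L(s,\pi_\infty\times\pi_{0,\infty})$. The first step is to take the logarithmic derivative of \eqref{eqn:hadamard}: since $\Lambda(s,\pi\times\pi_0)$ is entire of order $1$, the sum over the nontrivial zeros $\rho$ converges absolutely once $\tfrac{1}{s-\rho}$ is paired with $\tfrac{1}{\rho}$, giving $\tfrac{\Lambda'}{\Lambda}(s,\pi\times\pi_0)=b_{\pi\times\pi_0}+\sum_{\rho}\big(\tfrac{1}{s-\rho}+\tfrac{1}{\rho}\big)$. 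Taking the logarithmic derivative of \eqref{eqn:Lambdaspixpi'} instead (using $r_{\pi\times\pi_0}=\delta_{\pi\times\pi_0}$) and equating the two expressions yields
\[
\frac{L'}{L}(s,\pi\times\pi_0)+\frac{L_\infty'}{L_\infty}(s,\pi\times\pi_0)+\frac{\delta_{\pi\times\pi_0}}{s}+\frac{\delta_{\pi\times\pi_0}}{s-1}=b_{\pi\times\pi_0}+\sum_{\rho}\Big(\frac{1}{s-\rho}+\frac{1}{\rho}\Big),
\]
valid wherever $s$ is not a zero of $L(s,\pi\times\pi_0)$.

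To obtain \eqref{eqn:Hadamard_0}, I would take real parts of the displayed identity; the only nonmechanical ingredient is $\re b_{\pi\times\pi_0}=-\sum_{\rho}\re\tfrac{1}{\rho}$. This follows by combining the functional equation $\Lambda(s,\pi\times\pi_0)=W(\pi\times\pi_0)\Lambda(1-s,\tilde\pi\times\tilde\pi_0)$ with the reflection symmetry $\overline{\Lambda(\bar s,\pi\times\pi_0)}=\Lambda(s,\tilde\pi\times\tilde\pi_0)$ --- a consequence of $\overline{\lambda_{\pi\times\pi_0}(\n)}=\lambda_{\tilde\pi\times\tilde\pi_0}(\n)$, of $\kq_{\pi\times\pi_0}=\kq_{\tilde\pi\times\tilde\pi_0}$, and of the conjugation behavior of the remaining factors --- to obtain the self-dual identity $\Lambda(s,\pi\times\pi_0)=W(\pi\times\pi_0)\,\overline{\Lambda(1-\bar s,\pi\times\pi_0)}$. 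Comparing the two Hadamard products across this identity shows that the multiset of nontrivial zeros is invariant under $\rho\mapsto 1-\bar\rho$ and gives $2\re b_{\pi\times\pi_0}=-\sum_{\rho}\big(\tfrac{1}{\rho}+\overline{\tfrac{1}{\rho}}\big)$. Substituting this back cancels the $\sum_{\rho}\re\tfrac{1}{\rho}$ produced when taking real parts above and leaves \eqref{eqn:Hadamard_0}.

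For \eqref{eqn:Hadamard_k}, I would instead differentiate the displayed identity exactly $k\ge 1$ times in $s$, which annihilates the constant $b_{\pi\times\pi_0}$. After multiplying by $\tfrac{(-1)^k}{k!}$, the polar terms $\tfrac{\delta_{\pi\times\pi_0}}{s}$ and $\tfrac{\delta_{\pi\times\pi_0}}{s-1}$ become $-\tfrac{\delta_{\pi\times\pi_0}}{s^{k+1}}$ and $-\tfrac{\delta_{\pi\times\pi_0}}{(s-1)^{k+1}}$, while $\sum_{\rho}\tfrac{1}{s-\rho}$ becomes $\sum_{\rho}\tfrac{1}{(s-\rho)^{k+1}}$. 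For the archimedean term I would write $\frac{L_\infty'}{L_\infty}(s,\pi\times\pi_0)=\tfrac12\log(D_F^{n_0 n}\N\kq_{\pi\times\pi_0})+\sum_{j=1}^{n_0 n[F:\Q]}\tfrac{\Gamma_{\R}'}{\Gamma_{\R}}(s+\mu_{\pi\times\pi_0}(j))$ and use the digamma partial fraction $\tfrac{\Gamma'}{\Gamma}(z)=-\gamma+\sum_{m\ge 0}\big(\tfrac{1}{m+1}-\tfrac{1}{m+z}\big)$, which exhibits the simple poles of the archimedean factor at the trivial zeros $s=-2m-\mu_{\pi\times\pi_0}(j)$; differentiating $k$ times kills the constant and produces the double sum $\sum_{j=1}^{n_0 n[F:\Q]}\sum_{m\ge 0}\tfrac{1}{(2m+s+\mu_{\pi\times\pi_0}(j))^{k+1}}$. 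Collecting terms, with the usual bookkeeping of signs, gives \eqref{eqn:Hadamard_k}.

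I do not expect a serious obstacle: both parts are textbook manipulations in the style of \cite[Chapter 5]{IK}. The two points that genuinely need attention are the justification of the term-by-term differentiation and rearrangement of the conditionally convergent zero-sum --- which rests on the order-$1$ growth of $\Lambda(s,\pi\times\pi_0)$ and the classical bound for the number of its zeros in a bounded box --- and the identity $\re b_{\pi\times\pi_0}=-\sum_{\rho}\re\tfrac{1}{\rho}$ of the second paragraph, which is the one place where the functional equation enters the argument.
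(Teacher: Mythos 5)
Your proposal is correct and is, in substance, the same argument the paper relies on: the paper's ``proof'' of this lemma is a citation to Soundararajan--Thorner, and what you have written out is precisely the standard Hadamard-factorization derivation behind that citation (log-differentiate \eqref{eqn:hadamard}, equate with the log-derivative of \eqref{eqn:Lambdaspixpi'}, use the functional equation together with the conjugation symmetry $\overline{\Lambda(\bar s,\pi\times\pi_0)}=\Lambda(s,\tilde\pi\times\tilde\pi_0)$ to get $\re b_{\pi\times\pi_0}=-\sum_\rho\re(1/\rho)$, then take real parts for \eqref{eqn:Hadamard_0} and differentiate $k$ times for \eqref{eqn:Hadamard_k}). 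Your identification of the two genuinely delicate points (justifying the rearrangement of the conditionally convergent zero sum, and the $\re b_{\pi\times\pi_0}$ identity) is exactly right.

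One caution on the final step: if you actually carry out the ``usual bookkeeping of signs,'' you will find that the archimedean double sum enters \eqref{eqn:Hadamard_k} with a \emph{plus} sign, not the minus sign printed in the statement. Indeed, $2m+s+\mu_{\pi\times\pi_0}(j)=s-\rho_t$ for a trivial zero $\rho_t=-2m-\mu_{\pi\times\pi_0}(j)$, and zeros of $L$ must contribute to $\frac{(-1)^k}{k!}(L'/L)^{(k)}$ with the same sign as the nontrivial zeros (check $\zeta$ near $s=-2$: there $\zeta'/\zeta(s)=\tfrac{1}{s+2}+O(1)$, so $\frac{(-1)^k}{k!}(\zeta'/\zeta)^{(k)}(s)=+\tfrac{1}{(s+2)^{k+1}}+O(1)$, whereas the displayed formula would give $-\tfrac{1}{(s+2)^{k+1}}$). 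So do not contort your derivation to reproduce the printed minus sign; the discrepancy is a typo in the statement and is immaterial for how the lemma is used in \cref{sec:LFZDE}, where that term is only ever bounded in absolute value.
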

\begin{proof}
See \cite[(3.3) and Section 4]{soundararajan2019weak}, which are based on Hadamard factorizations.
\end{proof}

\begin{lemma}
\label{lem:Mertens}
If $(\pi,\pi_0)\in\mathfrak{F}_n\times\mathfrak{F}_{n_0}$ and $\eta>0$, then
\[
\sum_{\kn}\frac{|\Lambda_{\pi\times\pi_0}(\kn)|}{\N\kn^{1+\eta}}\leq \frac{1}{\eta}+\tilde{n}\log \tilde{Q}+O(1)\leq \frac{1}{\eta}+\frac{\mathcal{L}}{2}+O(1).
\]
\end{lemma}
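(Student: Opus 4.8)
The plan is to reduce the estimate to bounds for the logarithmic derivatives of the self-dual Rankin--Selberg $L$-functions $L(s,\pi\times\tilde\pi)$ and $L(s,\pi_0\times\tilde\pi_0)$ at $s=1+\eta$, and then obtain the latter from the Hadamard factorization. First I would dispose of the range $\eta\ge1$, where the left-hand side is $\ll 1$: the Dirichlet series $\sum_\kn|\Lambda_{\pi\times\pi_0}(\kn)|\N\kn^{-s}$ converges absolutely at $s=2$, uniformly, since its prime-power coefficients are $\ll\N\kp^{(\theta_n+\theta_{n_0})\ell}\log\N\kp$ with $\theta_n+\theta_{n_0}<1$ by \eqref{eqn:LRS_2}--\eqref{eqn:ramanujan_progress}. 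So we may assume $0<\eta<1$. Now apply the pointwise bound \eqref{eqn:Brumley}, i.e.\ $|\Lambda_{\pi\times\pi_0}(\kn)|\le\tfrac12\big(\Lambda_{\pi\times\tilde\pi}(\kn)+\Lambda_{\pi_0\times\tilde\pi_0}(\kn)\big)$ with $\Lambda_{\pi\times\tilde\pi}(\kn),\Lambda_{\pi_0\times\tilde\pi_0}(\kn)\ge0$; summing over $\kn$ at $s=1+\eta$, where both $-\tfrac{L'}{L}(s,\pi\times\tilde\pi)=\sum_\kn\Lambda_{\pi\times\tilde\pi}(\kn)\N\kn^{-s}$ and its $\pi_0$-analogue converge absolutely, gives
\[
\sum_\kn\frac{|\Lambda_{\pi\times\pi_0}(\kn)|}{\N\kn^{1+\eta}}\le-\frac12\frac{L'}{L}(1+\eta,\pi\times\tilde\pi)-\frac12\frac{L'}{L}(1+\eta,\pi_0\times\tilde\pi_0),
\]
so it suffices to prove $-\tfrac{L'}{L}(1+\eta,\pi\times\tilde\pi)\le\tfrac1\eta+n\log\mathfrak{C}_\pi+O(1)$ together with its $\pi_0$-analogue.

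For $L(s,\pi\times\tilde\pi)$ (the $\pi_0$-case being identical), take the logarithmic derivative of the Hadamard factorization \eqref{eqn:hadamard}, using the explicit shape \eqref{eqn:Lambdaspixpi'} with $r_{\pi\times\tilde\pi}=1$. The standard rearrangement (as underlies \cref{lem:log_deriv}, applied with $\pi_0$ replaced by $\tilde\pi$) gives, for real $\sigma\in(1,2)$,
\[
-\frac{L'}{L}(\sigma,\pi\times\tilde\pi)=\frac{1}{\sigma-1}+\frac1\sigma+\frac12\log(D_F^{n^2}\N\kq_{\pi\times\tilde\pi})+\re\frac{L_\infty'}{L_\infty}(\sigma,\pi\times\tilde\pi)-\sum_\rho\re\frac{1}{\sigma-\rho},
\]
where $\rho$ runs over the nontrivial zeros. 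Since $\sigma>1\ge\re\rho$, every term $\re\frac{1}{\sigma-\rho}$ is nonnegative, so we may drop that sum. It remains to bound the archimedean term: writing each factor through $\frac{\Gamma_\R'}{\Gamma_\R}(w)=-\tfrac12\log\pi+\tfrac12\psi(w/2)$ (and the corresponding identity at complex places), and using $\re\psi(z)\le\log(1+|z|)+O(1)$ whenever $\re z$ is bounded below by a positive constant, I need the arguments $\sigma+\mu_{j,j',\pi\times\tilde\pi}(v)$ to stay away from $0$. This is guaranteed by \eqref{eqn:LRS_2} and \eqref{eqn:ramanujan_progress}, which give $\re(\mu_{\pi\times\tilde\pi}(j))\ge-2\theta_n\ge-1+\tfrac{2}{n^2+1}$, so the relevant arguments have real part $\ge\tfrac{1}{n^2+1}>0$, a constant since $n$ is fixed.

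Tracking the normalizations of $\Gamma_\R$ and $\Gamma_{\mathbb{C}}$ against the exponents $d(v)$ in \eqref{eqn:analytic_conductor_def_2}, one obtains $\re\frac{L_\infty'}{L_\infty}(\sigma,\pi\times\tilde\pi)\le\tfrac12\log\big(\mathfrak{C}_{\pi\times\tilde\pi}\,/\,(D_F^{n^2}\N\kq_{\pi\times\tilde\pi})\big)+O(1)$ — i.e.\ half the log of the archimedean factor of the analytic conductor. Substituting, with $\sigma=1+\eta$, gives $-\tfrac{L'}{L}(1+\eta,\pi\times\tilde\pi)\le\tfrac1\eta+\tfrac12\log\mathfrak{C}_{\pi\times\tilde\pi}+O(1)$, and then \eqref{eqn:BH} with $D_F\ge1$ yields $\mathfrak{C}_{\pi\times\tilde\pi}\ll\mathfrak{C}_\pi^{2n}$, hence $-\tfrac{L'}{L}(1+\eta,\pi\times\tilde\pi)\le\tfrac1\eta+n\log\mathfrak{C}_\pi+O(1)$ as required. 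Combining the two bounds with $\mathfrak{C}_\pi\le Q\le\tilde Q$, $\mathfrak{C}_{\pi_0}\le\tilde Q$, and $n,n_0\le\tilde n$ gives $\sum_\kn\frac{|\Lambda_{\pi\times\pi_0}(\kn)|}{\N\kn^{1+\eta}}\le\tfrac1\eta+\tilde n\log\tilde Q+O(1)$; the second inequality then follows from $\tilde n\log\tilde Q\le4\tilde n^3\log\tilde Q\le4\tilde n^3\log\tilde Q+2[F:\Q]\tilde n^3\log T=\tfrac12\mathcal{L}$, using $\tilde n\ge1$, $\log\tilde Q\ge0$, $\log T\ge0$.

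The step I expect to be most delicate is the archimedean estimate. Besides keeping the $\Gamma$-arguments away from the pole at $0$ (handled, as noted, by the Luo--Rudnick--Sarnak-type bound \eqref{eqn:ramanujan_progress}--\eqref{eqn:LRS_2}, and harmless since $n,n_0$ are fixed), one must be careful enough with the real-versus-complex normalizations to extract the constant $\tfrac12$ in front of $\log\mathfrak{C}_{\pi\times\tilde\pi}$: a weaker constant $1$ there would only give $\sum_\kn\frac{|\Lambda_{\pi\times\pi_0}(\kn)|}{\N\kn^{1+\eta}}\le\tfrac1\eta+2\tilde n\log\tilde Q+O(1)$, which still suffices for the second inequality in the lemma but not for the first as stated.
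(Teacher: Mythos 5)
Your proof is correct and follows essentially the same route as the paper's: the paper reduces to the self-dual case via \eqref{eqn:Brumley} and then invokes \eqref{eqn:BH} together with the method of proof of \cite[Lemma 2.3]{soundararajan2019weak}, which is precisely the Hadamard-factorization argument you carry out (drop the nonnegative sum over zeros, bound the archimedean term by half the log of the archimedean part of $\mathfrak{C}_{\pi\times\tilde\pi}$). Your attention to the constant $\tfrac12$ in the archimedean estimate is indeed what makes the first inequality come out with $\tilde n\log\tilde Q$ rather than $2\tilde n\log\tilde Q$.
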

\begin{proof}
When $\pi_0=\tilde{\pi}$, this follows from \eqref{eqn:BH} and the proof of \cite[Lemma~2.3]{soundararajan2019weak}.  When $\pi_0\neq\tilde{\pi}$, this follows from the preceding case and $|\Lambda_{\pi\times\pi'}(\kn)|^2\leq \Lambda_{\pi\times\tilde{\pi}}(\kn)\Lambda_{\pi'\times\tilde{\pi}'}(\kn)$ \cite[Proposition A.1]{soundararajan2019weak}.
\end{proof}

\begin{lemma}
\label{lem:Linnik_lemma}
If $(\pi,\pi_0)\in\mathfrak{F}_n\times\mathfrak{F}_{n_0}$, $0<\eta\leq 1$, and $t\in\R$, then
\begin{equation}
\label{eqn:Linnik_intermediate}
\sum_{\rho}\frac{1+\eta-\beta}{|1+\eta+it-\rho|^2}\leq \frac{1}{\eta}+2\tilde{n}\log\tilde{Q}+\frac{n_0 n [F:\Q]}{2}\log(|t|+3)+O(1)\leq \frac{1}{\eta}+\mathcal{L}+O(1).
\end{equation}
Also, if $\sigma\geq 1$ and $n_{\pi\times\pi_0}(\eta,\sigma+it)=\#\{\rho\colon |\sigma+it-\rho|\leq\eta\}$, then
\[
n_{\pi\times\pi_0}(\eta,\sigma+it) \leq n_{\pi\times\pi_0}(\eta,1+it)\leq n_{\pi\times\pi_0}(2\eta,1+\eta+it)\leq 4\eta\mathcal{L}+O(1).
\]
\end{lemma}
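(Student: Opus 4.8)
The plan is to extract both displays from the Hadamard-type identity \eqref{eqn:Hadamard_0}, evaluated at the point $s_1 := 1+\eta+it$. First I would record that every nontrivial zero $\rho=\beta+i\gamma$ of $L(s,\pi\times\pi_0)$ has $\beta\le 1$: the Dirichlet series and Euler product of $L(s,\pi\times\pi_0)$ converge absolutely for $\Re(s)>1$, and $L(s,\pi\times\pi_0)$ is nonvanishing on the line $\Re(s)=1$. Consequently $\Re\frac{1}{s_1-\rho}=\frac{1+\eta-\beta}{|1+\eta+it-\rho|^2}\ge 0$ for every such $\rho$, so the left side of \eqref{eqn:Linnik_intermediate} is exactly $\sum_\rho\Re\frac{1}{s_1-\rho}$, which by \eqref{eqn:Hadamard_0} equals $\Re\big(\frac{L'}{L}(s_1,\pi\times\pi_0)+\frac{L_\infty'}{L_\infty}(s_1,\pi\times\pi_0)+\frac{\delta_{\pi\times\pi_0}}{s_1-1}+\frac{\delta_{\pi\times\pi_0}}{s_1}\big)$. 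It therefore suffices to bound these four terms from above.

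For the $\frac{L'}{L}$-term I would use that $-\frac{L'}{L}(s_1,\pi\times\pi_0)=\sum_{\kn}\Lambda_{\pi\times\pi_0}(\kn)\N\kn^{-s_1}$ converges absolutely on $\Re(s)=1+\eta$, whence $\Re\frac{L'}{L}(s_1,\pi\times\pi_0)\le\sum_{\kn}|\Lambda_{\pi\times\pi_0}(\kn)|\N\kn^{-(1+\eta)}\le\frac{1}{\eta}+\tilde{n}\log\tilde{Q}+O(1)$ by \cref{lem:Mertens}. For the archimedean term, write $\frac{L_\infty'}{L_\infty}(s_1,\pi\times\pi_0)=\sum_{j=1}^{n_0 n[F:\Q]}\frac{\Gamma_\R'}{\Gamma_\R}(s_1+\mu_{\pi\times\pi_0}(j))$; by \eqref{eqn:ramanujan_progress} and \eqref{eqn:LRS_2} each $\Re(s_1+\mu_{\pi\times\pi_0}(j))\ge 1-\theta_n-\theta_{n_0}$ is bounded below by a positive absolute constant, so Stirling's formula gives $\Re\frac{\Gamma_\R'}{\Gamma_\R}(w)\le\tfrac12\log(3+|w|)+O(1)$ uniformly there, and summing over $j$ while invoking the archimedean half of \eqref{eqn:BH} yields $\Re\frac{L_\infty'}{L_\infty}(s_1,\pi\times\pi_0)\le\tfrac12(n_0\log\mathfrak{C}_\pi+n\log\mathfrak{C}_{\pi_0})+\tfrac{n_0 n[F:\Q]}{2}\log(|t|+3)+O(1)\le\tilde{n}\log\tilde{Q}+\tfrac{n_0 n[F:\Q]}{2}\log(|t|+3)+O(1)$. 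Finally $\Re\frac{\delta_{\pi\times\pi_0}}{s_1}\le 1$ and $\Re\frac{\delta_{\pi\times\pi_0}}{s_1-1}=\delta_{\pi\times\pi_0}\tfrac{\eta}{\eta^2+t^2}\le\delta_{\pi\times\pi_0}/\eta$. If $\delta_{\pi\times\pi_0}=0$ (that is, $\pi_0\ne\tilde\pi$) these combine at once to give \eqref{eqn:Linnik_intermediate}, and the final inequality $\le\frac1\eta+\mathcal{L}+O(1)$ then follows from $|t|\le T$ together with $\tilde{n}^3\ge\tilde{n}\ge\max\{n,n_0\}$.

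The delicate point — and essentially the only step that is not bookkeeping — is the case $\delta_{\pi\times\pi_0}=1$ (i.e.\ $\pi_0=\tilde\pi$): estimating $\Re\frac{L'}{L}(s_1,\pi\times\tilde\pi)\le\frac1\eta+O(\tilde n\log\tilde Q)$ and $\Re\frac{1}{s_1-1}\le\frac1\eta$ separately would leave a spurious second $\frac1\eta$. One avoids this by grouping these two quantities before estimating: since the pole of $-\frac{L'}{L}(s,\pi\times\tilde\pi)$ at $s=1$ has residue $1$, the combination $\frac{L'}{L}(s,\pi\times\tilde\pi)+\frac{1}{s-1}$ is better behaved near $s=1$, and estimating $\Re\big(\frac{L'}{L}(s_1,\pi\times\tilde\pi)+\frac{1}{s_1-1}\big)$ directly from the Hadamard factorization of $\Lambda(s,\pi\times\tilde\pi)$ — exactly the manipulation used in the proof of the zero-free region in \cite[Section~4]{soundararajan2019weak} — keeps the coefficient of $\frac1\eta$ equal to $1$ and delivers \eqref{eqn:Linnik_intermediate} in this case too. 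I note in passing that for the second assertion of the lemma, and for all downstream applications, the precise constant multiplying $\frac1\eta$ in \eqref{eqn:Linnik_intermediate} is immaterial, since any $O(\frac1\eta)$ error contributes only $O(1)$ after being multiplied by $\eta$.

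Finally, the chain of inequalities for $n_{\pi\times\pi_0}(\eta,\sigma+it)$ follows from the definitions together with \eqref{eqn:Linnik_intermediate}. If $|\sigma+it-\rho|\le\eta$ with $\sigma\ge 1$, then $\Re\rho\le 1\le\sigma$ forces $(\sigma-\Re\rho)^2\ge(1-\Re\rho)^2$, so $|1+it-\rho|^2=(1-\Re\rho)^2+(t-\Im\rho)^2\le|\sigma+it-\rho|^2\le\eta^2$; hence $n_{\pi\times\pi_0}(\eta,\sigma+it)\le n_{\pi\times\pi_0}(\eta,1+it)$. The bound $n_{\pi\times\pi_0}(\eta,1+it)\le n_{\pi\times\pi_0}(2\eta,1+\eta+it)$ is the triangle inequality. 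For the last bound, every zero $\rho=\beta+i\gamma$ counted by $n_{\pi\times\pi_0}(2\eta,1+\eta+it)$ satisfies $1+\eta-\beta\ge\eta$ and $|1+\eta+it-\rho|\le 2\eta$, so its contribution to the left side of \eqref{eqn:Linnik_intermediate} is at least $\frac{\eta}{(2\eta)^2}=\frac{1}{4\eta}$; summing over such zeros and comparing with \eqref{eqn:Linnik_intermediate} gives $\frac{1}{4\eta}\,n_{\pi\times\pi_0}(2\eta,1+\eta+it)\le\frac1\eta+\mathcal{L}+O(1)$, i.e.\ $n_{\pi\times\pi_0}(2\eta,1+\eta+it)\le 4\eta\mathcal{L}+O(1)$, where the bound $\eta\le 1$ absorbs the resulting term $4\eta\cdot\eta^{-1}=4$ into the $O(1)$.
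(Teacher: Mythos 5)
Your proposal is correct and follows essentially the same route as the paper: evaluate \eqref{eqn:Hadamard_0} at $1+\eta+it$, bound the $\frac{L'}{L}$ term via \cref{lem:Mertens}, the archimedean term via Stirling and \eqref{eqn:BH}, and the pole terms trivially, then deduce the zero count from the pointwise lower bound $\frac{1+\eta-\beta}{|1+\eta+it-\rho|^2}\geq\frac{1}{4\eta}$ on the disk of radius $2\eta$. The one point where you diverge is the case $\pi_0=\tilde\pi$: the paper simply adds $\delta_{\pi\times\pi_0}(\frac{1}{\eta}+1)$ to the $\frac{1}{\eta}$ coming from \cref{lem:Mertens} (so, read literally, its proof yields $\frac{2}{\eta}$ rather than the stated $\frac{1}{\eta}$ in \eqref{eqn:Linnik_intermediate}), whereas you flag this and sketch a pole-cancellation fix; your fix is left somewhat schematic, but as you correctly observe the constant multiplying $\frac{1}{\eta}$ is immaterial both for the second assertion and for every downstream use, so either treatment suffices.
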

\begin{proof}
If $t\in\R$, $\sigma\geq 1$, and $0<\eta\leq 1$, then it follows from the geometry of complex numbers and the fact that $L(s,\pi\times\pi_0)\neq 0$ for $\re(s)\geq 1$ that
\[
n_{\pi\times\pi_0}(\eta,\sigma+it)\leq n_{\pi\times\pi_0}(\eta,1+it)\leq n_{\pi\times\pi_0}(2\eta,1+\eta+it)\leq 4\eta\sum_{\rho}\re\Big(\frac{1}{1+\eta+it-\rho}\Big).	
\]
Per \eqref{eqn:Hadamard_0}, the final sum (without the leading factor of $4\eta$) is
\[
\leq \frac{\log(D_F^{n_0 n}\N\kq_{\pi\times\pi_0})}{2}+\sum_{j=1}^{n_0 n [F:\Q]}\re\frac{\Gamma_{\R}'}{\Gamma_{\R}}\Big(\frac{1+\eta+it+\mu_{\pi\times\pi_0}(j)}{2}\Big)+\sum_{\kn}\frac{|\Lambda_{\pi\times\pi_0}(\kn)|}{\N\kn^{1+\eta}}+\delta_{\pi\times\pi_0}\Big(\frac{1}{\eta}+1\Big).
\]
The bound
\[
\frac{\log(D_F^{n_0 n}\N\kq_{\pi\times\pi_0})}{2}+\sum_{j=1}^{n_0 n [F:\Q]}\re\frac{\Gamma_{\R}'}{\Gamma_{\R}}\Big(\frac{1+\eta+it+\mu_{\pi\times\pi_0}(j)}{2}\Big)\leq \tilde{n}\log \tilde{Q}+\frac{n_0 n [F:\Q]}{2}\log T+O(1)
\]
follows from \cite[Lemma, p. 1203]{HIJTS} and \eqref{eqn:BH}.  The lemma now follows from \cref{lem:Mertens}.
\end{proof}

\subsection{Proof of \cref{thm:LFZDE}}

We proceed to detect the zeros of $L(s,\pi\times\pi_0)$ that satisfy the following hypothesis (which violates GRH).

\begin{hypothesis}
	\label{hyp}
There exists a nontrivial zero $\rho_0$ of $L(s,\pi\times\pi_0)$ such that $|1+i\tau-\rho_0|\leq \eta$.
\end{hypothesis}

Our main result en route to \cref{thm:LFZDE} is the following zero detection result.

\begin{theorem}
\label{thm:detect}
If \cref{hyp} is true, then
\[
1\ll e^{2.782471821 M_{\eta}}\Bigg(M_{\eta}\eta^3\int_{N_{\eta}}^{N_{\eta}^*}\Bigg|\sum_{N_{\eta}<\N\kn\leq u}\frac{\Lambda_{\pi\times\pi_0}(\kn)}{\N\kn^{1+i\tau}}\Bigg|^2\frac{du}{u}+\delta_{\pi\times\pi_0}\mathbf{1}_{[-A\eta,A\eta]}(\tau)\Bigg).
\]
\end{theorem}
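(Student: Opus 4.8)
The plan is to run the Turán power-sum zero-detection method at the point $s_0 = 1 + \eta + i\tau$, in the style of Soundararajan--Thorner \cite[Section 6]{soundararajan2019weak}: first one uses a high derivative of $\frac{L'}{L}(s,\pi\times\pi_0)$ to ``feel'' the hypothetical zero $\rho_0$, and then one converts the resulting lower bound for this derivative into a lower bound for a mean square of partial sums of $\Lambda_{\pi\times\pi_0}(\n)\N\n^{-1-i\tau}$.

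I would start from the Hadamard identity \eqref{eqn:Hadamard_k} of \cref{lem:log_deriv} at $s = s_0$. Under \cref{hyp} there is a nontrivial zero with $|s_0-\rho_0| \le |s_0-(1+i\tau)| + |(1+i\tau)-\rho_0| \le \eta + \eta = 2\eta$. Let $\rho_1,\dots,\rho_N$ list the nontrivial zeros in the disk $|s_0-\rho| \le V\eta$, ordered so that $|s_0-\rho_1|$ is smallest; then $|s_0-\rho_1|^{-1} \ge (2\eta)^{-1}$, and $N \le \mathcal{N}_\eta$ by \cref{lem:Linnik_lemma} and $V^2 < 8A$. Applying \cref{prop:Turan} with $M = M_\eta = (\alpha-1)\mathcal{N}_\eta$ to the numbers $(s_0-\rho_j)^{-1}$ produces an integer $k$ with $k+1 \in [M_\eta+1,M_\eta+N]$ (hence $k$ in the range fixed in \cref{subsec:notation_LFZDE}) such that
\[
\Big| \sum_{j=1}^{N} \frac{1}{(s_0-\rho_j)^{k+1}} \Big| \gg |s_0-\rho_1|^{-(k+1)} \big(4e(1+M_\eta/N)\big)^{-N} \ge (2\eta)^{-(k+1)}(4e\alpha)^{-\mathcal{N}_\eta},
\]
the last step using $1 + M_\eta/\mathcal{N}_\eta = \alpha$ and the fact that $N \mapsto (4e(1+M_\eta/N))^{-N}$ increases, so $N < \mathcal{N}_\eta$ only helps. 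Next I would show that every remaining term in \eqref{eqn:Hadamard_k} is at most half of this: the zeros with $|s_0-\rho| > V\eta$ contribute $\ll (V\eta)^{-(k-1)}\sum_\rho |s_0-\rho|^{-2} \ll (V\eta)^{-(k-1)}\eta^{-1}\mathcal{L}$ by \eqref{eqn:Linnik_intermediate} and $\eta^{-1} \ll \mathcal{L}$, and the inequality $V > 2(4e\alpha)^{1/(\alpha-1)}$ built into the definition of $V$ and \eqref{eqn:minimization_problem} makes this beat the Turán loss $(4e\alpha)^{\mathcal{N}_\eta}$ once $\mathcal{N}_\eta \ge \Cr{Linnik_err}$ with $\Cr{Linnik_err}$ large; the archimedean sum is controlled using \eqref{eqn:ramanujan_progress}, since every trivial zero is at distance $\ge \frac{1}{n^2+1}+\frac{1}{n_0^2+1}$ from $s_0$, which exceeds $V\eta$ thanks to $\eta \le \frac{1}{n_0 n R}$; and $\frac{\delta_{\pi\times\pi_0}}{s_0^{k+1}}$ has modulus $\le 1 \ll (2\eta)^{-(k+1)}(4e\alpha)^{-\mathcal{N}_\eta}$ because $\eta \le \frac1R < \frac{1}{2(4e\alpha)^{1/(\alpha-1)}}$. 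This yields
\[
\Big| \tfrac{(-1)^k}{k!} \big(\tfrac{L'}{L}\big)^{(k)}(s_0) \Big| + \frac{\delta_{\pi\times\pi_0}}{|s_0-1|^{k+1}} \gg (2\eta)^{-(k+1)}(4e\alpha)^{-\mathcal{N}_\eta}.
\]

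Now I would split into cases. If the pole term $\frac{\delta_{\pi\times\pi_0}}{|s_0-1|^{k+1}} = |\eta+i\tau|^{-(k+1)}$ carries at least half of this, then $|\eta+i\tau| \le 2\eta (4e\alpha)^{\mathcal{N}_\eta/(k+1)}\,O(1)^{1/k} \le R\eta$, using $\mathcal{N}_\eta/(k+1) \le \frac{1}{\alpha-1}$, $k$ large, and the (marginal) inequality $2(4e\alpha)^{1/(\alpha-1)} < R = \sqrt{A^2+1}$ coming from \eqref{eqn:minimization_problem}; hence $|\tau| \le \sqrt{(R\eta)^2-\eta^2} = \eta\sqrt{R^2-1} = A\eta$ and $\delta_{\pi\times\pi_0}=1$, so $\mathbf{1}_{[-A\eta,A\eta]}(\tau)\,\delta_{\pi\times\pi_0}=1$ and the claimed bound is immediate from $e^{2.782471821\,M_\eta} \gg 1$. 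Otherwise the logarithmic-derivative term dominates: expanding $\tfrac{(-1)^k}{k!}(\tfrac{L'}{L})^{(k)}(s_0) = \pm\sum_{\n} \frac{\Lambda_{\pi\times\pi_0}(\n)(\log\N\n)^k}{k!\,\N\n^{1+\eta+i\tau}}$ and discarding $\N\n \le N_\eta$ and $\N\n > N_\eta^*$ via \cref{lem:Mertens} (negligible because the weight $u\mapsto \frac{(\log u)^k}{k!\,u^\eta}$ peaks at $\log u = k/\eta$, while $A_0 = \tfrac{1}{eV} < 1 < \tfrac{\alpha}{\alpha-1} < A_1$), I obtain $\big|\sum_{N_\eta < \N\n \le N_\eta^*} \frac{\Lambda_{\pi\times\pi_0}(\n)(\log\N\n)^k}{k!\,\N\n^{1+\eta+i\tau}}\big| \gg (2\eta)^{-(k+1)}(4e\alpha)^{-\mathcal{N}_\eta}$. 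Writing $w(u) = \frac{(\log u)^k}{k!\,u^\eta}$ and $P(u) = \sum_{N_\eta < \N\n\le u} \Lambda_{\pi\times\pi_0}(\n)\N\n^{-1-i\tau}$, I would apply partial summation and then the Cauchy--Schwarz inequality to get
\[
(2\eta)^{-2(k+1)}(4e\alpha)^{-2\mathcal{N}_\eta} \ll \Big(\int_{N_\eta}^{N_\eta^*} |P(u)|^2\,\frac{du}{u}\Big)\Big(\int_{N_\eta}^{N_\eta^*} |w'(u)|^2 u\,du\Big),
\]
and finally bound the weight integral: the substitution $u = e^{v/\eta}$ turns it into $\frac{\eta^{1-2k}}{(k!)^2}\int_{A_0 M_\eta}^{A_1 M_\eta} v^{2k-2}(k-v)^2 e^{-2v}\,dv$, which by Stirling's formula and the optimality of $\alpha, A_0, A_1$ in \eqref{eqn:minimization_problem} is $\ll M_\eta\eta^3 (2\eta)^{2(k+1)}(4e\alpha)^{-2\mathcal{N}_\eta} e^{2.782471821\,M_\eta}$. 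Substituting and rearranging gives $1 \ll e^{2.782471821\,M_\eta}M_\eta\eta^3\int_{N_\eta}^{N_\eta^*}|P(u)|^2\,\tfrac{du}{u}$, as asserted.

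The real work is entirely in the precise bookkeeping of the tightly calibrated constants of \cref{subsec:notation_LFZDE}: one must verify simultaneously that (i) the disk of radius $V\eta$ contains at most $\mathcal{N}_\eta$ zeros yet is wide enough that the zeros outside it, the archimedean zeros, and the bounded term $s_0^{-(k+1)}$ all beat the Turán loss $(4e\alpha)^{\mathcal{N}_\eta}$ --- this is exactly $V > 2(4e\alpha)^{1/(\alpha-1)}$, together with $\eta \le \frac{1}{n_0 n R}$ and \eqref{eqn:ramanujan_progress} for the archimedean part; (ii) a dominant pole term forces $|\tau|\le A\eta$, which is the marginal inequality $2(4e\alpha)^{1/(\alpha-1)} < R$; and (iii) the weight integral produces the exponent $2.782471821$ and no more, which is what pins down the numerical values of $\alpha$, $A$, $A_0$, $A_1$ through the minimization \eqref{eqn:minimization_problem}. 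Getting all three to hold at once --- so that $\Cr{Linnik_err}$ can be chosen large enough, depending only on $n, n_0, [F:\Q]$ --- is the main obstacle; the rest is a routine contour and power-sum computation.
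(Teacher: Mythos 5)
Your proposal follows essentially the paper's argument: apply \eqref{eqn:Hadamard_k} at $s_0$, use the Kolesnik--Straus power-sum bound (\cref{prop:Turan}) on the zeros near $s_0$, bound the far zeros, the archimedean terms and the pole trivially, then pass from the $k$-th derivative to the mean square of the partial sums over $(N_\eta,N_\eta^*]$ by partial summation and Cauchy--Schwarz. Two of your tactical deviations deserve a warning, because the entire content of the theorem is the numerical exponent. (i) You dispose of the pole by a dominance dichotomy, deducing $|\tau|\le A\eta$ from $|\eta+i\tau|^{-(k+1)}\ge\tfrac12(2\eta)^{-(k+1)}(4e\alpha)^{-\mathcal{N}_\eta}$; this forces you to absorb the factors $2^{1/(k+1)}$ and $(4e\alpha)^{\mathcal{N}_\eta/(k+1)}$ into the inequality $2(4e\alpha)^{1/(\alpha-1)}<R$, whose relative margin by \eqref{eqn:minimization_problem} is only $10^{-8}$; a short computation shows that the loss $(\log 2)/(k+1)$ and the gain from $\mathcal{N}_\eta/(k+1)<1/(\alpha-1)$ nearly cancel, so it is genuinely unclear that your case analysis closes with this value of $A$. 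The paper sidesteps this entirely: it bounds $|s_0-1|^{-(k+1)}$ by $\eta^{-(k+1)}$ when $|\tau|\le A\eta$ (this trivial bound, after multiplying through by $\eta^{k+1}$, \emph{is} the term $\delta_{\pi\times\pi_0}\mathbf{1}_{[-A\eta,A\eta]}(\tau)$ in the statement) and by $(R\eta)^{-(k+1)}$ otherwise, with no dichotomy needed. (ii) Your Cauchy--Schwarz against $|w'(u)|^2u\,du$ requires an asymptotic evaluation of $\int v^{2k-2}(k-v)^2e^{-2v}\,dv$ to recover exactly $e^{2.782471821\,M_\eta}$, which you assert but do not verify; the paper instead uses the pointwise bound $|\tfrac{d}{du}j_k(\eta\log u)|\le\eta/u$ and Cauchy--Schwarz against the measure $du/u$, whose total mass $(A_1-A_0)M_\eta/\eta$ directly produces the factor $M_\eta\eta^3$, so the exponent comes solely from squaring the Tur\'an lower bound. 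Finally, a small slip: $N\mapsto(4e(1+M_\eta/N))^{-N}$ is decreasing, not increasing; what you need, and what is true, is that $(4e(1+M_\eta/N))^{N}$ increases in $N$, so $N\le\mathcal{N}_\eta$ gives $(4e(1+M_\eta/N))^{-N}\ge(4e\alpha)^{-\mathcal{N}_\eta}$.
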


\begin{proof}[Proof of \cref{thm:LFZDE} assuming \cref{thm:detect}]

In order to account for the zeros that lie on the circle $|1+i\tau-\rho|=\eta$, we replace each occurrence of $\eta$ with $\xi\eta$.  Therefore, if \cref{hyp} is true, then
\[
1\ll e^{2.782\,473 M_{\eta}}\Bigg(M_{\eta}\eta^3\int_{N_{\xi\eta}}^{N_{\xi\eta}^*}\Bigg|\sum_{N_{\xi\eta}<\N\kn\leq u}\frac{\Lambda_{\pi\times\pi_0}(\kn)}{\N\kn^{1+i\tau}}\Bigg|^2\frac{du}{u}+\delta_{\pi\times\pi_0}\mathbf{1}_{[-A\xi\eta,A\xi\eta]}(\tau)\Bigg).
\]
Let $\Phi_{\rho,w}(\tau)=1$ if $|1+i\tau-\rho|\leq w$, and $\Phi_{\rho,w}(\tau)=0$ otherwise.  We have that
\[
\Phi_{\rho_0,\xi\eta}(\tau)\ll e^{2.782\,473 M_{\eta}}\Bigg(M_{\eta}\eta^3\int_{N_{\xi\eta}}^{N_{\xi\eta}^*}\Bigg|\sum_{N_{\xi\eta}<\N\kn\leq u}\frac{\Lambda_{\pi\times\pi_0}(\kn)}{\N\kn^{1+i\tau}}\Bigg|^2\frac{du}{u}+\delta_{\pi\times\pi_0}\mathbf{1}_{[-A\xi\eta,A\xi\eta]}(\tau)\Bigg)\Phi_{\rho_0,\xi\eta}(\tau).
\]
Since \cref{hyp} implies that $\int_{-T}^T \Phi_{\rho_0,\xi\eta}(\tau)d\tau\geq \eta\sqrt{\xi^2-1}$, we can integrate both sides over $\tau\in[-T,T]$ and conclude that
\[
1\ll  e^{2.782\,473 M_{\eta}}\Bigg(M_{\eta}\eta^2\int_{-T}^{T}\int_{N_{\xi\eta}}^{N_{\xi\eta}^*}\Bigg|\sum_{N_{\xi\eta}<\N\kn\leq u}\frac{\Lambda_{\pi\times\pi_0}(\kn)}{\N\kn^{1+i\tau}}\Bigg|^2 \Phi_{\rho_0,\xi\eta}(\tau)\frac{du}{u}d\tau+\frac{\delta_{\pi\times\pi_0}}{\eta}\int_{-A\xi\eta}^{A\xi\eta}\Phi_{\rho_0,\xi\eta}(\tau)d\tau\Bigg).
\]

We now sum both sides over the zeros $\rho_0=\beta+i\gamma$ such that $1-\eta\leq \beta\leq 1$ and $|\gamma|\leq T$.  Since
\[
\sum_{\Lambda(\rho,\pi\times\pi_0)}\Phi_{\rho,\xi\eta}(\tau)=n_{\pi\times\pi_0}(\xi\eta,1+i\tau)\ll M_{\eta},
\]
we conclude that
\begin{align*}
N_{\pi\times\pi_0}(1-\eta,T)&\ll e^{2.782\,473 M_{\eta}}\Bigg(M_{\eta}^2 \eta^2\int_{-T}^{T}\int_{N_{\xi\eta}}^{N_{\xi\eta}^*}\Bigg|\sum_{N_{\xi\eta}<\N\kn\leq u}\frac{\Lambda_{\pi\times\pi_0}(\kn)}{\N\kn^{1+i\tau}}\Bigg|^2 \frac{du}{u}d\tau+\frac{\delta_{\pi\times\pi_0}}{\eta}M_{\eta}\int_{-A\xi\eta}^{A\xi\eta} d\tau\Bigg)\\
&\ll e^{2.782\,473 M_{\eta}}\Bigg(M_{\eta}^2 \eta^2\int_{-T}^{T}\int_{N_{\xi\eta}}^{N_{\xi\eta}^*}\Bigg|\sum_{N_{\xi\eta}<\N\kn\leq u}\frac{\Lambda_{\pi\times\pi_0}(\kn)}{\N\kn^{1+i\tau}}\Bigg|^2 \frac{du}{u}d\tau+\delta_{\pi\times\pi_0} M_{\eta}\Bigg).
\end{align*}
Summing over $\pi\in\mathcal{S}$ and swapping the order of integration, we arrive at
\begin{align*}
\sum_{\pi\in\mathcal{S}}N_{\pi\times\pi_0}(1-\eta,T)\ll e^{2.782\,473 M_{\eta}}\Bigg(M_{\eta}^2 \eta^2\int_{N_{\xi\eta}}^{N_{\xi\eta}^*}\sum_{\pi\in\mathcal{S}}\int_{-T}^{T}\Bigg|\sum_{N_{\xi\eta}<\N\kn\leq u}\frac{\Lambda_{\pi\times\pi_0}(\kn)}{\N\kn^{1+i\tau}}\Bigg|^2 d\tau\frac{du}{u}+M_{\eta}\Bigg).
\end{align*}
An application of Plancherel's theorem (as in the work of Gallagher \cite{Gallagher}) yields
\[
\sum_{\pi\in\mathcal{S}}\int_{-T}^{T}\Bigg|\sum_{N_{\xi\eta}<\N\kn\leq u}\frac{\Lambda_{\pi\times\pi_0}(\kn)}{\N\kn^{1+i\tau}}\Bigg|^2 d\tau\ll T^2 \int_0^{\infty}\sum_{\pi\in\mathcal{S}}\Bigg|\sum_{\substack{N_{\xi\eta}<\N\kn\leq u \\ x<\N\kn\leq xe^{1/T}}}\frac{\Lambda_{\pi\times\pi_0}(\kn)}{\N\kn}\Bigg|^2\frac{dx}{x}.
\]
An application of \cref{cor:ls-log-derivative} (with $a(\kn) = 1/\N\kn$ if $N_{\xi\eta}<\N\kn\leq u$ and $a(\kn)=0$ otherwise) yields
\[
T^2 \int_0^{\infty}\sum_{\pi\in\mathcal{S}}\Bigg|\sum_{\substack{N_{\xi\eta}<\N\kn\leq u \\ x<\N\kn\leq xe^{1/T}}}\frac{\Lambda_{\pi\times\pi_0}(\kn)}{\N\kn}\Bigg|^2\frac{dx}{x}\ll \int_{e^{-1/T}N_{\xi\eta}}^{u}\frac{dx}{x}\ll \log(e^{1/T}u/N_{\xi\eta}).
\]
Therefore, if $1/(R\mathcal{L})\leq\eta\leq 1/(R n_0 n)$, then
\begin{align*}
\sum_{\pi\in\mathcal{S}}N_{\pi\times\pi_0}(1-\eta,T)&\ll e^{2.782\,473 M_{\eta}}\Bigg(M_{\eta}^2 \eta^2\int_{N_{\xi\eta}}^{N_{\xi\eta}^*}\log(e^{1/T}u/N_{\xi\eta})\frac{du}{u}+M_{\eta}\Bigg)\\
&\ll e^{2.782\,473 M_{\eta}}(M_{\eta}^2 \eta^2(\log N_{\xi\eta})^2+M_{\eta})\ll e^{2.782\,473 M_{\eta}}M_{\eta}^4\ll (\tilde{Q}T^{[F:\Q]})^{543\tilde{n}^3 \eta}.
\end{align*}
Writing $\sigma=1-\eta$, we conclude that
\[
\sum_{\pi\in\mathcal{S}}N_{\pi\times\pi_0}(\sigma,T)\ll (\tilde{Q}T^{[F:\Q]})^{543\tilde{n}^3(1-\sigma)},\qquad 1-\frac{1}{Rn_0 n}\leq\sigma\leq 1-\frac{1}{R\mathcal{L}}.
\]
If $\sigma>1-1/(R\mathcal{L})$, then $\sum_{\pi\in\mathcal{S}}N_{\pi\times\pi_0}(\sigma,T)\leq \sum_{\pi\in\mathcal{S}}N_{\pi\times\pi_0}(1-\tfrac{1}{R\mathcal{L}},T)\ll 1$.  If $\sigma<1-1/(Rn_0 n)$, then the estimate is trivial compared to \cref{thm:ZDE}.
\end{proof}

\subsection{Lower bounds on high derivatives}

We apply  \eqref{eqn:Hadamard_k} to $s = s_0$.  Using \eqref{eqn:LRS_2}, we find that
\[
\frac{\delta_{\pi\times\pi_0}}{s_0^{k+1}}+\sum_{j=1}^{n_0 n [F:\Q]}\sum_{m=0}^{\infty}\frac{1}{(2m+s_0+\mu_{\pi\times\pi_0}(j))^{k+1}}\ll (n_0 n)^{k+1}\ll \frac{1}{(R\eta)^{k+1}}.
\]
Note that if $|1+i\tau-\rho|>A\eta$, then $|s_0-\rho|\geq R\eta$.  Therefore, by \eqref{eqn:Linnik_intermediate}, we have that
\[
\sum_{|1+i\tau-\rho|>A\eta}\frac{1}{|s_0-\rho|^{k+1}}\leq \frac{1}{(R\eta)^{k-1}}\sum_{\rho}\frac{1}{|s_0-\rho|^2}\leq\frac{R}{(R\eta)^{k}}\sum_{\rho}\re\Big(\frac{1}{s_0-\rho}\Big)\ll\frac{\eta\mathcal{L}}{(R\eta)^{k+1}}\ll \frac{\mathcal{N}_{\eta}}{(R\eta)^{k+1}}.
\]
If $\pi_0=\tilde{\pi}$, then we trivially estimate the contribution from the pole at $s=1$ by
\[
\Big|\frac{1}{(s_0-1)^{k+1}}\Big|\leq \begin{cases}
	\eta^{-k-1}&\mbox{if $|\tau|\leq A\eta$,}\\
	(R\eta)^{-k-1}&\mbox{if $|\tau|>A\eta$.}
\end{cases}
\]
Since $\mathcal{N}_{\eta}\gg 1$, we infer (upon multiplying through by $\eta^{k+1}$) that there exists an effectively computable constant $\Cl[abcon]{far_zeros}=\Cr{far_zeros}(n,n_0,[F:\Q])\geq 1.007$ such that
\begin{equation}
\label{eqn:lower_1}
\Big|\frac{\eta^{k+1}}{k!}\Big(\frac{L'}{L}\Big)^{(k)}(s_0,\pi\times\pi_0)\Big|+\delta_{\pi\times\pi_0}\mathbf{1}_{[-A\eta,A\eta]}(\tau)\geq \Bigg|\sum_{|1+i\tau-\rho|\leq A\eta}\frac{\eta^{k+1}}{(s_0-\rho)^{k+1}}\Bigg|-\Cr{far_zeros}\frac{\mathcal{N}_{\eta}}{R^{k+1}}.
\end{equation}

By \cref{hyp}, the sum over zeros in \eqref{eqn:lower_1} contains $\rho_0$.  Now, we may apply \cref{prop:Turan} to the sum over zeros in \eqref{eqn:lower_1} with $z_1 = \eta/(s_0-\rho)$, which, by \cref{hyp}, satisfies $|z_1|\geq\frac{1}{2}$.  By \cref{lem:Linnik_lemma}, there are $\leq \mathcal{N}_{\eta}$ zeros in this sum.  We choose $M=M_{\eta}$, in which case there exists an integer $k\in[M_{\eta}, M_{\eta}+\mathcal{N}_{\eta}-1]\subseteq [(\alpha-1)\mathcal{N}_{\eta},\alpha\mathcal{N}_{\eta}]$ such that the right-hand side of \eqref{eqn:lower_1} is
\begin{align*}
\geq \frac{1.007}{2^{k+1}(4e\alpha)^{\mathcal{N}_{\eta}}}-\Cr{far_zeros}\frac{\mathcal{N}_{\eta}}{R^{k+1}}&\geq 1.007\Big(\frac{1}{4e\alpha 2^{\alpha-1}}\Big)^{\mathcal{N}_{\eta}}\Big(1-\frac{\Cr{far_zeros}}{1.007}\Big(4e\alpha\Big(\frac{2}{\sqrt{A^2+1}}\Big)^{\alpha-1}\Big)^{\mathcal{N}_{\eta}}\mathcal{N}_{\eta}\Big)\\
&\geq 1.007\Big(\frac{1}{2(4e\alpha)^{1/(\alpha-1)}}\Big)^{M_{\eta}}\Big(1-\frac{\Cr{far_zeros}}{1.007}\mathcal{N}_{\eta}\Big(4e\alpha\Big(\frac{2}{\sqrt{A^2+1}}\Big)^{\alpha-1}\Big)^{\frac{\Cr{far_zeros}}{1.007}\mathcal{N}_{\eta}}\Big).
\end{align*}
Our choices of $\alpha$ and $A$ (see \eqref{eqn:minimization_problem} and the line that follows it) ensure that
\begin{align*}
\Big|\sum_{|1+i\tau-\rho|\leq A\eta}\frac{\eta^{k+1}}{(s_0-\rho)^{k+1}}\Big|-\Cr{far_zeros}\frac{\mathcal{N}_{\eta}}{R^{k+1}}
&\geq 1.007\Big(\frac{1}{4.019\,815\,115}\Big)^{M_{\eta}}\Big(1-\frac{\Cr{far_zeros}}{1.007}\mathcal{N}_{\eta}(1-10^{-8})^{\frac{\Cr{far_zeros}}{1.007}\mathcal{N}_{\eta}}\Big).
\end{align*}
Without loss of generality, we may assume that $\Cr{Linnik_err}\geq 10^{10}\Cr{far_zeros}$, which ensures that
\begin{equation}
\label{eqn:lower_bound_high_deriv}
\Bigg|\frac{\eta^{k+1}}{k!}\Big(\frac{L'}{L}\Big)^{(k)}(s_0,\pi\times\pi_0)\Bigg|+\delta_{\pi\times\pi_0}\mathbf{1}_{[-A\eta,A\eta]}(\tau)\geq \Big(\frac{1}{4.019\,815\,115}\Big)^{M_{\eta}}.
\end{equation}

\subsection{Upper bounds on high derivatives}

Let $k\in[M_{\eta},\frac{\alpha}{\alpha-1}M_{\eta}]$.  If $j_k(u)=e^{-u}u^k/k!$, then
\begin{equation}
\label{eqn:DirichletExpansion}
\Bigg|\frac{\eta^{k+1}}{k!}\Big(\frac{L'}{L}\Big)^{(k)}(s_0,\pi\times\pi_0)\Bigg|=\eta\Bigg|\sum_{\kn}\frac{\Lambda_{\pi\times\pi_0}(\kn)}{\N\kn^{1+\eta+i\tau}}\cdot\frac{(\eta\log\N\kn)^k}{k!}\Bigg|=\eta\Bigg|\sum_{\kn}\frac{\Lambda_{\pi\times\pi_0}(\kn)}{\N\kn^{1+i\tau}}j_k(\eta\log \N\kn)\Bigg|.
\end{equation}
We split the sum over $\kn$ into the ranges $\N\kn\in[N_{\eta},N_{\eta}^*]$ and $\N\kn\notin[N_{\eta},N_{\eta}^*]$.  We use partial summation in the former range and trivially handle the latter range, thus showing that \eqref{eqn:DirichletExpansion} is
\begin{equation}
\begin{aligned}
&\leq \eta\Big(\sum_{\N\kn\notin[N_{\eta},N_{\eta}^*]}\frac{|\Lambda_{\pi\times\pi_0}(\kn)|}{\N\kn}j_k(\eta\log\N\kn)+\sum_{\N\kn\in [N_{\eta},N_{\eta}^*]}\frac{|\Lambda_{\pi\times\pi_0}(\kn)|}{\N\kn}j_k(\eta\log N_{\eta}^*)\\
&+\int_{N_{\eta}}^{N_{\eta}^*}\Bigg|\frac{d}{du}j_k(\eta\log u)\Bigg|\cdot\Bigg|\sum_{\N\kn\in[N_{\eta},u]}\frac{\Lambda_{\pi\times\pi_0}(\kn)}{\N\kn^{1+i\tau}}\Bigg|du\Big).
\end{aligned}
\end{equation}

If $\N\kn\leq \mathcal{N}_{\eta}$, then $\eta\log\N\kn\leq A_0 M_{\eta}$.  Since $k\in[M_{\eta},\frac{\alpha}{\alpha-1}M_{\eta}]$ and $k!\geq (k/e)^k$, we observe that
\[
j_k(\eta\log\N\kn)=\frac{\N\kn^{-\eta}(\eta\log\N\kn)^k}{k!}\leq\N\kn^{-\eta}\Big(\frac{e\eta\log\N\kn}{k}\Big)^k\leq \N\kn^{-\eta}\Big(\frac{A_0 e M_{\eta}}{k}\Big)^k \leq \N\kn^{-\eta}V^{-k}.
\]
If $\N\kn\geq N_{\eta}^*$, then $\eta\log\N\kn\geq A_1 M_{\eta}$.  Since $e^{-u/2}u^k/k!$ is decreasing in the range $u>2k$, we find that if $\N\kn\geq N_1$, then
\[
j_k(\eta\log\N\kn)=\N\kn^{-\frac{\eta}{2}}\frac{e^{-\frac{1}{2}\eta\log\N\kn}(\eta\log\N\kn)^k}{k!}\leq \N\kn^{-\frac{\eta}{2}}\frac{e^{-\frac{A_1}{2}M_{\eta}}(A_1 M_{\eta})^k}{k!}\leq \N\kn^{-\frac{\eta}{2}}V^{-k}.
\]
Using these two bounds on $j_k(\eta\log\N\kn)$, we conclude via \cref{lem:Mertens} that
\[
\eta\Bigg|\sum_{\N\kn\notin[N_{\eta},N_{\eta}^*]}\frac{\Lambda_{\pi\times\pi_0}}{\N\kn^{1+i\tau}}j_k(\eta\log\N\kn)\Bigg|\leq \frac{\eta}{V^k}\sum_{\kn}|\Lambda_{\pi\times\pi_0}(\kn)|\Big(\frac{1}{\N\kn^{1+\eta}}+\frac{1}{\N\kn^{1+\eta/2}}\Big)\ll\frac{k}{V^k}.
\]
Now, using $|\frac{d}{du}(j_k(\eta\log u))|=|j_{k-1}(\eta\log u)-j_k(\eta\log u)|\leq \eta/u$, we infer that
\[
\eta\int_{N_{\eta}}^{N_{\eta}^*}\Big|\frac{d}{du}j_k(\eta\log u)\Big|\cdot\Bigg|\sum_{\N\kn\in[N_{\eta},u]}\frac{\Lambda_{\pi\times\pi_0}(\kn)}{\N\kn^{1+i\tau}}\Bigg|du\leq \eta^2 \int_{N_{\eta}}^{N_{\eta}^*}\Bigg|\sum_{\N\kn\in[N_{\eta},u]}\frac{\Lambda_{\pi\times\pi_0}(\kn)}{\N\kn^{1+i\tau}}\Bigg|\frac{du}{u}.
\]
In conclusion, there exists an effectively computable constant $\Cl[abcon]{Dirichlet_upper}=\Cr{Dirichlet_upper}(n,n_0,[F:\Q])\geq 2$ such that
\begin{equation}
\label{eqn:upper_bound_high_deriv}
\Bigg|\frac{\eta^{k+1}}{k!}\Big(\frac{L'}{L}\Big)^{(k)}(s_0,\pi\times\pi_0)\Bigg|\leq \eta^2\int_{N_{\eta}}^{N_{\eta}^*}\Bigg|\sum_{N_{\eta}<\N\kn\leq u}\frac{\Lambda_{\pi\times\pi_0}(\kn)}{\N\kn^{1+i\tau}}\Bigg|\frac{du}{u}+\Cr{Dirichlet_upper}\frac{k}{V^k}.
\end{equation}

\subsection{Proof of \cref{thm:detect}}

It follows from \eqref{eqn:lower_bound_high_deriv} and \eqref{eqn:upper_bound_high_deriv} that
\[
\Big(\frac{1}{4.019\,815\,115}\Big)^{M_{\eta}}\leq \eta^2\int_{N_{\eta}}^{N_{\eta}^*}\Bigg|\sum_{N_{\eta}<\N\kn\leq u}\frac{\Lambda_{\pi\times\pi_0}(\kn)}{\N\kn^{1+i\tau}}\Bigg|\frac{du}{u}+\Cr{Dirichlet_upper}\frac{k}{V^k}+\delta_{\pi\times\pi_0}\mathbf{1}_{[-A\eta,A\eta]}(\tau)
\]
Since $k\geq M_{\eta}$, it follows (upon inflating $\Cr{Linnik_err}$ to satisfy $\Cr{Linnik_err}\geq 10^{10}\Cr{Dirichlet_upper}$, if needed) that
\[
\frac{1}{2}\Big(\frac{1}{4.019\,815\,115}\Big)^{M_{\eta}}\leq \eta^2\int_{N_{\eta}}^{N_{\eta}^*}\Bigg|\sum_{N_{\eta}<\N\kn\leq u}\frac{\Lambda_{\pi\times\pi_0}(\kn)}{\N\kn^{1+i\tau}}\Bigg|\frac{du}{u}+\delta_{\pi\times\pi_0}\mathbf{1}_{[-A\eta,A\eta]}(\tau),
\]
hence
\[
1\ll e^{1.391235911 M_{\eta}}\Bigg(\eta^2\int_{N_{\eta}}^{N_{\eta}^*}\Bigg|\sum_{N_{\eta}<\N\kn\leq u}\frac{\Lambda_{\pi\times\pi_0}(\kn)}{\N\kn^{1+i\tau}}\Bigg|\frac{du}{u}+\delta_{\pi\times\pi_0}\mathbf{1}_{[-A\eta,A\eta]}(\tau)\Bigg).
\]
We square both sides and apply the Cauchy--Schwarz inequality, thus obtaining
\begin{align*}
1&\ll e^{2.782471821 M_{\eta}}\Bigg(\eta^4\Bigg(\int_{N_{\eta}}^{N_{\eta}^*}\Bigg|\sum_{N_{\eta}<\N\kn\leq u}\frac{\Lambda_{\pi\times\pi_0}(\kn)}{\N\kn^{1+i\tau}}\Bigg|\frac{du}{u}\Bigg)^2+\delta_{\pi\times\pi_0}\mathbf{1}_{[-A\eta,A\eta]}(\tau)\Bigg)\\
&\ll e^{2.782471821 M_{\eta}}\Bigg(\eta^4\Bigg(\int_{N_{\eta}}^{N_{\eta}^*}\frac{du}{u}\Bigg)\int_{N_{\eta}}^{N_{\eta}^*}\Bigg|\sum_{N_{\eta}<\N\kn\leq u}\frac{\Lambda_{\pi\times\pi_0}(\kn)}{\N\kn^{1+i\tau}}\Bigg|^2\frac{du}{u}+\delta_{\pi\times\pi_0}\mathbf{1}_{[-A\eta,A\eta]}(\tau)\Bigg)\\
&\ll e^{2.782471821 M_{\eta}}\Bigg(M_{\eta}\eta^3\int_{N_{\eta}}^{N_{\eta}^*}\Bigg|\sum_{N_{\eta}<\N\kn\leq u}\frac{\Lambda_{\pi\times\pi_0}(\kn)}{\N\kn^{1+i\tau}}\Bigg|^2\frac{du}{u}+\delta_{\pi\times\pi_0}\mathbf{1}_{[-A\eta,A\eta]}(\tau)\Bigg).
\end{align*}

\bibliographystyle{abbrv}
\bibliography{PascadiThorner_LargeSieve}

\def\cprime{$'$}
\begin{thebibliography}{10}

\bibitem{blomer2023density}
V.~Blomer.
\newblock Density theorems for {${\rm GL}(n)$}.
\newblock {\em Invent. Math.}, 232(2):683--711, 2023.

\bibitem{blomer2011ramanujan}
V.~Blomer and F.~Brumley.
\newblock On the {R}amanujan conjecture over number fields.
\newblock {\em Ann. of Math. (2)}, 174(1):581--605, 2011.

\bibitem{BlomerThorner}
V.~{Blomer} and J.~{Thorner}.
\newblock {Zeros of $L$-functions in families near the critical line}.
\newblock {\em arXiv e-prints}, page arXiv:2410.17158, Oct. 2024.

\bibitem{Brumley}
F.~Brumley.
\newblock Effective multiplicity one on {${\rm GL}_N$} and narrow zero-free
  regions for {R}ankin-{S}elberg {$L$}-functions.
\newblock {\em Amer. J. Math.}, 128(6):1455--1474, 2006.

\bibitem{Brumley_2}
F.~Brumley.
\newblock Second order average estimates on local data of cusp forms.
\newblock {\em Arch. Math. (Basel)}, 87(1):19--32, 2006.

\bibitem{BM}
F.~Brumley and D.~Mili{\'c}evi{\'c}.
\newblock Counting cusp forms by analytic conductor.
\newblock {\em Ann. Sci. \'Ec. Norm. Sup\'er. (4)}, 57:1473--1597, 2024.

\bibitem{BrumleyThornerZaman}
F.~Brumley, J.~Thorner, and A.~Zaman.
\newblock Zeros of {R}ankin-{S}elberg {$L$}-functions at the edge of the
  critical strip.
\newblock {\em J. Eur. Math. Soc. (JEMS)}, 24(5):1471--1541, 2022.
\newblock With an appendix by Colin J. Bushnell and Guy Henniart.

\bibitem{bump1998automorphic}
D.~Bump.
\newblock {\em Automorphic forms and representations}, volume~55 of {\em
  Cambridge Studies in Advanced Mathematics}.
\newblock Cambridge University Press, Cambridge, 1997.

\bibitem{BH}
C.~J. Bushnell and G.~Henniart.
\newblock An upper bound on conductors for pairs.
\newblock {\em J. Number Theory}, 65(2):183--196, 1997.

\bibitem{deshouillers1982kloosterman}
J.-M. Deshouillers and H.~Iwaniec.
\newblock Kloosterman sums and {F}ourier coefficients of cusp forms.
\newblock {\em Invent. Math.}, 70(2):219--288, 1982.

\bibitem{DK}
W.~Duke and E.~Kowalski.
\newblock A problem of {L}innik for elliptic curves and mean-value estimates
  for automorphic representations.
\newblock {\em Invent. Math.}, 139(1):1--39, 2000.
\newblock With an appendix by Dinakar Ramakrishnan.

\bibitem{lichtman2024density}
J.~{Duker Lichtman} and A.~{Pascadi}.
\newblock {Density theorems for $\text{GL}_n$ via Rankin-Selberg
  $L$-functions}.
\newblock {\em arXiv e-prints}, page arXiv:2408.13682, Aug. 2024.

\bibitem{DunnRadziwill}
A.~Dunn and M.~Radziwi\l\l.
\newblock Bias in cubic {G}auss sums: {P}atterson's conjecture.
\newblock {\em Ann. of Math. (2)}, 200(3):967--1057, 2024.

\bibitem{FI}
J.~Friedlander and H.~Iwaniec.
\newblock The polynomial {$X^2+Y^4$} captures its primes.
\newblock {\em Ann. of Math. (2)}, 148(3):945--1040, 1998.

\bibitem{friedlander2010opera}
J.~Friedlander and H.~Iwaniec.
\newblock {\em Opera de cribro}, volume~57 of {\em American Mathematical
  Society Colloquium Publications}.
\newblock American Mathematical Society, Providence, RI, 2010.

\bibitem{Gallagher}
P.~X. Gallagher.
\newblock A large sieve density estimate near {$\sigma =1$}.
\newblock {\em Invent. Math.}, 11:329--339, 1970.

\bibitem{goldfeld2011automorphic}
D.~Goldfeld and J.~Hundley.
\newblock {\em {Automorphic Representations and L-Functions for the General
  Linear Group: Volume 1}}, volume 129.
\newblock Cambridge University Press, 2011.

\bibitem{HarcosAFE1}
G.~Harcos.
\newblock Uniform approximate functional equation for principal
  {$L$}-functions.
\newblock {\em Int. Math. Res. Not.}, (18):923--932, 2002.

\bibitem{HarcosAFE2}
G.~Harcos.
\newblock Erratum to: ``{U}niform approximate functional equation for principal
  {$L$}-functions [{I}nt. {M}ath. {R}es. {N}ot. {\bf 2002}, no. 18, 923--932;
  mr1902296].
\newblock {\em Int. Math. Res. Not.}, (13):659--660, 2004.

\bibitem{HarcosThorner}
G.~Harcos and J.~Thorner.
\newblock A new zero-free region for {R}ankin--{S}elberg {L}-functions.
\newblock {\em J. Reine Angew. Math.}, 822:179--201, 2025.

\bibitem{HB_real}
D.~R. Heath-Brown.
\newblock A mean value estimate for real character sums.
\newblock {\em Acta Arith.}, 72(3):235--275, 1995.

\bibitem{HIJTS}
A.~Hoey, J.~Iskander, S.~Jin, and F.~Trejos~Su\'{a}rez.
\newblock An unconditional explicit bound on the error term in the
  {S}ato-{T}ate conjecture.
\newblock {\em Q. J. Math.}, 73(4):1189--1225, 2022.

\bibitem{Hoheisel}
G.~Hoheisel.
\newblock Primzahl probleme in der {A}nalysis.
\newblock {\em S.-B. Preuss. Akad. Wiss.}, 8:580--588, 1930.

\bibitem{humphries2019standard}
P.~Humphries and F.~Brumley.
\newblock Standard zero-free regions for {R}ankin-{S}elberg {$L$}-functions via
  sieve theory.
\newblock {\em Math. Z.}, 292(3-4):1105--1122, 2019.

\bibitem{HT_ZFR}
P.~Humphries and J.~Thorner.
\newblock Towards a {${\rm GL}_n$} variant of the {H}oheisel phenomenon.
\newblock {\em Trans. Amer. Math. Soc.}, 375(3):1801--1824, 2022.

\bibitem{humphries2024zeros}
P.~Humphries and J.~Thorner.
\newblock Zeros of {R}ankin-{S}elberg {$L$}-functions in families.
\newblock {\em Compos. Math.}, 160(5):1041--1072, 2024.

\bibitem{IK}
H.~Iwaniec and E.~Kowalski.
\newblock {\em Analytic number theory}, volume~53 of {\em American Mathematical
  Society Colloquium Publications}.
\newblock American Mathematical Society, Providence, RI, 2004.

\bibitem{IwaniecLi}
H.~Iwaniec and X.~Li.
\newblock The orthogonality of {H}ecke eigenvalues.
\newblock {\em Compos. Math.}, 143(3):541--565, 2007.

\bibitem{IS}
H.~Iwaniec and P.~Sarnak.
\newblock Perspectives on the analytic theory of {$L$}-functions.
\newblock {\em Geom. Funct. Anal.}, (Special Volume, Part II):705--741, 2000.
\newblock GAFA 2000 (Tel Aviv, 1999).

\bibitem{JPSS}
H.~Jacquet, I.~I. Piatetskii-Shapiro, and J.~A. Shalika.
\newblock Rankin-{S}elberg convolutions.
\newblock {\em Amer. J. Math.}, 105(2):367--464, 1983.

\bibitem{jana2020applications}
S.~Jana.
\newblock Applications of analytic newvectors for {$\text{GL}(n)$}.
\newblock {\em Math. Ann.}, 380(3-4):915--952, 2021.

\bibitem{Jiang}
Y.~{Jiang}.
\newblock {On Hypothesis H of Rudnick and Sarnak}.
\newblock {\em arXiv e-prints}, page arXiv:2507.20653, July 2025.

\bibitem{JLW}
Y.~Jiang, G.~L\"{u}, and Z.~Wang.
\newblock Exponential sums with multiplicative coefficients without the
  {R}amanujan conjecture.
\newblock {\em Math. Ann.}, 379(1-2):589--632, 2021.

\bibitem{KanekoThorner}
I.~Kaneko and J.~Thorner.
\newblock Highly uniform prime number theorems.
\newblock {\em Ann. Inst. Fourier (Grenoble)}, 2025.
\newblock Accepted for publication.

\bibitem{KolesnikStraus}
G.~Kolesnik and E.~G. Straus.
\newblock On the sum of powers of complex numbers.
\newblock In {\em Studies in pure mathematics}, pages 427--442. Birkh\"{a}user,
  Basel, 1983.

\bibitem{KM}
E.~Kowalski and P.~Michel.
\newblock Zeros of families of automorphic {$L$}-functions close to 1.
\newblock {\em Pacific J. Math.}, 207(2):411--431, 2002.

\bibitem{LOSmith}
R.~J. {Lemke Oliver} and A.~{Smith}.
\newblock {Faithful Artin induction and the Chebotarev density theorem}.
\newblock {\em arXiv e-prints}, page arXiv:2405.08383, May 2024.

\bibitem{Linnik}
U.~V. Linnik.
\newblock On the least prime in an arithmetic progression.
\newblock {\em Rec. Math. [Mat. Sbornik] N.S.}, 15(57):139--178,347--368, 1944.

\bibitem{luo1999generalized}
W.~Luo, Z.~Rudnick, and P.~Sarnak.
\newblock {On the generalized Ramanujan conjecture for GL(n)}.
\newblock In {\em Proceedings of Symposia in Pure Mathematics}, volume~66,
  pages 301--310, Providence, RI, 1999. American Mathematical Society.

\bibitem{LRS}
W.~Luo, Z.~Rudnick, and P.~Sarnak.
\newblock On the generalized {R}amanujan conjecture for {${\rm GL}(n)$}.
\newblock In {\em Automorphic forms, automorphic representations, and
  arithmetic ({F}ort {W}orth, {TX}, 1996)}, volume~66 of {\em Proc. Sympos.
  Pure Math.}, pages 301--310. Amer. Math. Soc., Providence, RI, 1999.

\bibitem{Makai}
E.~Makai.
\newblock On a minimum problem. {II}.
\newblock {\em Acta Math. Acad. Sci. Hungar.}, 15:63--66, 1964.

\bibitem{moeglin1989spectre}
C.~M\oe~glin and J.-L. Waldspurger.
\newblock Le spectre r\'{e}siduel de {${\rm GL}(n)$}.
\newblock {\em Ann. Sci. \'{E}cole Norm. Sup. (4)}, 22(4):605--674, 1989.

\bibitem{Monty}
H.~L. Montgomery.
\newblock Zeros of {$L$}-functions.
\newblock {\em Invent. Math.}, 8:346--354, 1969.

\bibitem{Motohashi}
Y.~Motohashi.
\newblock On sums of {H}ecke-{M}aass eigenvalues squared over primes in short
  intervals.
\newblock {\em J. Lond. Math. Soc. (2)}, 91(2):367--382, 2015.

\bibitem{MS}
W.~M{\"u}ller and B.~Speh.
\newblock Absolute convergence of the spectral side of the {A}rthur trace
  formula for {${\rm GL}_n$}.
\newblock {\em Geom. Funct. Anal.}, 14(1):58--93, 2004.
\newblock With an appendix by E. M. Lapid.

\bibitem{Nelson}
P.~D. {Nelson}.
\newblock {Bounds for standard $L$-functions}.
\newblock {\em arXiv e-prints}, page arXiv:2109.15230, Sept. 2021.

\bibitem{shahidi1981certain}
F.~Shahidi.
\newblock On certain {$L$}-functions.
\newblock {\em Amer. J. Math.}, 103(2):297--355, 1981.

\bibitem{SiuSound}
H.~C. Siu and K.~Soundararajan.
\newblock Restriction of {H}ecke eigenforms to horocycles.
\newblock In {\em L-functions and automorphic forms}, volume~10 of {\em
  Contrib. Math. Comput. Sci.}, pages 265--273. Springer, Cham, 2017.

\bibitem{SosTuran}
V.~T. S\'{o}s and P.~Tur\'{a}n.
\newblock On some new theorems in the theory of {D}iophantine approximations.
\newblock {\em Acta Math. Acad. Sci. Hungar.}, 6:241--255, 1955.

\bibitem{soundararajan2019weak}
K.~Soundararajan and J.~Thorner.
\newblock Weak subconvexity without a {R}amanujan hypothesis.
\newblock {\em Duke Math. J.}, 168(7):1231--1268, 2019.
\newblock With an appendix by Farrell Brumley.

\bibitem{TZ3}
J.~Thorner and A.~Zaman.
\newblock A unified and improved {C}hebotarev density theorem.
\newblock {\em Algebra Number Theory}, 13(5):1039--1068, 2019.

\bibitem{TZ_GLn}
J.~Thorner and A.~Zaman.
\newblock An unconditional {${\rm GL}_n$} large sieve.
\newblock {\em Adv. Math.}, 378:Paper No. 107529, 24, 2021.

\bibitem{TZ_Sarkozy}
J.~Thorner and A.~Zaman.
\newblock An explicit version of {B}ombieri's log-free density estimate and
  {S}\'{a}rk\"{o}zy's theorem for shifted primes.
\newblock {\em Forum Math.}, 36(4):1059--1080, 2024.

\bibitem{ThornerZhuo}
J.~{Thorner} and Z.~{Zhang}.
\newblock {A uniform Chebotarev density theorem with Artin's holomorphy
  conjecture}.
\newblock {\em arXiv e-prints}, page arXiv:2412.01802, Dec. 2024.

\bibitem{Wattanawanichkul}
N.~{Wattanawanichkul}.
\newblock {A metric approach to zero-free regions for $L$-functions}.
\newblock {\em arXiv e-prints}, page arXiv:2504.05606, Apr. 2025.

\bibitem{Weiss}
A.~Weiss.
\newblock The least prime ideal.
\newblock {\em J. Reine Angew. Math.}, 338:56--94, 1983.

\bibitem{Yang}
L.~{Yang}.
\newblock {Relative Trace Formula, Subconvexity and Quantitative Nonvanishing
  of Rankin-Selberg $L$-functions for $\mathrm{GL}(n+1)\times\mathrm{GL}(n)$}.
\newblock {\em arXiv e-prints}, page arXiv:2309.07534, Sept. 2023.

\bibitem{Zaman_Thesis}
A.~A. Zaman.
\newblock {\em Analytic estimates for the {C}hebotarev {D}ensity {T}heorem and
  their applications}.
\newblock ProQuest LLC, Ann Arbor, MI, 2017.
\newblock Thesis (Ph.D.)--University of Toronto (Canada).

\end{thebibliography}
\end{document}